\begin{document}
\newtheorem{theo}{Theorem}[section]
\newtheorem{defi}[theo]{Definition}
\newtheorem{lemm}[theo]{Lemma}
\newtheorem{prop}[theo]{Proposition}
\newtheorem{rem}[theo]{Remark}
\newtheorem{exam}[theo]{Example}
\newtheorem{cor}[theo]{Corollary}
\newcommand{\mat}[4]{
    \begin{pmatrix}
           #1 & #2 \\
           #3 & #4
      \end{pmatrix}
   }
\def\Z{\mathbb{Z}} 
\def\R{\mathcal{R}} 
\def\I{\mathcal{I}} 
\def\C{\mathbb{C}} 
\def\N{\mathbb{N}} 
\def\PP{\mathbb{P}} 
\def\Q{\mathbb{Q}} 
\def\L{\mathcal{L}} 
\def\ol{\overline} 
\def\bs{\backslash} 
\def\part{P} 

\newcommand{\gcD}{\mathrm {\ gcd}} 
\newcommand{\End}{\mathrm {End}} 
\newcommand{\Aut}{\mathrm {Aut}} 
\newcommand{\GL}{\mathrm {GL}} 
\newcommand{\SL}{\mathrm {SL}} 
\newcommand{\PSL}{\mathrm {PSL}} 
\newcommand{\Mat}{\mathrm {Mat}} 
\newcommand\ga[1]{\overline{\Gamma}_0(#1)} 
\newcommand\pro[1]{\mathbb{P}1(\mathbb{Z}_{#1})} 
\newcommand\Zn[1]{\mathbb{Z}_{#1}}
\newcommand\equi[1]{\stackrel{#1}{\equiv}}
\newcommand\pai[2]{[#1:#2]} 
\newcommand\modulo[2]{[#1]_#2} 
\newcommand\sah[1]{\lceil#1\rceil} 
\def\sol{\phi} 
\begin{center}
{\LARGE\bf Renormalization of circle diffeomorphisms
with a break-type  singularity}
\footnote{MSC2000:  37C15,
37C40, 37E10, 37F25.
Keywords and phrases: circle diffeomorphism, break point, rotation
number, renormalization, fractional-linear maps, convergence.}
\\
\vspace{.25in} { \large{Habibulla Akhadkulov\footnote{School of Quantitative Sciences, University Utara Malaysia, CAS 06010, UUM  Sintok, Kedah Darul Aman, Malaysia.  \quad E-mail: akhadkulov@yahoo.com },
Mohd Salmi Md Noorani\footnote{School of Mathematical Sciences Faculty of Science and Technology University Kebangsaan Malaysia, 43600 UKM Bangi, Selangor Darul Ehsan, Malaysia.  \quad E-mail:  msn@ukm.edu.my (M.S. Noorani), akhatkulov@yahoo.com (S. Akhatkulov).}
   and}
Sokhobiddin Akhatkulov$^{3}$}

\end{center}

\title{Renormalization of circle diffeomorphisms
with a break-type  singularity}

\begin{abstract}
Let $f$ be an orientation-preserving circle diffeomorphism with irrational rotation number and with a break point
$\xi_{0},$ that is, its derivative $f'$ has a jump discontinuity at this point.
 Suppose that  $f'$ satisfies a certain Zygmund condition dependent on a parameter $\gamma>0.$
  We prove that the renormalizations of $f$ are approximated by  M\"{o}bius transformations
in $C^{1}$-norm if $\gamma\in (0,1]$ and
they are approximated
in $C^{2}$-norm if  $\gamma\in (1,+\infty).$  It is also shown, that the coefficients of  M\"{o}bius transformations get asymptotically linearly
dependent.
\end{abstract}
\section{Introduction and results}
One of the most studied classes of dynamical systems are orientation-preserving
 diffeomorphisms  of the circle $\mathbb{S}^{1}=\mathbb{R}/\mathbb{Z}$. Poincar\'{e} (1885) noticed that the orbit structure of orientation-preserving
diffeomorphism   $f$ is determined by some irrational mod 1, called the \emph{ rotation number} of $f$ and denoted $\rho=\rho(f),$
 in the following sense: for any $\xi\in \mathbb{S}^1,$ the mapping $f^{j}(\xi)\rightarrow j\rho \mod 1,$ $j\in \mathbb{Z},$
is orientation-preserving. Denjoy proved, that if $f$ is an orientation-preserving $C^{1}$-diffeomorphism of the circle with
 irrational rotation number $\rho$ and $\log f'$ has bounded variation then, the orbit
  $\{f^{j}(\xi)\}_{j\in \mathbb{Z}}$  is dense and the mapping $f^{j}(\xi)\rightarrow j\rho \mod 1$
  can therefore be extended by continuity to a homeomorphism $h$ of $\mathbb{S}^1,$ which conjugates $f$
  to the rigid  rotation $f_{\rho}:\xi\rightarrow \xi+\rho \mod 1.$

In this context, a natural
question is to ask  under what condition one can  obtain the  smoothness
 of conjugacy $h.$ The first local results, that is the results  requiring the closeness of diffeomorphism to
the rigid  rotation,   were  obtained by  Arnold \cite{Ar1961} and  Moser \cite{Mo1966}.
Next Herman \cite{He1979} obtained a first global result (i.e. not requiring the closeness of diffeomorphism to
the rigid rotation)  asserting regularity of conjugacy of the circle diffeomorphism.
Further, his result was developed by Yoccoz \cite{Yo1984},
Stark \cite{Stark}, Khanin \& Sinai \cite{KS1987, KS1989},
Katznelson \& Ornstein \cite{KO1989.1, KO1989.2} and Khanin \& Teplinsky \cite{KT2009}. They have shown that if $f$ is $C^3$ or $C^{2+\nu}$
and $\rho$ satisfies a certain Diophantine condition then the conjugacy will be at least $C^1$.

 Note, the renormalization approach in \cite{KS1989} and  \cite{Stark}  to Herman's theory is more natural.
 In this approach,  the regularity of conjugacy statement can be obtained using the convergence of renormalizations
 of sufficiently smooth circle diffeomorphisms with the same irrational rotation number.
 In fact, the renormalizations of circle diffeomorphisms approach to a family of linear maps with slope 1. Such
  convergence implies the regularity of conjugacy if the rotation number satisfies the certain Diophantine condition.

A natural generalization of diffeomorphisms of the circle are diffeomorphisms
with break points, i.e., those circle diffeomorphisms which are smooth everywhere with the exception
of finitely many points  at which their derivatives have  jump discontinuities.
Circle diffeomorphisms with breaks were investigated by Herman \cite{He1979}  in the
piecewise-linear (PL) case. General (non PL) circle diffeomorphisms with a break point was studied  by
Khanin \& Vul \cite{KV1991, VK1990} at the beginning of 90's.

It turns out that, the renormalizations of circle diffeomorphisms with  break points
 are rather different from those of smooth  diffeomorphisms.
 Indeed, the renormalizations of such diffeomorphisms   approach  exponentially to a
 two-dimensional space of  M\"{o}bius transformations (see Theorem \ref{VK})
 with very non-trivial dynamics.
 In the sequel we  provide  some basic utilizations of Khanin \& Vul's \cite{VK1990} result
 in  the \emph{rigidity} theory.  Rigidity, in this context, is the phenomenon of smooth conjugacy
between any two maps which a priori are only topologically equivalent.

Since the renormalizations of diffeomorphisms with breaks, approach
to a two-dimensional space of M\"{o}bius  transformations,
 it is convenient to study the action of renormalization operator
 on a space of pairs (for particular size of a break) in $\mathbb{R}^{2}$ which correspond to M\"{o}bius  pairs.
The investigations  of those  M\"{o}bius transformations in  \cite{KhKhem2003}, \cite{KT2013} and  \cite{KhYam}
showed that the renormalization operator possesses strong hyperbolic properties in a certain domain of that space,
which are analogous to those predicted by Lanford \cite{Lan1988} in  case of critical rotations.

The hyperbolicity of renormalization operator  and as well as analysing the M\"{o}bius
transformations give exponential convergence of renormalizations of two circle diffeomorphisms with a break in
$C^2$-topology \cite{KhKhem2003}, \cite{KT2013}. Very recently, the renormalization
conjecture, that is any two smooth circle diffeomorphisms with a break, with the same
 irrational rotation number and same size of the break, belong to the same universality class,
 (i.e., their renormalizations approach each other)
 has been proven by Khanin \& Koci\'{c}  \cite{KSasa}.
 This conjecture for $C^{\infty}$-critical circle maps with bounded type of rotation numbers
 was proven by de Faria \& de Melo \cite{MeloFarie I, MeloFarie II}
 and extended in the analytic setting, to all rotation numbers by Yampolsky \cite{Yampol}.
Another aspect of   Khanin \& Vul's result is,
their result   plays an important
role to show uniformly regularity (see definition \cite{KSE}, \cite{KT2013})
of renormalizations.

Hence, the validity of above  statements ensure $C^{1}$-rigidity of circle maps with singularities.
Note that, the rigidity results for circle diffeomorphisms with breaks have been obtained by Khanin \& Khmelov \cite{KhKhem2003}
for a countable set of irrational rotations numbers and for a zero measure
set of irrational rotations numbers by Khanin \& Teplinsky \cite{KT2013}.
The rigidity problem was recently completely solved by Khanin \emph{et al.}  \cite{KSE} for almost all rotation numbers.
In the case of critical circle maps de Faria \& de Melo \cite{MeloFarie I, MeloFarie II} proved that
for a full measure  set of irrational rotation numbers,  $C^{3}$-smooth critical maps
with odd integer order of the critical point, the conjugacy is  $C^{1+\alpha}$ for some $\alpha>0.$
And any two real-analytic critical circle maps with the same irrational rotation number
of bounded type are $C^{1+\nu}$ conjugate for some $\nu>0.$
Recently,  Khanin \& Teplinsky \cite{KT2007} proved that for real-analytic critical circle maps
and for any irrational rotation numbers the rigidity holds,
this  rigidity is called robust rigidity.

The purpose of this work is to extend Khanin \& Vul's result \cite{VK1990}.
For this we consider a class of circle  diffeomorphisms
 with  break points satisfying a certain Zygmund condition depending
 on a parameter $\gamma>0.$  The class of such diffeomorphisms  is wider than $C^{2+\nu}.$
 In our first main Theorem \ref{main1} we show that, if $\gamma\in (0,1]$
then the renormalizations approach to  M\"{o}bius transformations
with  the rate of  $\mathcal{O}(n^{-\gamma})$ in $C^{1}$-topology.
In the case of $\gamma\in (1, +\infty)$ this class is a subset of $C^{2},$
therefore we  investigate the  renormalizations in $C^{2}$-topology.
In the second main Theorem \ref{coefbog}  we show that, if
$\gamma\in (1, +\infty)$ then the renormalizations approach to  M\"{o}bius transformations
with  the rate of  $\mathcal{O}(n^{-\gamma})$ in $C^{1}$-topology.
Moreover, the second derivative of the renormalizations approach to the second derivative 
of those M\"{o}bius transformations with  the rate of  $\mathcal{O}(n^{-(\gamma-1)})$ in $C^{0}$-topology.
 It is also shown that, the coefficients of  M\"{o}bius transformations
get asymptotically linearly dependent.

\subsection{Renormalizations of circle diffeomorphisms  with a break point}
 Let $f:\mathbb{S}^{1}\rightarrow \mathbb{S}^{1}$ be a circle diffeomorphism  with a single break point
 $\xi_{0}$ i.e.,  $f$ satisfies the following
conditions:
 \begin{itemize}
   \item [(i)] $f\in C^{1}([\xi_{0}, \xi_{0}+1]);$
   \item [(ii)] $\inf_{\xi\neq \xi_{0}} f'(\xi)>0;$
   \item [(iii)] $f$ has  one-sided derivatives $f'(\xi_{0}\pm 0)>0$ and
   $$
   c:=\sqrt{\frac{f'(\xi_{0}-0)}{f'(\xi_{0}+0)}}\neq 1.
   $$
 \end{itemize}
 The number $c$ is called a \emph{size of break} of $f'$ at  $\xi_{0}.$
 Below we briefly  recall  the definition of renormalization  and
formulate some obtained results.  Let the rotation number  $\rho$ be an irrational.
 We use the following \emph{continued fraction} expansion of rotation number
$$
\rho=1/(k_{1}+1/(k_{2}+...)):=[k_{1}, k_{2},...,k_{n},...).
$$
The sequence of positive integers $(k_{n})$ with $n\geq 1$  called
 \emph{partial quotients} and it is infinite if and only if $\rho$ is irrational.
 Every irrational $\rho$ defines uniquely  the
sequence of partial quotients. Conversely, every infinite sequence of partial quotients defines
uniquely an irrational number $\rho$ as the limit of the sequence of rational convergents
$p_{n}/q_{n}=[k_{1}, k_{2},...,k_{n}].$ The coprime numbers $p_{n}$ and $q_{n}$
satisfy the recurrence relations $p_{n}=k_{n}p_{n-1}+p_{n-2}$ and
$q_{n}=k_{n}q_{n-1}+q_{n-2}$ for $n\geq 1,$ where, for convenience we set
$p_0=0,$ $q_0=1$ and $p_{-1}=1,$ $q_{-1}=0.$
Taking the break point  $\xi_{0}\in \mathbb{S}^{1},$  we define the
 $n$th \emph{fundamental segment} $I^{n}_{0}:=I^{n}_{0}(\xi_{0})$ as the circle arc
  $[\xi_{0}, f^{q_{n}}(\xi_{0})]$ if $n$ is even and $[f^{q_{n}}(\xi_{0}), \xi_{0}]$ if $n$ is odd.
The union of two consequent fundamental segments $I^{n-1}_{0},$  $I^{n}_{0}$
is called the $n$th \emph{renormalization neighborhood} of the point $\xi_{0}$ and we denote
it by $\mathcal{V}_{n}.$
Certain number of images of fundamental segments $I^{n-1}_{0}$ and $I^{n}_{0},$ under  iterates of
 $f,$ cover whole circle without overlapping beyond the endpoints and form
the $n$th \emph{dynamical partition} of the circle
$$
\mathbb{P}_{n}:=\mathbb{P}_{n}(\xi_{0}, f)=\left\lbrace I_{j}^{n}:=f^{j}(I^{n}_{0}), \ 0\leq j<q_{n-1}\right\rbrace
\cup \left\lbrace I_{i}^{n-1}:=f^{i}(I^{n-1}_{0}), \ 0\leq i<q_{n}\right\rbrace.
$$
On $\mathcal{V}_{n}$   we
 define  the Poincar\'{e} map
$\pi_{n}=(f^{q_n}, f^{q_{n-1}}):\mathcal{V}_{n}\rightarrow \mathcal{V}_{n}$ as follows.
\[\pi_{n}(\xi)=\left\{\begin{array}{ll}f^{q_n}(\xi), & \mbox{if \,\,
$\xi\in I^{n-1}_{0}$},\\ f^{q_{n-1}}(\xi), & \mbox{if \,\,
$\xi\in I^{n}_{0}$}.\end{array}\right.\]
The main idea of renormalization  method is to study
the behaviour of the  Poincar\'{e} map
$\pi_{n}$ as $n\rightarrow \infty.$
For this, rescaling the coordinates are usually used.
Let $\mathcal{A}_{n}:\mathbb{R}\rightarrow \mathbb{S}^{1}$ be an affine
covering map such that $\mathcal{A}_{n}([-1, 0])=I_{0}^{n},$ with
$\mathcal{A}_{n}(0)=\xi_{0}$ and $\mathcal{A}_{n}(-1)=f^{q_{n}}(\xi_{0}).$
Define $a_{n}\in \mathbb{R}$ to be a positive number
such that $\mathcal{A}_{n-1}(a_{n})=f^{q_{n}}(\xi_{0}).$
 Obviously, $\mathcal{A}_{n-1}:[0, a_{n}]\rightarrow I^{n}_{0}$
and $\mathcal{A}_{n-1}:[-1, 0]\rightarrow I^{n-1}_{0}.$
Consider a mapping $\mathcal{S}_{n}:[-1, a_{n}]\rightarrow [-1, a_{n}]$
 defined by $\mathcal{S}_{n}:=(f_{n}, g_{n})=\mathcal{A}^{-1}_{n-1}\circ \pi_{n}\circ \mathcal{A}_{n-1},$
where $\mathcal{A}^{-1}_{n-1}$ is the inverse branch that
maps $\mathcal{V}_{n}$  onto $[-1, a_{n}].$
The pair of functions $(f_{n}, g_{n})$  is called the $n$th \emph{renormalization} of $f$
with respect to $\xi_{0}.$
Now we provide the exact statement of Khanin  \& Vul's  \cite{VK1990} result
 asserting the closeness of $(f_{n}, g_{n})$  to the M\"{o}bius transformations $F_{n}:=F_{a_{n}, b_{n},m_{n}}$ and
$G_{n}:=G_{a_{n}, b_{n},m_{n},c_{n}}$  where
\begin{equation}\label{eq6h4}
F_{n}(z)=\frac{a_{n}+(a_{n}+b_{n}m_{n})z}{1+(1-m_{n})z}, \,\,\,\,\,\,\,
G_{n}(z)=\frac{-a_{n}c_{n}+(c_{n}-b_{n}m_{n})z}{a_{n}c_{n}+(m_{n}-c_{n})z}
\end{equation}
and
$$
c_{n}=c^{(-1)^n}, \,\,\,\ b_{n}=\frac{|I_{0}^{n-1}|-|I_{q_{n-1}}^{n}|}{|I_{0}^{n-1}|},
\,\,\,\, m_{n}=\exp\Big((-1)^n\sum_{i=0}^{q_{n}-1}\int_{I_{i}^{n-1}}\frac{f''(x)}{2f'(x)}dx\Big).
$$
\begin{theo}\cite{VK1990}\label{VK}
Let $f$ be a $C^{2+\nu}(\mathbb{S}^{1}\setminus \{\xi_{0}\}),$ $\nu>0$  diffeomorphism with a break point $\xi_0$
and with irrational rotation number.  Then there exist constants $C>0$ and $0<\lambda=\lambda(f) <1$ such that
$$
\|f_{n}-F_{n}\|_{C^{2}([-1,0])}\leq C \lambda^{n},\,\,\,\,\,\, \,\,\,\,\,\,
\|g_{n}-G_{n}\|_{C^{1}([0,a_{n}])}\leq C \lambda^{n}
$$
and
$$
\|g''_{n}-G''_{n}\|_{C^{0}([0,a_{n}])}\leq \frac{C \lambda^{n}}{a_{n}}.
$$
Moreover,
$$
|a_{n}+b_{n}m_{n}-c_{n}|\leq Ca_{n}\lambda^{n}.
$$
\end{theo}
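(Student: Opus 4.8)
\emph{Proof strategy.}\ The plan is to exploit the characterization of M\"obius transformations as precisely those one-dimensional maps whose Schwarzian-type nonlinearity defect vanishes identically, the cocycle identity this defect satisfies under composition, and the a priori bounds of Denjoy theory for circle diffeomorphisms with a break. First I would record the a priori estimates (see \cite{He1979,VK1990}): since $\log f'$ has bounded variation (a $C^{\nu}$ part off $\xi_{0}$ plus a single jump there), $f$ is topologically conjugate to $R_{\rho}$, the elements of the dynamical partitions $\mathbb{P}_{n}$ shrink exponentially, $\max_{j}|I^{n}_{j}|\le C\mu^{n}$ for some $\mu=\mu(f)\in(0,1)$, consecutive elements of $\mathbb{P}_{n}$ are comparable, and --- because the images $f^{j}(I^{n}_{0})$ are pairwise disjoint and $N_{f}:=f''/f'$ is $C^{\nu}$ off the break --- the iterates $f^{k}$ restricted to elements of $\mathbb{P}_{n}$ have uniformly bounded distortion. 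In particular $b_{n},m_{n}$ are bounded, $c_{n}=c^{\pm1}$, and $a_{n}$ is bounded above; all the claimed estimates are stable as $a_{n}\to 0$, which is why the factor $a_{n}^{-1}$ is admissible.

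The main step is the fractional-linear mechanism. Writing $N_{h}=(\log h')'$ for the nonlinearity, one has $N_{g\circ h}=(N_{g}\circ h)h'+N_{h}$ and $N_{\mathcal{A}^{-1}\circ h\circ\mathcal{A}}=(N_{h}\circ\mathcal{A})\mathcal{A}'$ for affine $\mathcal{A}$; applied to $f_{n}=\mathcal{A}^{-1}_{n-1}\circ f^{q_{n}}\circ\mathcal{A}_{n-1}$ these give $N_{f_{n}}(z)=|I^{n-1}_{0}|\sum_{j=0}^{q_{n}-1}N_{f}(f^{j}\mathcal{A}_{n-1}(z))(f^{j})'(\mathcal{A}_{n-1}(z))$, which is $O(1)$ but not small, so $f_{n}$ is not close to an affine map. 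The relevant quantity is the Schwarzian-type defect $Sh:=N_{h}'-\tfrac12N_{h}^{2}$, which obeys $S(g\circ h)=(Sg\circ h)(h')^{2}+Sh$ and vanishes on affine and on M\"obius maps; substituting the sum above, the off-diagonal terms telescope and cancel, leaving $Sf_{n}(z)=|I^{n-1}_{0}|^{2}\sum_{j}(Sf)(f^{j}\mathcal{A}_{n-1}(z))((f^{j})'(\mathcal{A}_{n-1}(z)))^{2}$, whence by bounded distortion $|Sf_{n}(z)|\le C\sum_{j}|I^{n-1}_{j}|^{2}\le C(\max_{j}|I^{n-1}_{j}|)\sum_{j}|I^{n-1}_{j}|\le C\mu^{n}$ (here $I^{n-1}_{j}=f^{j}(I^{n-1}_{0})$, and $\sum_{j}|I^{n-1}_{j}|\le 1$ since these are disjoint elements of $\mathbb{P}_{n}$). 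Thus $N_{f_{n}}$ solves a Riccati equation with exponentially small forcing, and a Gronwall estimate --- with coefficients kept bounded by the a priori bounds --- shows $N_{f_{n}}$ is exponentially close to the nonlinearity of a M\"obius map; matching $f_{n}$ at the endpoints $-1,0$ then pins that M\"obius map $M_{n}$ (the third datum being the nonlinearity at an endpoint, already supplied by the Riccati comparison) so that $\|f_{n}-M_{n}\|_{C^{2}([-1,0])}\le C\lambda^{n}$ for a suitable $\lambda=\lambda(f)\in(0,1)$. When $f$ is only $C^{2+\nu}$ with $\nu<1$, $Sf$ need not exist classically, so one runs the argument with the cross-ratio-distortion substitute for the Schwarzian, whose contribution on $I^{n}_{j}$ is $O(|I^{n}_{j}|^{1+\nu})$ once the telescoping derivative factor has been extracted; the same summation works, and one may take $\lambda=\mu^{\min(1,\nu)}$.

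It remains to identify $M_{n}$ with $F_{n}$ and to treat $g_{n}$. By definition $f_{n}(0)=\mathcal{A}^{-1}_{n-1}(f^{q_{n}}\xi_{0})=a_{n}$, and by a combinatorial identity for $\mathbb{P}_{n}$ one has $f_{n}(-1)=\mathcal{A}^{-1}_{n-1}(f^{q_{n-1}+q_{n}}\xi_{0})=-b_{n}$, both exactly, matching $F_{n}(0)$ and $F_{n}(-1)$ in \eqref{eq6h4}; moreover $\log f_{n}'(0)$, which is a sum of one-sided logarithmic derivatives of $f$ along the orbit, differs by $O(\lambda^{n})$ from the telescoping Riemann-type sum $\sum_{i}\int_{I^{n-1}_{i}}N_{f}$ defining $m_{n}$ (using $\int_{\mathbb{S}^{1}}N_{f}=2\log c$ to account for the unique global contribution), so $f_{n}'(0)=m_{n}(a_{n}+b_{n})+O(\lambda^{n})=F_{n}'(0)+O(\lambda^{n})$; hence $\|M_{n}-F_{n}\|_{C^{2}}\le C\lambda^{n}$, giving the first bound. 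For $g_{n}=\mathcal{A}^{-1}_{n-1}\circ f^{q_{n-1}}\circ\mathcal{A}_{n-1}$ on $[0,a_{n}]$ the same scheme applies, the sole difference being that the break $\xi_{0}$ is now the left endpoint of the domain, so the first iterate of $f$ contributes the one-sided derivative $f'(\xi_{0}+0)$; this extra linear factor, together with $\int_{\mathbb{S}^{1}}N_{f}=2\log c$, is exactly what introduces the dependence on $c_{n}=c^{(-1)^{n}}$ into the coefficients of $G_{n}$. Because $[0,a_{n}]$ may be short, differentiating the $C^{1}$-close functions $g_{n},G_{n}$ on an interval of length $a_{n}$ costs a factor $a_{n}^{-1}$, which yields $\|g_{n}-G_{n}\|_{C^{1}([0,a_{n}])}\le C\lambda^{n}$ together with $\|g_{n}''-G_{n}''\|_{C^{0}([0,a_{n}])}\le C\lambda^{n}/a_{n}$.

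Finally, $|a_{n}+b_{n}m_{n}-c_{n}|\le Ca_{n}\lambda^{n}$ is a compatibility relation: the true pair $(f_{n},g_{n})$ is the return map of a genuine circle homeomorphism with break $c$, and therefore satisfies an exact commutation identity, which on the space of M\"obius pairs parametrized by $(a,b,m,c)$ reduces, to leading order, to $a+bm=c$; substituting the already established approximations $f_{n}\approx F_{n}$, $g_{n}\approx G_{n}$ and the geometric identities for $a_{n},b_{n},m_{n},c_{n}$ into this identity yields the stated bound, the extra factor $a_{n}$ reflecting that the relevant discrepancy is weighted by the length of $g_{n}$'s domain. The hard part is the fractional-linear step: making the error accumulated over $q_{n}$ --- exponentially many --- compositions still decay geometrically. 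This hinges on the precise interplay of the $|I^{n}_{j}|^{1+\nu}$-decay of the local non-M\"obiusness, the telescoping of the derivative cocycle $(f^{j})'$ (so that the local errors get summed against $\sum_{j}|I^{n}_{j}|\le 1$ rather than multiplied), and uniform bounded distortion so that no factor degenerates --- all carried out in the $C^{2}$ norm, not merely $C^{0}$, and with honest bookkeeping of the break.
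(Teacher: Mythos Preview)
First, a framing remark: this theorem is not proved in the present paper --- it is quoted from \cite{VK1990} as background. The paper does, however, prove its direct analogues (Theorems~\ref{main1} and~\ref{coefbog}) by a method it explicitly says ``follow[s] closely that of \cite{KV1991}'', so the comparison below is against that method.

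Your route is genuinely different from the paper's. You organize the argument around the Schwarzian $Sh=N_h'-\tfrac12 N_h^2$ and its composition cocycle, deduce $\|Sf_n\|_{C^0}=O(\mu^n)$ from disjointness of the $I^{n-1}_j$, and then pass from smallness of $Sf_n$ to $C^2$-closeness to a M\"obius map via a Riccati/Gronwall comparison. The paper instead never writes a Schwarzian: it works with the \emph{ratio distortions} $\mathcal{R}_\alpha,\mathcal{R}_\beta$ on each $I^{n-1}_i$, packages the iterated map as the explicit cross-ratio identity
\[
\frac{1-z_{q_n}}{z_{q_n}}\cdot\frac{z_0}{1-z_0}=\prod_{i=0}^{q_n-1}\frac{\mathcal{R}_{\alpha_i}(x_i)}{\mathcal{R}_{\beta_i}(x_i)},
\]
and estimates the logarithmic error $\widetilde\Upsilon_n$ and its derivatives term-by-term (Lemmas~\ref{nolinchihosila}--\ref{ikkinchihosila2}). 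This yields directly that $z_{q_n}$ is $C^1$- (and, in the smoother case, $C^2$-) close to the specific M\"obius map $\mathcal M_{m_n}$, after which $F_n$ and $G_n$ are read off by the affine change (\ref{7.1})--(\ref{7.3}). The coefficient relation $|a_n+b_nm_n-c_n|\le Ca_n\lambda^n$ is obtained not from an abstract commutation identity but from the concrete rescaling $g_{n+1}(z)=-a_n^{-1}f_n(-a_nz)$, which produces the explicit recursion (\ref{8.4}); one then iterates and sums a geometric series. Your approach is conceptually cleaner in $C^3$ (one scalar quantity $Sf_n$ controls everything); the paper's buys robustness under low regularity, since the ratio-distortion estimates need only first differences of $f'$ and a Zygmund-type modulus, not $f'''$.

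There is, however, a real gap in your $C^{2+\nu}$ reduction. Once $\nu<1$, $Sf$ is not defined, and you say only that ``one runs the argument with the cross-ratio-distortion substitute''. Cross-ratio distortion of $f$ on four points in $I^{n-1}_j$ is indeed $O(|I^{n-1}_j|^{1+\nu})$, and these contributions do sum to $O(\mu^{\nu n})$; but that gives $C^0$-control of a four-point functional, not $C^2$-closeness of $f_n$ to a M\"obius map. Your Riccati/Gronwall step is precisely what fails here: without $N_{f_n}'$ you cannot write the ODE, and there is no obvious integrated substitute that upgrades a cross-ratio bound to a uniform bound on $f_n''-F_n''$. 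The paper (and \cite{KV1991}) avoids this by never differentiating $N_f$: the $C^2$-estimate comes from bounding $z_0(1-z_0)\widetilde\Upsilon_n'$ and $z_0(1-z_0)\widetilde\Upsilon_n''$ via Lemmas~\ref{qavariqlik}--\ref{ratiohosilaayirmasi}, which need only the second symmetric difference of $f'$. If you want to salvage the Schwarzian viewpoint in $C^{2+\nu}$, you would have to replace $Sf_n$ by a discrete Schwarzian (a normalized cross-ratio defect on triples/quadruples at scale comparable to $|I^{n-1}_0|$) and prove a discrete Riccati-type stability lemma showing that smallness of this quantity forces $C^2$-closeness to a M\"obius map on $[-1,0]$; that is essentially equivalent in effort to the paper's ratio-distortion lemmas. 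A second, smaller point: your identification $f_n'(0)=m_n(a_n+b_n)+O(\lambda^n)$ is asserted but not argued; in the paper this is the content of Lemma~\ref{nolinchihosila} combined with (\ref{ok54}) and the representation (\ref{7.1}), and it does require the summed distortion estimate, not just endpoint matching.
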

\textbf{Remark.} In the case of rational rotation numbers, the renormalizations of circle diffeomorphisms with breaks
 was investigated  by Khanin \& Vul also.
They analyzed  periodic trajectories of
 renormalization operator on one parameter family.
Moreover, they showed that the Lebesgue measure
of the set of parameters which correspond
to the rational rotation numbers is full.
Later on this result was generalized by  Khmelev
\cite{Khmelev} for circle diffeomorphisms  with several break points.

\textbf{Remark.} Recently, Cunha \& Smania \cite{CS2013} have  studied
Rauzy-Veech renormalizations of $C^{2+\nu}$-circle diffeomorphisms with
several break points.
The main idea of this work is to consider the piecewise-smooth circle homeomorphisms as
generalized interval exchange transformations. They have proved that Rauzy-Veech renormalizations of
$C^{2+\nu}$-generalized interval exchange maps satisfying
a certain  combinatorial conditions are approximated by M\"{o}bius
transformations in $C^2$-topology.
\subsection{Main results}
  To state  our main results,  we define a new class of circle diffeomorphisms
  with one break point.  Consider the function  $\mathcal{Z}_{\gamma}:[0,1)\rightarrow (0, +\infty),$
given
  $$
  \mathcal{Z}_{\gamma}(x)=\frac{1}{(\log\frac{1}{x})^{\gamma}}, \,\,\,\,\,\,\,\,x\in (0,1)
  $$
and $\mathcal{Z}_{\gamma}(0)=0,$ where $\gamma>0.$
 Let $f$ be a circle diffeomerphism
with the break point $\xi_{0}.$ Without loss of generality we may assume
$\xi_{0}=0.$ Denote by $\Delta^{2}f'(\xi, \tau)$
the \emph{second symmetric difference} of $f',$ that is
$$
\Delta^{2}f'(\xi, \tau)=f'(\xi+\tau)+f'(\xi-\tau)-2f'(\xi)
$$
where  $\xi \in \mathbb{S}^{1}$ and  $\tau\in [0,\frac{1}{2}].$
Suppose that there exists a constant $C>0$ such that
\begin{equation}\label{ok1}
 \|\Delta^{2}f'(\cdot, \tau)\|_{L^{\infty}(\mathbb{S}^{1})}\leq C\tau\mathcal{Z}_{\gamma}(\tau).
\end{equation}
Note that the class of real  functions satisfying (\ref{ok1})
with $\mathcal{Z}_{\gamma}(\tau)$ replaced by  1,  is called the Zygmund class and denoted by $\Lambda_{\ast}$
(see \cite{Zyg}, p. 43). The  class  $\Lambda_{\ast}$
plays a key role to investigate  trigonometric series and this class
 was applied  to the theory of circle homeomorphisms
for the first time by Jun Hu \& Sullivan (see \cite{JSull1997}, \cite{Sull1992}).
 They extended the classical Denjoy's theorem to this class.
Note that  if  $f'$ satisfies (\ref{ok1})
 then it is not necessarily of bounded variation  and vice versa.
Indeed, there are examples in \cite{MeloBook} and \cite{Zyg}
for this statement.
  In this work we study the class of circle diffeomerphisms $f$  with the break point $\xi_{0},$
 whose  derivatives $f'$ have bounded variation and
satisfy  the  inequality (\ref{ok1}). And we denote this class by
$\mathrm{D}^{1+\mathcal{Z}_{\gamma}}(\mathbb{S}^{1}\setminus \{\xi_{0}\}).$
 Let $f\in \mathrm{D}^{1+\mathcal{Z}_{\gamma}}(\mathbb{S}^{1}\setminus \{\xi_{0}\})$
 and its rotation number is irrational.
 We define two quantities  $\widetilde{m}_{n}$  and $\widehat{m}_{n}$
  as
$$
\widetilde{m}_{n}=\exp\Big(\sum_{i=0}^{q_{n}-1}\frac{f'(\xi_{i})-f'(\xi_{i+q_{n-1}})}{2f'(\xi_{i})}\Big), \,\,\,
\widehat{m}_{n}=\exp\Big(\sum_{j=0}^{q_{n-1}-1}\frac{f'(\xi_{j+q_n})-f'(\xi_{j})}{2f'(\xi_{j+q_n})}\Big)
$$
where $\xi_{i},$ $\xi_{i+q_{n-1}}$ and $\xi_{j},$  $\xi_{j+q_{n}}$  are endpoints of the intervals $I_{i}^{n-1},$
$I_{j}^{n}$ respectively.
Since the systems of intervals $\{I_{i}^{n-1}, \,\,\, 0\leq i <q_{n}\},$
$\{I_{j}^{n}, \,\,\,  0\leq j <q_{n-1}\}$
do not intersect  and $f'$ has bounded variation,  $\widetilde{m}_{n},$ $\widehat{m}_{n}$
are bounded for any $n\geq 1.$
Below we will show that $\widetilde{m}_{n}$ and $\widehat{m}_{n}$ are exponentially
close to $m_{n}$ and $c_{n}m^{-1}_{n}$ respectively, for $\gamma>1.$ Using
$\widetilde{m}_{n},$ $\widehat{m}_{n}$ we define  M\"{o}bius transformations
 similarly as in (\ref{eq6h4}) as follow
  \begin{equation}\label{eq6h5}
  \widetilde{F}_{n}(z)=\frac{a_{n}+(a_{n}+b_{n}\widetilde{m}_{n})z}{1+(1-\widetilde{m}_{n})z},\,\,\,\,\,\,\,\,\,\
  \widehat{G}_{n}(z)=\frac{-a_{n}\widehat{m}_{n}+(\widehat{m}_{n}-b_{n})z}{a_{n}\widehat{m}_{n}+(1-\widehat{m}_{n})z}.
  \end{equation}
  Our first main result is the following.
 \begin{theo}\label{main1}
Let $f \in \mathrm{D}^{1+\mathcal{Z}_{\gamma}}(\mathbb{S}^{1}\setminus \{\xi_{0}\})$ and $\gamma\in (0, 1].$
Suppose the  rotation number of $f$ is irrational.   Then there exists  a constant $C=C(f)>0$ and a
 natural number $N_{0}=N_{0}(f)$  such that   the following inequalities
\begin{equation}\label{Ren1}
\|f_{n}-\widetilde{F}_{n}\|_{C^{1}([-1,0])}\leq \frac{C}{n^{\gamma}},\,\,\,\,\,\, \,\,\,\,\,\,
\|g_{n}-\widehat{G}_{n}\|_{C^{1}([0,a_{n}])}\leq  \frac{C}{n^{\gamma}}
\end{equation}
hold for all $n\geq N_{0}.$
\end{theo}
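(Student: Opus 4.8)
The plan is to follow the renormalization scheme underlying Theorem~\ref{VK}, replacing each use of $f\in C^{2+\nu}$ by the weaker estimates available from $f'$ being of bounded variation and satisfying the Zygmund-type hypothesis (\ref{ok1}), and tracking how the loss of smoothness degrades the rate from $\lambda^{n}$ to $\mathcal{O}(n^{-\gamma})$. First I would record the metric input, which uses only bounded variation of $f'$: the partial quotients $q_{n}$ grow at least exponentially; the dynamical partition $\mathbb{P}_{n}$ refines exponentially, $\max_{J\in\mathbb{P}_{n}}|J|\le C\lambda^{n}$ for some $\lambda=\lambda(f)\in(0,1)$; adjacent atoms are comparable; and the distortions of $f^{q_{n}}$ on $I_{0}^{n-1}$ and of $f^{q_{n-1}}$ on $I_{0}^{n}$ are uniformly bounded, via the Denjoy--Koksma inequality applied to $\log f'$. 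Because $\mathcal{A}_{n-1}$ is affine, its scalings cancel in the chain rule, so $f_{n}'(z)=(f^{q_{n}})'(\mathcal{A}_{n-1}(z))$ on $[-1,0]$ and $g_{n}'(z)=(f^{q_{n-1}})'(\mathcal{A}_{n-1}(z))$ on $[0,a_{n}]$, while the endpoints match exactly, $f_{n}(0)=a_{n}=\widetilde F_{n}(0)$ and $g_{n}(0)=-1=\widehat G_{n}(0)$. Since a diffeomorphism $\phi$ of an interval is fractional-linear precisely when $(\phi')^{-1/2}$ is affine, and since $(f^{q_{n}})'$, $\widetilde F_{n}'$ are bounded above and below, the whole of (\ref{Ren1}) reduces to the single $C^{0}$ estimate
\[
\| \big((f^{q_{n}})'\circ\mathcal{A}_{n-1}\big)^{-1/2}-\big(\widetilde F_{n}'\big)^{-1/2} \|_{C^{0}([-1,0])}\le\frac{C}{n^{\gamma}}
\]
and its analogue for $g_{n}$, $f^{q_{n-1}}$, $\widehat G_{n}$; indeed the estimate controls $\|f_{n}'-\widetilde F_{n}'\|_{C^{0}}$, and then $\|f_{n}-\widetilde F_{n}\|_{C^{0}}$ by integrating from $z=0$.

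To prove this I would write $f^{q_{n}}=f\circ\cdots\circ f$ ($q_{n}$ factors) along the orbit of $I_{0}^{n-1}$ and compare it with the composition $M_{q_{n}-1}\circ\cdots\circ M_{0}$, where $M_{i}$ is the fractional-linear map carrying $I_{i}^{n-1}$ onto $I_{i+1}^{n-1}$ and matching the boundary data of $f$ on $I_{i}^{n-1}$. This composition is again fractional-linear by the group law, and a direct computation identifies it, in the rescaled coordinate, with $\widetilde F_{n}$ up to negligible error: the endpoint derivative ratios of the $M_{i}$ multiply up into $\widetilde m_{n}$ (whose definition is tailored to precisely this), and the break size is felt because the atoms of $\mathbb{P}_{n}$ tile the circle and the corresponding jump-type quantities sum to $\log c$ --- which is why the normalization has to be $\widetilde m_{n}$ rather than a quantity tending to $1$, as in the smooth case. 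On each atom the discrepancy between $f$ and $M_{i}$ --- equivalently, the cross-ratio distortion of $f$ on $I_{i}^{n-1}$, which a fractional-linear map leaves invariant --- is of second order; since $f'$ is only assumed to satisfy (\ref{ok1}) rather than to be differentiable, it must be controlled through the second symmetric difference of $f'$, and telescoping that in the usual dyadic fashion bounds it by $C\,|I_{i}^{n-1}|\,\mathcal{Z}_{\gamma}(|I_{i}^{n-1}|)$ together with a remainder quadratic in the oscillation of $f'$ over $I_{i}^{n-1}$.

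Propagating these local bounds through the $q_{n}$ compositions requires a bootstrap: one checks inductively that the orbits of $f^{i}$ and of $M_{i-1}\circ\cdots\circ M_{0}$ stay within the error accumulated so far, so that the bounded distortion and the finiteness of $\mathrm{Var}(f')$ keep the total discrepancy comparable to the sum of the local ones. It then remains to estimate that sum, and here (\ref{ok1}) finally fixes the rate: since the atoms of $\mathbb{P}_{n}$ are pairwise disjoint, $\mathcal{Z}_{\gamma}$ is increasing, and $\max_{i}|I_{i}^{n-1}|\le C\lambda^{n}<1$,
\[
\sum_{i=0}^{q_{n}-1}|I_{i}^{n-1}|\,\mathcal{Z}_{\gamma}\big(|I_{i}^{n-1}|\big)\ \le\ \mathcal{Z}_{\gamma}\big(C\lambda^{n}\big)\sum_{i=0}^{q_{n}-1}|I_{i}^{n-1}|\ \le\ \mathcal{Z}_{\gamma}\big(C\lambda^{n}\big)\ =\ \big(n\log(1/\lambda)-\log C\big)^{-\gamma}\ \le\ \frac{C}{n^{\gamma}}
\]
for $n\ge N_{0}$, while $\sum_{i}(\mathrm{osc}_{I_{i}^{n-1}}f')^{2}\le\big(\max_{i}\mathrm{osc}_{I_{i}^{n-1}}f'\big)\,\mathrm{Var}(f')$ is exponentially small, since (\ref{ok1}) forces the modulus of continuity of $f'$ at scale $\lambda^{n}$ to be so. The estimate for $g_{n}$ against $\widehat G_{n}$ is identical, with $I_{0}^{n-1},q_{n}$ replaced by $I_{0}^{n},q_{n-1}$ and the atoms being now the $I_{j}^{n}$.

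I expect the bootstrap to be the main obstacle. One must verify that the fractional-linear normalization absorbs \emph{all} first-order, merely-bounded-variation contributions --- individually these decay far too slowly to be of any use --- leaving only the genuinely second-order, Zygmund-controlled remainder; and one must show that this remainder, carried through $q_{n}$ compositions each of which can magnify it, stays under control, which forces the bounded distortion of $f^{q_{n}}$ on the fundamental segments and the precise choice of $a_{n},b_{n},\widetilde m_{n}$ to act in concert. By contrast, the metric preliminaries, the reduction to a single $C^{0}$ estimate, and the final summation producing $C/n^{\gamma}$ are routine once the scheme is in place.
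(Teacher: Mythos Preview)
Your outline has the right geometric picture and the final summation $\sum_i |I_i^{n-1}|\,\mathcal Z_\gamma(|I_i^{n-1}|)\le C/n^\gamma$ is exactly the one that drives the rate, but the route is genuinely different from the paper's, and the paper's device dissolves precisely the bootstrap you flag as the obstacle. Instead of comparing $f^{q_n}$ with a composition $M_{q_n-1}\circ\cdots\circ M_0$ of local M\"obius interpolants and propagating errors inductively, the paper passes through the single scalar
\[
\widetilde\Upsilon_n(z_0)\;=\;\log\frac{\mathcal R([\xi_{q_{n-1}},x];\,f^{q_n})}{\mathcal R([x,\xi_0];\,f^{q_n})}+\log\widetilde m_n .
\]
Because ratio distortion is multiplicative under composition, this is \emph{exactly} a finite sum $\sum_{i<q_n}\Psi(x_i)+\log\widetilde m_n$ over the orbit atoms; no bootstrap is needed to turn composition into summation. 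Lemma~\ref{raiovahosila} shows each $\Psi(x_i)$ equals $(f'(\alpha_i)-f'(\beta_i))/(2f'(\beta_i))$ plus an $O(|I_i^{n-1}|\mathcal Z_\gamma(|I_i^{n-1}|))$ remainder, and the leading pieces cancel against $\log\widetilde m_n$ \emph{by its definition}, giving $|\widetilde\Upsilon_n|\le C/n^\gamma$ (Lemma~\ref{nolinchihosila}). A parallel bound $|z_0(1-z_0)\,\widetilde\Upsilon_n'(z_0)|\le C/n^\gamma$ (Lemma~\ref{birinchihosila}, via Corollary~\ref{corqavariq}) handles the derivative. The proof then finishes through the exact algebraic identity
\[
z_{q_n}(z_0)=\frac{z_0\,\widetilde m_n}{(1-z_0)\exp\widetilde\Upsilon_n(z_0)+z_0\,\widetilde m_n},\qquad f_n(z)=a_n-(a_n+b_n)\,z_{q_n}(-z),
\]
which converts those two scalar bounds directly into $\|f_n-\widetilde F_n\|_{C^1}\le C/n^\gamma$; the weight $z_0(1-z_0)$ that tames the endpoint blow-up of $\widetilde\Upsilon_n'$ arises for free when one differentiates this formula.

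Two specific points in your scheme would need tightening if you pursued it. First, $\widetilde m_n$ is \emph{not} a product of endpoint derivative ratios; it is $\exp\sum_i(f'(\beta_i)-f'(\alpha_i))/(2f'(\beta_i))$, tailored to the first-order expansion of $\mathcal R_{\alpha_i}/\mathcal R_{\beta_i}$ in Lemma~\ref{raiovahosila}, so identifying your M\"obius composite with the \emph{prescribed} $\widetilde F_n$ would require an additional (harmless, $O(\sum_i(\mathrm{osc}_{I_i^{n-1}}f')^2)$) comparison that you have not stated. Second, your $(\phi')^{-1/2}$ reduction is correct, but carrying the local Zygmund estimate through $q_n$ compositions without the ratio-distortion trick forces you to control how a factor like $z_0(1-z_0)$ --- absent from your formulation --- interacts with the chain rule along the orbit to kill the endpoint singularities of the derivative comparison; this is exactly what Finzi's inequality together with the relative-coordinate identities (\ref{ok23}) handle automatically in the paper's Lemma~\ref{birinchihosila}, and it is not clear how your direct-composition bootstrap would reproduce it.
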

Note that  the class $\mathrm{D}^{1+\mathcal{Z}_{\gamma}}(\mathbb{S}^{1}\setminus \{\xi_{0}\})$
will be "better" when $\gamma$  increases.
This gives  more opportunities to
 better  understand the behavior of $\mathcal{S}_{n}.$
Now we consider the case $\gamma>1.$
In this case, because of Theorem \ref{WZ}
stated in Section 2, $f'$ is differentiable on $\mathbb{S}^{1}\setminus \{\xi_{0}\},$ hence
$\mathcal{S}'_{n}$ is differentiable on $[-1, a_{n}]\setminus \{0\}.$
 Therefore  we can investigate the  behavior
of $\mathcal{S}''_{n}.$  Our second  result is the following.
\begin{theo}\label{coefbog}
Let $f \in \mathrm{D}^{1+\mathcal{Z}_{\gamma}}(\mathbb{S}^{1}\setminus \{\xi_{0}\})$ and $\gamma>1.$
Suppose the  rotation number of $f$ is irrational.
  Then there exists  a constant $C=C(f)>0$ and a
 natural number $N_{0}=N_{0}(f)$  such that for all $n\geq N_{0}$  the following inequalities hold
\begin{equation}\label{Ren100}
\|f_{n}-F_{n}\|_{C^{1}([-1,0])}\leq \frac{C}{n^{\gamma}},\,\,\,\,\,\, \,\,\,\,\,\,
\|g_{n}-G_{n}\|_{C^{1}([0,a_{n}])}\leq  \frac{C}{n^{\gamma}}.
\end{equation}
\begin{equation}\label{Ren2}
\|f''_{n}-F''_{n}\|_{C^{0}([-1,0])}\leq \frac{C}{n^{\gamma-1}},\,\,\,\,\,\, \,\,\,\,\,\,\,\,
\|g''_{n}-G''_{n}\|_{C^{0}([0,a_{n}])}\leq  \frac{C}{a_{n}n^{\gamma-1}}.
\end{equation}
Moreover,
\begin{equation}\label{ren3}
|a_{n}+b_{n}m_{n}-c_{n}|\leq \frac{Ca_{n}}{n^{\gamma}}
\end{equation}
where $F_{n}$ and $G_{n}$ are defined in (\ref{eq6h4}).
\end{theo}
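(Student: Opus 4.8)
The plan is to reduce the $\gamma>1$ statement to the machinery already behind Theorem \ref{main1}, upgrading the estimates by exploiting the extra regularity that $\gamma>1$ buys. First I would invoke Theorem \ref{WZ} (stated in Section 2): for $\gamma>1$ the Zygmund-type condition (\ref{ok1}) forces $f'$ to be differentiable on $\mathbb{S}^1\setminus\{\xi_0\}$, with a modulus of continuity on $f''$ controlled by $\mathcal{Z}_{\gamma-1}$; concretely one expects a bound of the form $|f''(\xi+\tau)-f''(\xi)|\le C\mathcal{Z}_{\gamma-1}(\tau)$, i.e. $f''\in \Lambda_*$-type space with gain $\gamma-1$. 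This is the analytic input that replaces "$f\in C^{2+\nu}$" in Khanin--Vul's Theorem \ref{VK}; the difference is that the modulus now decays only like $(\log 1/\tau)^{-(\gamma-1)}$ rather than like $\tau^{\nu}$, which is exactly why the exponential rates $\lambda^n$ degrade to polynomial rates $n^{-\gamma}$ and $n^{-(\gamma-1)}$.

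The core of the argument is a distortion/comparison estimate on the renormalization neighborhood $\mathcal{V}_n$. Writing $f_n=\mathcal{A}_{n-1}^{-1}\circ f^{q_n}\circ\mathcal{A}_{n-1}$ on $[-1,0]$ and similarly for $g_n$, I would expand $\log(f^{q_n})'=\sum_{i=0}^{q_n-1}\log f'\circ f^i$ and compare term by term with the corresponding logarithmic derivative of the model Möbius map $F_n$. The key point is that the Möbius maps in (\ref{eq6h4}) are precisely the maps whose Schwarzian vanishes and whose non-linearity $\eta=(\log(\cdot)')'$ is the affine function matching the endpoint data $a_n,b_n,m_n$; so the deviation $f_n-F_n$ is governed by the deviation of $\sum \frac{f''}{f'}\circ f^i$ (restricted to the dynamical partition intervals) from its piecewise-linear interpolant, plus the deviation of $m_n$ from the product formula. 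Using the Denjoy-type bounded geometry of the partition $\mathbb{P}_n$ (comparability of adjacent intervals, exponential shrinking of $\max|I_j^n|$) together with the $\mathcal{Z}_{\gamma-1}$-modulus on $f''$, each of the $q_n$ telescoping differences is of size $O(|I_i^{n-1}|\cdot\mathcal{Z}_{\gamma-1}(|I_i^{n-1}|))$, and summing over the partition — where the lengths sum to $O(1)$ and the worst length is $O(\lambda_0^n)$ for some $\lambda_0<1$ — yields $\sum_i|I_i^{n-1}|\,\mathcal{Z}_{\gamma-1}(|I_i^{n-1}|)=O(n^{-(\gamma-1)})$ since $\mathcal{Z}_{\gamma-1}(\lambda_0^n)\asymp n^{-(\gamma-1)}$. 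Integrating once more (passing from the non-linearity to the map itself, and using that on $[-1,0]$ one integrates against a length-$\le 1$ interval) gives the extra power, i.e. $O(n^{-\gamma})$ for the $C^1$-bounds (\ref{Ren100}) and $O(n^{-(\gamma-1)})$ for the $C^0$-bound on second derivatives (\ref{Ren2}); the factor $a_n^{-1}$ in the $g_n''$ estimate appears because $\mathcal{A}_{n-1}$ rescales the interval $I_0^n$ of length comparable to $a_n\cdot|I_0^{n-1}|$, exactly as in Theorem \ref{VK}. For (\ref{ren3}) I would use the already-established identity/estimate showing $\widetilde m_n$ is within $O(n^{-\gamma})$ of $m_n$ and the geometric relation $a_n+b_nm_n-c_n=O(a_n n^{-\gamma})$ coming from comparing $f_n(0)$, $f_n'(0^-)$ with $F_n(0)$, $F_n'(0^-)$ at the break point and using $c=c_n^{(-1)^n}$ for the break size; essentially (\ref{ren3}) is the $C^1$-closeness of Theorem \ref{main1}/(\ref{Ren100}) read off at the single point $0$, and one observes $F_n=\widetilde F_n$, $G_n=\widehat G_n$ up to an $O(n^{-\gamma})$ error once the exponential closeness $|\widetilde m_n-m_n|, |\widehat m_n-c_nm_n^{-1}|$ claimed in the text (for $\gamma>1$, hence for bounded-variation $f'$ with the stronger modulus) is invoked.

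The main obstacle is controlling the second derivative, i.e. inequality (\ref{Ren2}): for $\gamma\le 1$ one never needs $\mathcal{S}_n''$, and the jump from a $C^1$ statement to a $C^0$ statement on $\mathcal{S}_n''$ requires differentiating the telescoping sum $\sum\frac{f''}{f'}\circ f^i$ once more, which brings in $f'''$ — not available in our class. The resolution I would pursue is to avoid $f'''$ entirely by writing $\mathcal{S}_n''$ through the non-linearity $\eta_{f^{q_n}}=(\log(f^{q_n})')'$ and its Möbius counterpart (which \emph{is} real-analytic), so that $\mathcal{S}_n''-F_n''$ is expressed as a first-order difference of $\eta$'s against the bounded-geometry weights; then the $\mathcal{Z}_{\gamma-1}$-modulus of $f''$ — rather than any bound on $f'''$ — is exactly what is needed, and summing over the partition produces the $n^{-(\gamma-1)}$ rate. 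Making this rigorous requires a careful cross-ratio / Schwarzian estimate along the lines of Khanin--Vul, with every occurrence of their $\lambda^n$ replaced by the appropriate partial sum of $\mathcal{Z}_{\gamma-1}$-values over $\mathbb{P}_n$; the bookkeeping of these partial sums, and verifying they are genuinely $O(n^{-(\gamma-1)})$ uniformly in $n$ (which uses both the exponential shrinking of the largest partition interval and the fact that $\sum_j|I_j^n|=O(1)$), is the technically heaviest part of the proof.
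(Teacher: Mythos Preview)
Your treatment of (\ref{Ren100}) and (\ref{Ren2}) is in the right spirit, though it differs in presentation from the paper. The paper does not work with non-linearities or Schwarzians directly; it introduces the function $\widetilde{\Upsilon}_n(z_0)=\log\frac{\mathcal{R}([\xi_{q_{n-1}},x];f^{q_n})}{\mathcal{R}([x,\xi_0];f^{q_n})}+\log\widetilde m_n$ and proves four separate lemmas bounding $|\widetilde\Upsilon_n|$, $|z_0(1-z_0)\widetilde\Upsilon_n'|$ (both $O(n^{-\gamma})$), and $|\widetilde\Upsilon_n'|$, $|z_0(1-z_0)\widetilde\Upsilon_n''|$ (both $O(n^{-(\gamma-1)})$, valid only for $\gamma>1$). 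The relative coordinate $z_{q_n}$ is then written explicitly as a rational function of $z_0$, $\widetilde m_n$, and $\exp(\widetilde\Upsilon_n)$, so the $C^1$ and $C^2$ bounds on $f_n-F_n$ follow by direct differentiation of that formula. Your heuristic ``integrating once more gives the extra power'' is not quite the mechanism: the $n^{-\gamma}$ bound on the first derivative comes from the \emph{weighted} estimate on $z_0(1-z_0)\widetilde\Upsilon_n'$, while the loss to $n^{-(\gamma-1)}$ at second order comes from removing that weight, which requires the $\mathcal{P}_\gamma$-modulus on $f''$ (Theorem~\ref{WZ}); these are independent estimates, not an integration step. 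Also, the closeness $|\widetilde m_n-m_n|$ is actually \emph{exponential}, $O(\lambda^n)$, not merely $O(n^{-\gamma})$.

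The genuine gap is in your argument for (\ref{ren3}). Reading off the $C^1$-closeness $\|f_n-F_n\|_{C^1}\le Cn^{-\gamma}$ at the point $z=0$ can only deliver $|a_n+b_nm_n-c_n|=O(n^{-\gamma})$, not $O(a_n\,n^{-\gamma})$; since $a_n$ may be arbitrarily small (unbounded partial quotients), the factor $a_n$ is a strictly stronger conclusion that does not follow from pointwise evaluation of (\ref{Ren100}). The paper's proof of (\ref{ren3}) is of a different nature: it uses the compatibility between consecutive renormalizations, namely $g_{n+1}(z)=-a_n^{-1}f_n(-a_nz)$ (both represent $f^{q_n}$ in different affine charts), which at $z=0$ and $z=a_{n+1}$ yields two exact identities relating $(a_{n+1},b_{n+1},\widehat m_{n+1})$ to $(a_n,b_n,\widetilde m_n)$. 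From these one extracts the recursion
\[
r_{n+1}=-c_{n+1}a_{n+1}\,r_n+O(n^{-\gamma}),\qquad r_n:=a_n+b_nm_n-c_n,
\]
and then iterates it. The products $\prod_{i=j}^{n+1}a_ic_i$ are controlled by $|I_0^n|/|I_0^{j-1}|=O(\lambda^{n-j+1})$, so the accumulated error is $O\bigl(a_{n+1}\sum_{i=1}^n \lambda^{n-i}i^{-\gamma}\bigr)=O(a_{n+1}n^{-\gamma})$. This recursive step, and in particular the use of the cross-level identity $g_{n+1}=-a_n^{-1}f_n(-a_n\,\cdot)$, is missing from your proposal and is essential for obtaining the factor $a_n$.
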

\textbf{Remark.}
\begin{itemize}
  \item  It is obvious  that the classes
 in Theorems \ref{main1} and \ref{coefbog} are wider than the class of Theorem \ref{VK}
 but the estimations are weaker.
\item  The Zygmund conditions is quite natural in the context of Cross-Ration Distortion (CRD).
 The relations between Zygmund conditions and CRD estimates have been studied  in the book \cite{MeloBook} for $\gamma=1.$
  Since Ratio Distortion (RD) is a partial case of CRD,  the approaches in the above book  work very well
  to estimate RD for the considered Zygmund class.
  On the other hand the renormalizations can be represented by RD, therefore we investigate
  the renormalizations by RD.
  \end{itemize}
The structure of  paper is as follows. In Section 2, we provide brief facts
about Zygmund functions and following Khanin \& Vul  \cite{VK1990}
we define a relative coordinate of an interval.
 Then we obtain some estimates for the distortion of interval.
Moreover, we provide relations between  distortion and  relative coordinates of  intervals.
In Section 3, we get estimates for the ratio of $f^{q_{n}}$-distortion of intervals
i.e., distortion of intervals with respect to $f^{q_{n}}$ for the different $\gamma$'s.
 In Section 4, we compare the relative coordinates with M\"{o}bius transformations.
Finally, in Section 5 we prove our main theorems.

\section{Ratio distortions and Zygmund condition}
\subsection{Notes on Zygmund functions}
In this subsection we provide brief facts about functions satisfying inequality (\ref{ok1}).
These facts will be used in the proof of main results.
 Let $I=[a,b]$ be an interval with  the length less than 1.
 Consider a continuous function
$\mathcal{K}:I\rightarrow\mathbb{R}.$ Suppose $\mathcal{K}$ satisfies the inequality (\ref{ok1})
on $I$ i.e.,
\begin{equation}\label{ok1111}
 \|\Delta^{2}\mathcal{K}(\cdot, \tau)\|_{L^{\infty}([a,b])}\leq C\tau\mathcal{Z}_{\gamma}(\tau),
\end{equation}
where $\tau\in [0, |I|/2].$
It turns out that the functions satisfying relation (\ref{ok1111})
have "a considerable degree of continuity".
\begin{theo}\label{modofcon}
Let $\mathcal{K}:I\rightarrow\mathbb{R}$ be  continuous
and satisfies the inequality (\ref{ok1111}) on $I$.  If $\gamma\in (0,1)$ then
$$
\omega(\delta,\mathcal{K} )=\mathcal{O}\Big(\delta(\log\frac{1}{\delta})^{1-\gamma}\Big).
$$
If $\gamma=1$ then
$$
\omega(\delta,\mathcal{K} )=\mathcal{O}\Big(\delta(\log\log\frac{1}{\delta})\Big)
$$
where $\omega(\cdot, \mathcal{K})$ is the modulus of continuity of $\mathcal{K}.$
\end{theo}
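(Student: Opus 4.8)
The plan is to run the classical telescoping argument for Zygmund functions while keeping track of the extra decay supplied by the weight $\mathcal{Z}_{\gamma}$; for $\gamma=0$, i.e. $\mathcal{Z}_{\gamma}\equiv 1$, this argument reproduces the familiar estimate $\omega(\delta,\mathcal{K})=\mathcal{O}(\delta\log(1/\delta))$. The basic tool is the identity obtained by iterating the definition of the second symmetric difference: for $x\in I$, $h>0$ and an integer $N\ge 1$ with $[x,x+2^{N}h]\subseteq I$, a straightforward induction on $N$ (the case $N=1$ being just the definition of $\Delta^{2}\mathcal{K}(x+h,h)$) gives
\[
\mathcal{K}(x+h)-\mathcal{K}(x)=\frac{1}{2^{N}}\big(\mathcal{K}(x+2^{N}h)-\mathcal{K}(x)\big)-\sum_{k=0}^{N-1}\frac{1}{2^{k+1}}\,\Delta^{2}\mathcal{K}(x+2^{k}h,\,2^{k}h).
\]
After possibly replacing $I$ by its mirror image one may assume $x$ lies in the left half of $I$, so that $x+2^{N}h$ still belongs to $I$ and the iteration runs to the right as written; this takes care of all admissible pairs of points used to compute $\omega(\delta,\mathcal{K})$.

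Fix $\delta\in(0,|I|/4]$ and let $N=N(\delta)$ be the largest integer with $2^{N}\delta\le|I|/2$; then $2^{-N}<4\delta/|I|$, while every radius $2^{k}\delta$ with $0\le k\le N-1$ is at most $|I|/4$, so the hypothesis (\ref{ok1111}) is available at each of these scales. Since $\mathcal{K}$ is continuous on the compact interval $I$, it is bounded, hence the first term in the identity is $\mathcal{O}(\delta)$; this is exactly the place where $N$ must be taken as large as the geometry of $I$ permits rather than letting $N\to\infty$, because only then does the factor $2^{-N}$, which is comparable to $\delta$, absorb the otherwise uncontrolled difference $\mathcal{K}(x+2^{N}h)-\mathcal{K}(x)$. (Naively iterating the functional inequality $\omega(2t)\le 2\omega(t)+Ct\,\mathcal{Z}_{\gamma}(t)$ yields an estimate running in the wrong direction, which is why the explicit telescoping identity is needed.) For the sum, (\ref{ok1111}) gives
\[
\Big|\sum_{k=0}^{N-1}\frac{1}{2^{k+1}}\,\Delta^{2}\mathcal{K}(x+2^{k}\delta,\,2^{k}\delta)\Big|\le \frac{C\delta}{2}\sum_{k=0}^{N-1}\mathcal{Z}_{\gamma}(2^{k}\delta)=\frac{C\delta}{2}\sum_{k=0}^{N-1}\frac{1}{\big(\log(1/\delta)-k\log 2\big)^{\gamma}}.
\]

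It remains to estimate the last sum. Writing $L=\log(1/\delta)$, the summands increase with $k$, and the choice of $N$ keeps $L-k\log 2$ bounded below by $\log(2/|I|)>0$ throughout; comparing with the integral $\frac{1}{\log 2}\int_{L-N\log 2}^{L}t^{-\gamma}\,dt$ then gives $\sum_{k=0}^{N-1}\mathcal{Z}_{\gamma}(2^{k}\delta)=\mathcal{O}\big(L^{1-\gamma}\big)$ when $\gamma\in(0,1)$ and $\mathcal{O}(\log L)$ when $\gamma=1$. Inserting this together with the $\mathcal{O}(\delta)$ term into the identity and taking the supremum over all admissible pairs of points in $I$ yields $\omega(\delta,\mathcal{K})=\mathcal{O}\big(\delta(\log(1/\delta))^{1-\gamma}\big)$ for $\gamma\in(0,1)$ and $\omega(\delta,\mathcal{K})=\mathcal{O}\big(\delta\log\log(1/\delta)\big)$ for $\gamma=1$, as claimed. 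The only genuinely delicate points are the calibration of $N$ and the treatment of points near $\partial I$; the rest is the classical Zygmund telescoping plus a routine integral comparison, so I expect no substantial obstacle.
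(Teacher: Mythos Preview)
Your argument is correct and is precisely the classical Zygmund telescoping argument that the paper has in mind: the paper gives no proof of its own but simply refers the reader to \cite{Zyg}, p.~44, whose method you have reproduced with the weight $\mathcal{Z}_{\gamma}$ inserted. The telescoping identity, the calibration of $N$ so that $2^{-N}$ is comparable to $\delta$, and the integral comparison for $\sum_{k}\mathcal{Z}_{\gamma}(2^{k}\delta)$ are all exactly as in Zygmund's proof, so there is nothing to add.
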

\begin{proof}
The proof of this theorem follows closely that of  \cite{Zyg} (p. 44) and we leave it to the reader.
\end{proof}
The following theorem was proved by
Weiss \& Zygmund in \cite{WZyg1959}. This theorem will be used in the proof of next theorem.
\begin{theo}\label{WZorginal}
Let $\mathcal{K}:\mathbb{R}\rightarrow\mathbb{R}$ be $2\pi$-periodic  and satisfies (\ref{ok1111}) for some  $\gamma \in (\frac{1}{2}, 1].$
Then $\mathcal{K}$  is absolute continuous and $\mathcal{K}\in L_{p}[0,2\pi]$ for every $p>1.$
\end{theo}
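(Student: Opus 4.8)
The plan is to read hypothesis (\ref{ok1111}) through the Fourier series of $\mathcal{K}$ on the circle $\mathbb{R}/2\pi\Z$ and thereby extract $L^{2}$-control of the derivative. Since a measurable $2\pi$-periodic function with uniformly bounded second symmetric differences is automatically bounded (classical; cf.\ \cite{Zyg}, p.~43), we may assume $\mathcal{K}\in L^{2}[0,2\pi]$ and write $\mathcal{K}(x)\sim\sum_{n\in\Z}\widehat{\mathcal{K}}(n)e^{inx}$. The starting point is the elementary computation of the Fourier coefficients of a second symmetric difference,
\[
\widehat{\Delta^{2}\mathcal{K}(\cdot,\tau)}(n)=\bigl(e^{in\tau}+e^{-in\tau}-2\bigr)\widehat{\mathcal{K}}(n)=-4\sin^{2}(n\tau/2)\,\widehat{\mathcal{K}}(n),
\]
which by Plancherel yields $\|\Delta^{2}\mathcal{K}(\cdot,\tau)\|_{L^{2}[0,2\pi]}^{2}=32\pi\sum_{n\in\Z}\sin^{4}(n\tau/2)\,|\widehat{\mathcal{K}}(n)|^{2}$.

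First I would evaluate this on the dyadic scales $\tau_{k}:=\pi 2^{-k}$, for $k$ large enough that (\ref{ok1111}) applies to $\tau_k$. If $2^{k-1}\le|n|<2^{k}$ then $|n|\tau_{k}/2\in[\pi/4,\pi/2)$, so $\sin^{4}(n\tau_{k}/2)\ge\frac{1}{4}$, and therefore
\[
\frac{1}{4}\sum_{2^{k-1}\le|n|<2^{k}}|\widehat{\mathcal{K}}(n)|^{2}\le\sum_{n\in\Z}\sin^{4}(n\tau_{k}/2)\,|\widehat{\mathcal{K}}(n)|^{2}=\frac{1}{32\pi}\,\|\Delta^{2}\mathcal{K}(\cdot,\tau_{k})\|_{L^{2}}^{2}\le\frac{C^{2}}{16}\cdot\frac{\tau_{k}^{2}}{\bigl(\log(1/\tau_{k})\bigr)^{2\gamma}},
\]
the last inequality being hypothesis (\ref{ok1111}). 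Since $\tau_{k}^{2}=\pi^{2}4^{-k}$ and $\log(1/\tau_{k})=k\log 2-\log\pi\ge\frac{1}{2}k\log 2$ for large $k$, this gives a constant $C_{1}=C_{1}(C,\gamma)$ with $\sum_{2^{k-1}\le|n|<2^{k}}|\widehat{\mathcal{K}}(n)|^{2}\le C_{1}\,4^{-k}k^{-2\gamma}$, hence $\sum_{2^{k-1}\le|n|<2^{k}}n^{2}|\widehat{\mathcal{K}}(n)|^{2}\le 4^{k}\cdot C_{1}4^{-k}k^{-2\gamma}=C_{1}k^{-2\gamma}$. Summing over $k$ and using $2\gamma>1$ produces $\sum_{n\in\Z}n^{2}|\widehat{\mathcal{K}}(n)|^{2}<\infty$.

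This last inequality says precisely that $\mathcal{K}$ lies in the Sobolev space $W^{1,2}$ of the circle, and from here the conclusions are standard. Square-summability of $\bigl(n\widehat{\mathcal{K}}(n)\bigr)_{n}$ means the series $\sum_{n}in\widehat{\mathcal{K}}(n)e^{inx}$ converges in $L^{2}$; its sum is the distributional derivative of $\mathcal{K}$, so $\mathcal{K}$ agrees almost everywhere with an absolutely continuous function, and, being continuous, $\mathcal{K}$ is itself absolutely continuous, with $\mathcal{K}'\in L^{2}[0,2\pi]$. Finally, Cauchy--Schwarz gives $|\mathcal{K}(x)-\mathcal{K}(y)|=\bigl|\int_{y}^{x}\mathcal{K}'\bigr|\le\|\mathcal{K}'\|_{L^{2}}\,|x-y|^{1/2}$, so $\mathcal{K}$ is $\frac{1}{2}$-H\"older, in particular bounded, and hence $\mathcal{K}\in L^{p}[0,2\pi]$ for every $p>1$, as claimed; the companion statement $\mathcal{K}'\in L^{2}\subseteq L^{p}$ holds for $1<p\le 2$, while the full range $p<\infty$ for $\mathcal{K}'$ would in addition require the Littlewood--Paley characterization of $L^{p}$ together with the block bound $\bigl\|\sum_{2^{k-1}\le|n|<2^{k}}n\widehat{\mathcal{K}}(n)e^{inx}\bigr\|_{L^{2}}\le c\,k^{-\gamma}$ obtained above.

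The single real obstacle is the borderline exponent: the whole argument hinges on the convergence of $\sum_{k}k^{-2\gamma}$ — equivalently of $\int_{0}\tau^{-1}\bigl(\log(1/\tau)\bigr)^{-2\gamma}\,d\tau$ near $0$ — which holds exactly when $\gamma>\frac{1}{2}$ and fails for $\gamma\le\frac{1}{2}$. This is where the hypothesis $\gamma\in(\frac{1}{2},1]$ is used, and the statement is sharp in this sense; the remaining ingredients (the reduction to $\mathcal{K}\in L^{2}$, the Plancherel constant, and the passage from square-summability of $n\widehat{\mathcal{K}}(n)$ to absolute continuity) are routine.
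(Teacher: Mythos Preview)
The paper does not prove this theorem at all: it is quoted as a result of Weiss and Zygmund \cite{WZyg1959} (with the remark that a more direct proof appears in John--Nirenberg \cite{JN1961}), and is used only as a black box in the sketch of Theorem~\ref{WZ}. So there is no in-paper proof to compare against, and your task reduces to whether your argument stands on its own.

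Your Fourier/dyadic-block argument is correct and is essentially the classical route: the identity $\widehat{\Delta^{2}\mathcal{K}(\cdot,\tau)}(n)=-4\sin^{2}(n\tau/2)\,\widehat{\mathcal{K}}(n)$, evaluated at $\tau_{k}=\pi 2^{-k}$, localises to the block $2^{k-1}\le|n|<2^{k}$ and gives $\sum_{\text{block}}n^{2}|\widehat{\mathcal{K}}(n)|^{2}\lesssim k^{-2\gamma}$; summing in $k$ uses exactly $\gamma>\tfrac12$ and lands $\mathcal{K}$ in $W^{1,2}$, hence absolutely continuous. All constants and the Plancherel normalisation check out.

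One remark worth making explicit in your write-up: the statement as printed in the paper is almost certainly a misprint. The conclusion ``$\mathcal{K}\in L^{p}$ for every $p>1$'' is vacuous once $\mathcal{K}$ is continuous on a compact interval; the actual Weiss--Zygmund theorem asserts $\mathcal{K}'\in L^{p}$ for every $p>1$, and this is what is needed downstream (and what the paper's own Afterthought invokes when it says ``$f''\in L_{p}(\mathbb{S}^{1})$''). You already flag this, and you correctly note that your argument as it stands gives $\mathcal{K}'\in L^{2}$ only; getting the full range $p<\infty$ does require the additional Littlewood--Paley step you mention, applied to the square-function bound $\bigl(\sum_{k}k^{-2\gamma}\bigr)^{1/2}<\infty$. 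That extension is standard, but if you want a self-contained proof of the theorem as Weiss--Zygmund intended it, you should spell it out rather than leave it as an aside.
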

More direct and general proof of this theorem can be found in \cite{JN1961}.
In this theorem the assumption $\gamma \in (\frac{1}{2}, 1]$ is crucial.
The theorem is false for   $\gamma \in (0, \frac{1}{2}].$
Indeed, there are functions which satisfy  (\ref{ok1111}) for some $\gamma \in (0, \frac{1}{2}]$
but almost nowhere differentiable (see \emph{e.g.} \cite{Zyg}).
Next we provide a theorem on differentiability of  $\mathcal{K}$ in the case of $\gamma>1.$
To state this theorem we need the following function $\mathcal{P_{\gamma}}:(0,1)\rightarrow \mathbb{R}.$
\begin{equation}\label{eq6303333avgust}
\mathcal{P_{\gamma}}(x)=\sum_{n=1}^{\infty}\mathcal{Z}_{\gamma}(x 2^{-n}) \,\,\,\,\text{where}\,\,\, x\in(0,1)
\,\,\,\,\,\text{and}\,\,\,\gamma>1.
\end{equation}
It is clear that $\mathcal{P_{\gamma}}$ is  continuous  and $\underset{x\rightarrow 0}{\lim}\mathcal{P_{\gamma}}(x)=0.$
\begin{theo}\label{WZ}
Let $\mathcal{K}:I\rightarrow\mathbb{R}$ be  continuous and satisfies (\ref{ok1111}) for some  $\gamma>1.$
Then $\mathcal{K}\in C^{1}(I)$ and for any $\xi,$ $\eta\in I$
there exists a constant $C>0$ such that
$$
|\mathcal{K'}(\xi)-\mathcal{K'}(\eta)|\leq C\cdot\mathcal{P_{\gamma}}(|\xi-\eta|).
$$
\end{theo}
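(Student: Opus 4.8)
The plan is to reduce everything to a dyadic telescoping argument, exactly as in the classical proof that a Zygmund function with a summable modulus is $C^1$. First I would fix $\xi,\eta\in I$ with $h:=|\xi-\eta|$ small (the general case follows by covering $I$ by finitely many short subintervals). The key mechanism is the second-difference estimate: if $m$ is the midpoint of a subinterval $[u,v]\subset I$ of length $2t$, then $\mathcal{K}(u)+\mathcal{K}(v)-2\mathcal{K}(m)=\Delta^2\mathcal{K}(m,t)$, so $|\mathcal{K}(u)+\mathcal{K}(v)-2\mathcal{K}(m)|\leq C\,t\,\mathcal{Z}_\gamma(t)$ by (\ref{ok1111}). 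This lets me control how much the slope of a chord changes when the chord is halved.

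Next I would set up the telescoping. For a point $x\in I$ and a small step $t>0$ with $[x,x+t]\subset I$, define the chord slopes $s_k(x,t):=\dfrac{\mathcal{K}(x+2^{-k}t)-\mathcal{K}(x)}{2^{-k}t}$. The midpoint inequality applied to $[x,x+2^{-k}t]$ with midpoint $x+2^{-k-1}t$ gives $|s_{k+1}(x,t)-s_k(x,t)|\le \dfrac{C}{2^{-k}t}\cdot 2^{-k-1}t\,\mathcal{Z}_\gamma(2^{-k-1}t)= \dfrac{C}{2}\,\mathcal{Z}_\gamma(2^{-k-1}t)$. Since $\gamma>1$, the series $\sum_{k\ge 0}\mathcal{Z}_\gamma(2^{-k-1}t)$ converges and is by definition comparable to $\mathcal{P}_\gamma(t)$ (up to shifting the index and a bounded factor, using that $\mathcal{Z}_\gamma$ is increasing near $0$); in particular it tends to $0$ as $t\to 0$. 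Hence $(s_k(x,t))_k$ is Cauchy, so $\lim_{k\to\infty}s_k(x,t)$ exists; a standard argument comparing the discrete sequence $s_k$ with the continuous limit $\lim_{\delta\to 0}\frac{\mathcal{K}(x+\delta)-\mathcal{K}(x)}{\delta}$ (one shows the full difference quotient is squeezed between consecutive $s_k$'s up to an error controlled by the same tail) shows $\mathcal{K}$ is differentiable at $x$ with $\mathcal{K}'(x)=\lim_k s_k(x,t)$, independent of $t$, and moreover
\begin{equation}\label{eq:tail}
\Big|\mathcal{K}'(x)-\frac{\mathcal{K}(x+t)-\mathcal{K}(x)}{t}\Big|\le C\sum_{k\ge 0}\mathcal{Z}_\gamma(2^{-k-1}t)\le C'\,\mathcal{P}_\gamma(t).
\end{equation}

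Finally, to get the modulus estimate for $\mathcal{K}'$, I would write, for $\xi$ and $\eta=\xi+h$,
$$
\mathcal{K}'(\xi)-\mathcal{K}'(\eta)=\Big(\mathcal{K}'(\xi)-\tfrac{\mathcal{K}(\xi+h)-\mathcal{K}(\xi)}{h}\Big)+\Big(\tfrac{\mathcal{K}(\eta)-\mathcal{K}(\eta-h)}{h}-\mathcal{K}'(\eta)\Big),
$$
using $\mathcal{K}(\xi+h)=\mathcal{K}(\eta)$ and $\mathcal{K}(\xi)=\mathcal{K}(\eta-h)$ so the two chord terms cancel. Each parenthesized term is bounded by $C'\mathcal{P}_\gamma(h)$ by (\ref{eq:tail}) (the second one after applying the one-sided version of the telescoping at $\eta$ stepping left), giving $|\mathcal{K}'(\xi)-\mathcal{K}'(\eta)|\le C\,\mathcal{P}_\gamma(|\xi-\eta|)$. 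Continuity of $\mathcal{K}'$ on $I$ is then immediate since $\mathcal{P}_\gamma(\delta)\to 0$ as $\delta\to 0$, which also upgrades pointwise differentiability to $\mathcal{K}\in C^1(I)$.

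The main obstacle is the careful bookkeeping in (\ref{eq:tail}): one must pass rigorously from the dyadic sequence $s_k(x,t)$ to the genuine derivative (the limit of \emph{all} difference quotients, not just dyadic ones), controlling arbitrary $\delta\in(2^{-k-1}t,2^{-k}t)$ by a further second-difference comparison, and one must check that $\sum_{k\ge 0}\mathcal{Z}_\gamma(2^{-k-1}t)$ is genuinely $O(\mathcal{P}_\gamma(t))$ uniformly in small $t$ — this uses monotonicity of $\mathcal{Z}_\gamma$ near $0$ and that the defining sum of $\mathcal{P}_\gamma$ starts at $n=1$, so only an index shift and the bounded first term $\mathcal{Z}_\gamma(t/2)$ separate the two. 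None of this is deep, but it is where the hypothesis $\gamma>1$ is actually consumed (for $\gamma\le 1$ the dyadic series diverges, consistent with Theorems \ref{modofcon} and \ref{WZorginal}).
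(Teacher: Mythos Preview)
Your argument is correct, and the core estimate---the dyadic telescoping of chord slopes, which is precisely the ``Zygmund p.~44'' computation---is the same one the paper invokes. The identity $\sum_{k\ge 0}\mathcal{Z}_\gamma(2^{-k-1}t)=\mathcal{P}_\gamma(t)$ holds on the nose from the definition (\ref{eq6303333avgust}), so the comparison you worry about in your final paragraph is exact, not merely up to constants.

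Where you genuinely differ from the paper is in the logical scaffolding. The paper does not construct $\mathcal{K}'$ directly: it first invokes the Weiss--Zygmund theorem (Theorem~\ref{WZorginal}) to conclude that $\mathcal{K}$ is absolutely continuous, hence differentiable a.e., and then runs the telescoping estimate only at Lebesgue points of $\mathcal{K}'$, obtaining uniform continuity on a full-measure set and finishing by continuous extension. Your route bypasses Weiss--Zygmund entirely, building $\mathcal{K}'(x)$ at every point as the limit of the Cauchy sequence $(s_k(x,t))_k$. This is more elementary and self-contained; the price is exactly the bookkeeping you flag---passing from the dyadic subsequence to the full difference quotient---which the paper sidesteps because at a Lebesgue point one already knows the average difference quotient converges to $\mathcal{K}'(x)$. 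Both approaches consume $\gamma>1$ at the same place (convergence of $\sum_k\mathcal{Z}_\gamma(2^{-k}t)$), and both yield the same modulus $\mathcal{P}_\gamma$.
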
%
\begin{proof}
We give only the sketch of proof, the details will be left to the reader.
According to  Theorem \ref{WZorginal}   the function $\mathcal{K}$ is
at least absolute continuous on $I$ in the case of $\gamma>1.$
Hence $\mathcal{K'}$ exists almost everywhere and $\mathcal{K}$ is an
indefinite integral of $\mathcal{K'}.$ To prove the theorem
we take any points $\xi,\eta \in I$ that are Lebesgue points of $\mathcal{K'}$
  and using the same manner as in \cite{Zyg} (p. 44), we obtain a uniform estimate for
$|\mathcal{K'}(\xi)-\mathcal{K'}(\eta)|$.
 Hence we show  $\mathcal{K'}$
is uniformly continuous  and satisfies
$$
|\mathcal{K'}(\xi)-\mathcal{K'}(\eta)|\leq C\cdot\mathcal{P_{\gamma}}(|\xi-\eta|)
$$
on its set of Lebesgue points.
Thus, it can be continuously extended to whole interval $I.$
\end{proof}

\subsection{The distortion of interval and relative coordinate}
In this subsection we introduce the distortion
of interval $I=[a, b]$ with respect to continuous and monotone function $\mathcal{K}: I\rightarrow\mathbb{R}.$
We obtain some estimates  for the distortion of interval.
After that  following Vul \& Khanin \cite{VK1990}, we define the relative coordinate of interval
 and  provide relations between distortion and relative coordinate.
These estimates  and relations 
will be used in the proofs of  main theorems.
The \emph{ distortion} of the  interval $I$ with respect to
$\mathcal{K}$ is
$$
\mathcal{R}(I; \mathcal{K})=\frac{|\mathcal{K}(I)|}{|I|}.
$$
The distortion is multiplicative with respect to composition.
 Henceforth,  take any $x\in [a,b]$ and consider  the
 distortions
\begin{equation}\label{ok5}
\mathcal{R}_{a}(x):=\mathcal{R}([a,x]; \mathcal{K})\,\,\,\,\,\text{and}\,\,\,\,\, \mathcal{R}_{b}(x):=\mathcal{R}([x,b]; \mathcal{K}).
\end{equation}
Below we study the distortions $\mathcal{R}_{a}(x)$ and  $\mathcal{R}_{b}(x)$ as the functions of $x\in [a,b].$
 Consider the following function
$\Omega:(0,1)\times (0, +\infty)\rightarrow \mathbb{R},$
$$
\Omega(\delta,\gamma)=\left\{
                \begin{array}{ll}
\delta(\log\frac{1}{\delta})^{1-\gamma} \,\,\,\,\, \text{if} \,\,\,\, (\delta,\gamma)\in (0,1)\times (0,1);\\
 \\
\delta(\log\log\frac{1}{\delta})\,\,\,\,\, \text{if} \,\,\,\, (\delta,\gamma)\in (0,1)\times \{1\};\\
\\
 \delta\,\,\,\,\,\, \,\,\,\,\,\,\,\,\,\,\,\,\,\,\,\,\,\,\,\,\,\,\,\,\,\text{if}\,\,\,\, (\delta,\gamma)\in (0,1)\times (1, +\infty).
        \end{array}
              \right.
$$
In fact the function $\Omega(\delta,\gamma)$ is the modulus of continuity of
the functions satisfying relation (\ref{ok1111}) for the different cases of $\gamma.$

Denote by $\mathrm{D}^{1+\mathcal{Z}_{\gamma}}(I)$
the class of diffeomorphisms $\mathcal{K}$ whose  derivatives  $\mathcal{K}'$
satisfy the inequality (\ref{ok1111}) on  $I.$
In the sequel we prove several lemmas which will be used in the proofs of main theorems.
\begin{lemm}\label{raiovahosila}
Let $\mathcal{K}\in \mathrm{D}^{1+\mathcal{Z}_{\gamma}}(I)$ and $\gamma\in (0,+\infty).$  Then we have
$$
\frac{\mathcal{R}_{a}(x)}{\mathcal{R}_{b}(x)}-1=\frac{\mathcal{K}'(a)-\mathcal{K}'(b)}{2\mathcal{K}'(b)}+
\mathcal{O}\Big(|I|\cdot\mathcal{Z}_{\gamma}(|I|)+|\mathcal{K}'(a)-\mathcal{K}'(b)|\cdot\Omega(|I|,\gamma)\Big).
$$
\end{lemm}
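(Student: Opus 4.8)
The plan is to approximate $\mathcal K'$ on $I$ by the affine function interpolating it at the endpoints, reduce both $\mathcal R_a(x)$ and $\mathcal R_b(x)$ to values of that chord, and then divide. Write $L:=|I|=b-a$, assume without loss of generality that $\mathcal K$ is increasing, so that $\mathcal R_a(x)=\frac1{x-a}\int_a^x\mathcal K'(t)\,dt$ and $\mathcal R_b(x)=\frac1{b-x}\int_x^b\mathcal K'(t)\,dt$, and let $\ell$ be the affine map with $\ell(a)=\mathcal K'(a)$, $\ell(b)=\mathcal K'(b)$. The crux is the chord estimate
$$\sup_{t\in I}\bigl|\mathcal K'(t)-\ell(t)\bigr|=\mathcal O\bigl(L\,\mathcal Z_\gamma(L)\bigr).$$
I would prove it by bisection applied to $h:=\mathcal K'-\ell$: since $\ell$ is affine we have $\Delta^2h(\cdot,\tau)=\Delta^2\mathcal K'(\cdot,\tau)$, hence $|\Delta^2h(\xi,\tau)|\le C\tau\mathcal Z_\gamma(\tau)$ by (\ref{ok1111}), while $h(a)=h(b)=0$. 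Evaluating $h$ at the midpoint through $h(\mathrm{mid})=\tfrac12\bigl(h(\mathrm{left})+h(\mathrm{right})\bigr)-\tfrac12\Delta^2h(\mathrm{mid},\cdot)$ and iterating on the two halves, one bounds $|h|$ at each dyadic point of $I$ by a summable series whose terms are controlled using the monotonicity of $\mathcal Z_\gamma$ on $(0,1)$; the sum is $\mathcal O(L\mathcal Z_\gamma(L))$, and the bound extends to all of $I$ since $\mathcal K'$ is continuous.

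Granting this, and using that the mean of an affine function over an interval is its value at the midpoint, one obtains $\mathcal R_a(x)=\ell\bigl(\tfrac{a+x}2\bigr)+\mathcal O(L\mathcal Z_\gamma(L))$ and $\mathcal R_b(x)=\ell\bigl(\tfrac{x+b}2\bigr)+\mathcal O(L\mathcal Z_\gamma(L))$. Since $\ell\bigl(\tfrac{a+x}2\bigr)-\ell\bigl(\tfrac{x+b}2\bigr)=\ell'\cdot\tfrac{a-b}2=\tfrac12\bigl(\mathcal K'(a)-\mathcal K'(b)\bigr)$, subtracting gives
$$\mathcal R_a(x)-\mathcal R_b(x)=\frac{\mathcal K'(a)-\mathcal K'(b)}2+\mathcal O\bigl(L\mathcal Z_\gamma(L)\bigr),$$
which, reassuringly, no longer depends on $x$. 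Also $\ell\bigl(\tfrac{x+b}2\bigr)-\mathcal K'(b)=\ell'\cdot\tfrac{x-b}2$ has absolute value at most $\tfrac12|\mathcal K'(a)-\mathcal K'(b)|$, so $\mathcal R_b(x)=\mathcal K'(b)+\mathcal O\bigl(|\mathcal K'(a)-\mathcal K'(b)|+L\mathcal Z_\gamma(L)\bigr)$, and $\mathcal R_b(x)\ge\inf_I\mathcal K'>0$ because $\mathcal K$ is a diffeomorphism, whence $\tfrac1{\mathcal R_b(x)}=\tfrac1{\mathcal K'(b)}+\mathcal O\bigl(|\mathcal K'(a)-\mathcal K'(b)|+L\mathcal Z_\gamma(L)\bigr)$. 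Multiplying the two preceding estimates,
$$\frac{\mathcal R_a(x)}{\mathcal R_b(x)}-1=\frac{\mathcal R_a(x)-\mathcal R_b(x)}{\mathcal R_b(x)}=\frac{\mathcal K'(a)-\mathcal K'(b)}{2\mathcal K'(b)}+\mathcal O\Bigl(L\mathcal Z_\gamma(L)+|\mathcal K'(a)-\mathcal K'(b)|^2+|\mathcal K'(a)-\mathcal K'(b)|\,L\mathcal Z_\gamma(L)\Bigr),$$
having absorbed $(L\mathcal Z_\gamma(L))^2$ into $L\mathcal Z_\gamma(L)$.

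It then remains to fit the last two error terms into $\mathcal O\bigl(|\mathcal K'(a)-\mathcal K'(b)|\,\Omega(L,\gamma)\bigr)$. Comparing the defining formulas, $L\mathcal Z_\gamma(L)\le\Omega(L,\gamma)$ once $L$ is smaller than a fixed absolute constant (for $L$ bounded away from $0$ the lemma is trivial, its right-hand side being bounded below), which handles the term $|\mathcal K'(a)-\mathcal K'(b)|\,L\mathcal Z_\gamma(L)$. For $|\mathcal K'(a)-\mathcal K'(b)|^2$ it is enough to show $|\mathcal K'(a)-\mathcal K'(b)|=\mathcal O(\Omega(L,\gamma))$: when $\gamma\in(0,1]$ this is exactly Theorem \ref{modofcon} applied to $\mathcal K'$, and when $\gamma>1$ Theorem \ref{WZ} applied to $\mathcal K'$ gives $\mathcal K'\in C^1(I)$, so $\mathcal K'$ is Lipschitz on the compact interval $I$ and $|\mathcal K'(a)-\mathcal K'(b)|\le\|(\mathcal K')'\|_{C^0(I)}\,L=\mathcal O(\Omega(L,\gamma))$. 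The main obstacle is the chord estimate of the first step; the case split $\gamma\le1$ versus $\gamma>1$ in this last paragraph is the only place where the exact shape of $\Omega$ and the Zygmund regularity theorems are needed, and all remaining work is routine manipulation of $\mathcal O$-terms, with implied constants depending only on $\mathcal K$.
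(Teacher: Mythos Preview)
Your proof is correct. It differs from the paper's in the choice of linear approximation to $\mathcal K'$: the paper uses the \emph{local} trapezoidal estimates
\[
\mathcal R_a(x)=\frac{\mathcal K'(a)+\mathcal K'(x)}{2}+\mathcal O\bigl(|I|\,\mathcal Z_\gamma(|I|)\bigr),\qquad
\mathcal R_b(x)=\frac{\mathcal K'(x)+\mathcal K'(b)}{2}+\mathcal O\bigl(|I|\,\mathcal Z_\gamma(|I|)\bigr)
\]
(cited from \cite{MeloBook}), so that the value $\mathcal K'(x)$ survives in the ratio and is then replaced by $\mathcal K'(b)$ at a cost of $|\mathcal K'(a)-\mathcal K'(b)|\cdot|\mathcal K'(x)-\mathcal K'(b)|=\mathcal O\bigl(|\mathcal K'(a)-\mathcal K'(b)|\,\Omega(|I|,\gamma)\bigr)$ via Theorems~\ref{modofcon} and~\ref{WZ}. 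You instead use the \emph{global} chord $\ell$ on $[a,b]$, establish $\|\mathcal K'-\ell\|_{C^0(I)}=\mathcal O(L\,\mathcal Z_\gamma(L))$ directly by dyadic bisection (this estimate is essentially what the paper later proves as Lemma~\ref{qavariqlik1} together with the remark preceding Corollary~\ref{corqavariq}), and then manipulate midpoint values of $\ell$. Your route generates the extra error term $|\mathcal K'(a)-\mathcal K'(b)|^2$, which you absorb using $|\mathcal K'(a)-\mathcal K'(b)|=\mathcal O(\Omega(L,\gamma))$; this is the same appeal to Theorems~\ref{modofcon} and~\ref{WZ} as in the paper, only applied at the endpoints rather than at an interior point. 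Both approaches rest on the same bisection mechanism; yours is self-contained, while the paper's is shorter because the trapezoidal estimate is outsourced to \cite{MeloBook}.
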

 \begin{proof}
Since $\mathcal{K}'$ satisfies (\ref{ok1111}), similarly as in \cite{MeloBook} (p. 293)  we have
$$
\mathcal{R}_{a}(x)=\frac{\mathcal{K}'(x)+\mathcal{K}'(a)}{2}+\mathcal{O}\Big(|I|\cdot\mathcal{Z}_{\gamma}(|I|)\Big),
\,\,\,
\mathcal{R}_{b}(x)=\frac{\mathcal{K}'(b)+\mathcal{K}'(x)}{2}+\mathcal{O}\Big(|I|\cdot\mathcal{Z}_{\gamma}(|I|)\Big).
$$
The last two relations imply
$$
\frac{\mathcal{R}_{a}(x)}{\mathcal{R}_{b}(x)}-1=\frac{\mathcal{K}'(a)-\mathcal{K}'(b)}{\mathcal{K}'(b)+\mathcal{K}'(x)}+
\mathcal{O}\Big(|I|\cdot\mathcal{Z}_{\gamma}(|I|)\Big).
$$
It is obvious
$$
\frac{\mathcal{K}'(a)-\mathcal{K}'(b)}{\mathcal{K}'(b)+\mathcal{K}'(x)}=
\frac{\mathcal{K}'(a)-\mathcal{K}'(b)}{2\mathcal{K}'(b)}+
|\mathcal{K}'(a)-\mathcal{K}'(b)|\mathcal{O}\Big(|\mathcal{K}'(x)-\mathcal{K}'(b)|\Big).
$$
Hence, the claim of lemma follows  from   Theorems \ref{modofcon}, \ref{WZ} and above relations.
\end{proof}
Note that  according to Theorem \ref{WZ} the function  $\mathcal{K}'$ is
 differentiable   in the case of $\gamma>1.$  Hence
 we have the following. %
\begin{cor}\label{ratiovahosilarem}
Let $\mathcal{K}\in \mathrm{D}^{1+\mathcal{Z}_{\gamma}}(I)$ for some $\gamma\in (1,+\infty).$  Then we have
$$
\frac{\mathcal{R}_{a}(x)}{\mathcal{R}_{b}(x)}-1=-\int_{a}^{b}\frac{\mathcal{K}''(y)}{2\mathcal{K}'(y)}dy+
\mathcal{O}\Big(|I|\cdot\mathcal{Z}_{\gamma}(|I|)+
|\mathcal{K}'(a)-\mathcal{K}'(b)|\cdot\Omega(|I|,\gamma)\Big).
$$
\end{cor}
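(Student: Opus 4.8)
The plan is to deduce the corollary directly from Lemma~\ref{raiovahosila} by rewriting the finite-difference term $\dfrac{\mathcal{K}'(a)-\mathcal{K}'(b)}{2\mathcal{K}'(b)}$ as the integral $-\int_a^b\dfrac{\mathcal{K}''(y)}{2\mathcal{K}'(y)}\,dy$ up to an error of the order already allowed in that lemma.

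First I would record what the hypothesis $\gamma>1$ buys us. Since $\mathcal{K}\in\mathrm{D}^{1+\mathcal{Z}_{\gamma}}(I)$ means exactly that $\mathcal{K}'$ satisfies (\ref{ok1111}), Theorem~\ref{WZ} applied to $\mathcal{K}'$ gives $\mathcal{K}'\in C^{1}(I)$, i.e. $\mathcal{K}\in C^{2}(I)$, with $\mathcal{K}''$ continuous and hence bounded on the compact interval $I$; in particular $\mathcal{K}'$ is Lipschitz on $I$, and, $\mathcal{K}$ being a diffeomorphism, $\mathcal{K}'$ is bounded away from $0$ on $I$. Consequently the quantity $\int_a^b\mathcal{K}''(y)/(2\mathcal{K}'(y))\,dy$ is well defined, and by the fundamental theorem of calculus $\mathcal{K}'(a)-\mathcal{K}'(b)=-\int_a^b\mathcal{K}''(y)\,dy$, so that
$$
\frac{\mathcal{K}'(a)-\mathcal{K}'(b)}{2\mathcal{K}'(b)}=-\int_a^b\frac{\mathcal{K}''(y)}{2\mathcal{K}'(b)}\,dy=-\int_a^b\frac{\mathcal{K}''(y)}{2\mathcal{K}'(y)}\,dy-\int_a^b\mathcal{K}''(y)\cdot\frac{\mathcal{K}'(y)-\mathcal{K}'(b)}{2\mathcal{K}'(b)\mathcal{K}'(y)}\,dy .
$$
Using $|\mathcal{K}''(y)|/(2\mathcal{K}'(b)\mathcal{K}'(y))\le C$ on $I$ and the Lipschitz bound $|\mathcal{K}'(y)-\mathcal{K}'(b)|\le\|\mathcal{K}''\|_{L^{\infty}(I)}\,|I|$, the last integral is $\mathcal{O}(|I|^{2})$.

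Finally I would observe that $|I|^{2}=\mathcal{O}\bigl(|I|\cdot\mathcal{Z}_{\gamma}(|I|)\bigr)$ uniformly for $|I|\in(0,1)$, since $t\mapsto t\,(\log\frac{1}{t})^{\gamma}$ extends continuously to $[0,1]$ and is therefore bounded there; hence the extra $\mathcal{O}(|I|^{2})$ term is absorbed into the first error term already present in Lemma~\ref{raiovahosila}. Substituting the displayed identity into the conclusion of Lemma~\ref{raiovahosila} then yields precisely the claimed expansion. There is essentially no obstacle beyond the routine bookkeeping of error terms; the only place where $\gamma>1$ is genuinely needed is the appeal to Theorem~\ref{WZ}, which guarantees that $\mathcal{K}''$ exists everywhere and is continuous, giving meaning to the integral $\int_a^b\mathcal{K}''(y)/(2\mathcal{K}'(y))\,dy$.
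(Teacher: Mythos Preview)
Your argument is correct and follows exactly the route the paper intends: invoke Theorem~\ref{WZ} to obtain $\mathcal{K}\in C^{2}(I)$, then replace the finite difference in Lemma~\ref{raiovahosila} by the integral, absorbing the $\mathcal{O}(|I|^{2})$ discrepancy into the existing error term. The paper itself gives no details beyond the remark that $\mathcal{K}'$ is differentiable when $\gamma>1$, so your write-up is in fact more complete than the original.
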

Now we define the \emph{relative coordinate} $z:I\rightarrow [0,1]$ as follows
 $$
 z=\frac{b-x}{b-a}.
 $$
Next we prove the following.
\begin{lemm}\label{qavariqlik}
Let $\mathcal{K}\in \mathrm{D}^{1+\mathcal{Z}_{\gamma}}(I)$ and $\gamma\in (0,+\infty).$  Then we have
\begin{equation}\label{21a}
(x-a)(b-x)\Big(\frac{\mathcal{R}'_{b}(x)-\mathcal{R}'_{a}(x)}{b-a}\Big)=
\frac{1}{2}\Big(z\mathcal{K}'(a)+(1-z)\mathcal{K}'(b)-\mathcal{K}'(x)\Big)+\mathcal{O}\Big(|I|\cdot\mathcal{Z}_{\gamma}(|I|)\Big).
\end{equation}
\end{lemm}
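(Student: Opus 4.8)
The plan is to reduce the left-hand side of (\ref{21a}) to an affine combination of the two distortions $\mathcal{R}_{a}(x)$ and $\mathcal{R}_{b}(x)$, which were already estimated inside the proof of Lemma \ref{raiovahosila}, and then substitute those estimates.

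First I would record two elementary differentiation identities valid on the open interval $(a,b)$. Writing $\mathcal{R}_{a}(x)=(\mathcal{K}(x)-\mathcal{K}(a))/(x-a)$ and differentiating gives $(x-a)\mathcal{R}'_{a}(x)=\mathcal{K}'(x)-\mathcal{R}_{a}(x)$; likewise, from $\mathcal{R}_{b}(x)=(\mathcal{K}(b)-\mathcal{K}(x))/(b-x)$ one gets $(b-x)\mathcal{R}'_{b}(x)=\mathcal{R}_{b}(x)-\mathcal{K}'(x)$. Multiplying the first by $(b-x)$, the second by $(x-a)$, subtracting, and dividing by $(b-a)$, and using $z=(b-x)/(b-a)$ and $1-z=(x-a)/(b-a)$, I obtain the exact identity
$$
(x-a)(b-x)\frac{\mathcal{R}'_{b}(x)-\mathcal{R}'_{a}(x)}{b-a}=z\,\mathcal{R}_{a}(x)+(1-z)\,\mathcal{R}_{b}(x)-\mathcal{K}'(x).
$$
So the assertion of the lemma is equivalent to showing that the right-hand side of this identity equals the right-hand side of (\ref{21a}).

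Next I would invoke the trapezoidal-type bounds obtained in the proof of Lemma \ref{raiovahosila}, namely $\mathcal{R}_{a}(x)=\frac{1}{2}(\mathcal{K}'(x)+\mathcal{K}'(a))+\mathcal{O}(|I|\cdot\mathcal{Z}_{\gamma}(|I|))$ and $\mathcal{R}_{b}(x)=\frac{1}{2}(\mathcal{K}'(b)+\mathcal{K}'(x))+\mathcal{O}(|I|\cdot\mathcal{Z}_{\gamma}(|I|))$, which follow only from the Zygmund bound (\ref{ok1111}) together with the monotonicity of $\mathcal{Z}_{\gamma}$ near $0$ (so that the error coming from the subinterval $[a,x]$ or $[x,b]$ is absorbed into $\mathcal{O}(|I|\cdot\mathcal{Z}_{\gamma}(|I|))$, uniformly in $x\in I$). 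Substituting these into the identity above and using $z+(1-z)=1$ makes the $\mathcal{K}'(x)$ contributions collapse to $-\frac{1}{2}\mathcal{K}'(x)$, leaving precisely $\frac{1}{2}\big(z\mathcal{K}'(a)+(1-z)\mathcal{K}'(b)-\mathcal{K}'(x)\big)+\mathcal{O}(|I|\cdot\mathcal{Z}_{\gamma}(|I|))$, which is (\ref{21a}).

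There is essentially no hard step here; the only delicate point is bookkeeping at the endpoints $x=a,b$, where $\mathcal{R}'_{a}$ and $\mathcal{R}'_{b}$ may a priori blow up, but the prefactors $(x-a)$ and $(b-x)$ neutralize this — indeed the identities $(x-a)\mathcal{R}'_{a}(x)=\mathcal{K}'(x)-\mathcal{R}_{a}(x)$ and $(b-x)\mathcal{R}'_{b}(x)=\mathcal{R}_{b}(x)-\mathcal{K}'(x)$ show both sides of (\ref{21a}) extend continuously to $[a,b]$, so it suffices to verify the identity on $(a,b)$. I would also note that all implicit constants depend only on $\mathcal{K}$ through the Zygmund constant in (\ref{ok1111}), since $x-a\le|I|$ and $b-x\le|I|$.
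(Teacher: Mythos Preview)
Your proof is correct and follows essentially the same approach as the paper: both start from the differentiation identities $(x-a)\mathcal{R}'_{a}(x)=\mathcal{K}'(x)-\mathcal{R}_{a}(x)$ and $(b-x)\mathcal{R}'_{b}(x)=\mathcal{R}_{b}(x)-\mathcal{K}'(x)$ and then feed in the trapezoidal estimates $\mathcal{R}_{a}(x)=\tfrac{1}{2}(\mathcal{K}'(x)+\mathcal{K}'(a))+\mathcal{O}(|I|\mathcal{Z}_{\gamma}(|I|))$ and $\mathcal{R}_{b}(x)=\tfrac{1}{2}(\mathcal{K}'(b)+\mathcal{K}'(x))+\mathcal{O}(|I|\mathcal{Z}_{\gamma}(|I|))$ from Lemma~\ref{raiovahosila}. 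The only cosmetic difference is that the paper substitutes the bounds into $\mathcal{R}'_{a}$ and $\mathcal{R}'_{b}$ separately (carrying intermediate error terms weighted by $1/z$ and $1/(1-z)$) before combining, whereas you first combine into the exact identity $(x-a)(b-x)\frac{\mathcal{R}'_{b}-\mathcal{R}'_{a}}{b-a}=z\mathcal{R}_{a}+(1-z)\mathcal{R}_{b}-\mathcal{K}'(x)$ and substitute afterward; your ordering is marginally cleaner but equivalent.
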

 \begin{proof}
 By differentiating $\mathcal{R}_{a}$ and  $\mathcal{R}_{b}$ we obtain
 $$
 \mathcal{R}'_{a}(x)=\frac{\mathcal{K}'(x)-\mathcal{R}_{a}(x)}{x-a}
 \,\,\,\,\,\,\,\,\,\,\text{and}\,\,\,\,\,\,\,\,\,\,
 \mathcal{R}'_{b}(x)=\frac{\mathcal{R}_{b}(x)-\mathcal{K}'(x)}{b-x}.
 $$
Applying the same arguments as in the proof of Lemma \ref{raiovahosila},
we can show that
$$
 \mathcal{R}'_{a}(x)=
 \frac{1}{2}\cdot\frac{\mathcal{K}'(x)-\mathcal{K}'(a)}{x-a}+\frac{1}{1-z}\mathcal{O}\Big(\mathcal{Z}_{\gamma}(|I|)\Big),\,\,\,
 \mathcal{R}'_{b}(x)=
 \frac{1}{2}\cdot\frac{\mathcal{K}'(b)-\mathcal{K}'(x)}{b-x}+\frac{1}{z}\mathcal{O}\Big(\mathcal{Z}_{\gamma}(|I|)\Big).
 $$
 Hence
$$
(x-a)(b-x)\Big(\frac{\mathcal{R}'_{b}(x)-\mathcal{R}'_{a}(x)}{b-a}\Big)=
\frac{1}{2}\Big(z\mathcal{K}'(a)+(1-z)\mathcal{K}'(b)-\mathcal{K}'(x)\Big)+\mathcal{O}\Big(|I|\cdot\mathcal{Z}_{\gamma}(|I|)\Big).
$$
 Lemma \ref{qavariqlik} is proved.
\end{proof}
Next we estimate the expression in the right hand side of equation (\ref{21a}).
For this we define the following function $T_{\gamma}:[0, 1/2]\times (0, 1)\rightarrow \mathbb{R}$
as
$$
T_{\gamma}(s, t)=s\int_{s}^{1}\frac{\mathcal{Z}_{\gamma}(xt)dx}{x}+
\int_{0}^{s}\mathcal{Z}_{\gamma}(xt)dx \,\,\,\,\text{if}\,\,\,s\in (0,1/2]
$$
and $T_{\gamma}(0, t)=0,$  for any $t\in (0,1).$
\begin{lemm}\label{qavariqlik1}
Let $\mathcal{K}\in \mathrm{D}^{1+\mathcal{Z}_{\gamma}}(I)$ and  $\gamma\in (0,+\infty).$
 Then there exists a constant $C>0$ such that
$$
|z\mathcal{K}'(a)+(1-z)\mathcal{K}'(b)-\mathcal{K}'(x)|\leq
C\left\{
                \begin{array}{ll}
|I|\cdot T_{\gamma}(z, |I|) \,\,\,\,\,\,\,\,\ \text{if} \,\,\,\, z\in [0, \frac{1}{2}];\\
\\
|I|\cdot T_{\gamma}(1-z, |I|) \,\,\,\,\text{if}\,\,\,\, z\in (\frac{1}{2}, 1].
        \end{array}
              \right.
$$
\end{lemm}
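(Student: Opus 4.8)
Write $L=|I|$ and $d=\min\{x-a,\ b-x\}$, so that $d=zL$ when $z\le 1/2$ and $d=(1-z)L$ when $z\ge 1/2$. Let $\ell$ be the affine function interpolating $\mathcal{K}'$ at $a$ and $b$; then $z\mathcal{K}'(a)+(1-z)\mathcal{K}'(b)=\ell(x)$, so the quantity to be bounded is precisely $|\psi(x)|$ with $\psi:=\mathcal{K}'-\ell$. The function $\psi$ vanishes at the endpoints of $I$, and since $\Delta^{2}\ell(\cdot,\tau)\equiv 0$ it satisfies the same Zygmund bound $\|\Delta^{2}\psi(\cdot,\tau)\|_{L^{\infty}(I)}\le C\tau\mathcal{Z}_{\gamma}(\tau)$, $\tau\in[0,L/2]$, as $\mathcal{K}'$. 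A substitution $t=sL$ in the definition of $T_{\gamma}$ gives
\[
L\,T_{\gamma}(d/L,\,L)=d\int_{d}^{L}\frac{\mathcal{Z}_{\gamma}(t)}{t}\,dt+\int_{0}^{d}\mathcal{Z}_{\gamma}(t)\,dt ,
\]
and this right-hand side equals $|I|\,T_{\gamma}(z,|I|)$ when $z\le 1/2$ and $|I|\,T_{\gamma}(1-z,|I|)$ when $z\ge 1/2$. Hence the whole lemma reduces to the single estimate
\[
|\psi(x)|\le C\Big(d\int_{d}^{L}\frac{\mathcal{Z}_{\gamma}(t)}{t}\,dt+\int_{0}^{d}\mathcal{Z}_{\gamma}(t)\,dt\Big).
\]
I will treat the case $z\le 1/2$; the case $z>1/2$ follows by interchanging the roles of $a$ and $b$, and $z\in\{0,1\}$ is trivial since then $\psi(x)=0=T_{\gamma}(0,|I|)$.

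The plan is to run the dyadic bisection argument that underlies Theorem \ref{modofcon} (cf. \cite{Zyg}, p.~44). Let $J_{0}=I\supset J_{1}\supset J_{2}\supset\cdots$, where $J_{k+1}=[p_{k+1},q_{k+1}]$ is the half of $J_{k}=[p_{k},q_{k}]$ that contains $x$; thus $|J_{k}|=L2^{-k}$ and $J_{k}\downarrow\{x\}$. Let $m_{k}$ be the midpoint of $J_{k}$ and $\lambda_{k}$ the value at $x$ of the affine interpolant of $\psi$ at the endpoints of $J_{k}$. Then $\lambda_{0}=0$ (because $\psi$ vanishes at $a$ and $b$) and $\lambda_{k}\to\psi(x)$ by continuity, so $\psi(x)=\sum_{k\ge 0}(\lambda_{k+1}-\lambda_{k})$. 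A short computation, separating the two cases of which half of $J_{k}$ is kept, gives the exact identity
\[
\lambda_{k+1}-\lambda_{k}=-\frac{\theta_{k}}{2}\,\Delta^{2}\psi\big(m_{k},\,|J_{k}|/2\big),\qquad \theta_{k}:=\frac{\mathrm{dist}(x,e_{k})}{|J_{k+1}|}\in[0,1],
\]
where $e_{k}$ is the endpoint of $J_{k}$ retained in $J_{k+1}$. Combining this with the Zygmund bound for $\psi$ and with $\mathcal{Z}_{\gamma}(|J_{k}|/2)\asymp\mathcal{Z}_{\gamma}(|J_{k}|)$ yields $|\lambda_{k+1}-\lambda_{k}|\le C\,\theta_{k}\,|J_{k}|\,\mathcal{Z}_{\gamma}(|J_{k}|)$.

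It remains to sum this series, and this rests on one structural observation. The distances $q_{k}-x$ and $x-p_{k}$ are nonincreasing in $k$, while a bisection step keeping the \emph{left} half of $J_{k}$ forces $q_{k}-x\ge|J_{k}|/2$; since $q_{k}-x\le q_{0}-x=b-x=d$ throughout (this is where $z\le 1/2$ enters), such a step can happen only once $|J_{k}|\le 2d$. Therefore, as long as $|J_{k}|>2d$ every step keeps the right half, $e_{k}=q_{k}=b$, $\mathrm{dist}(x,e_{k})=d$, so $\theta_{k}|J_{k}|=2d$. Splitting the series at the first index $K$ with $|J_{K}|\le 2d$, the head is at most $C\,d\sum_{k<K}\mathcal{Z}_{\gamma}(L2^{-k})$ and the tail at most $C\sum_{k\ge K}|J_{k}|\,\mathcal{Z}_{\gamma}(|J_{k}|)$; since $\mathcal{Z}_{\gamma}(t)=(\log\frac{1}{t})^{-\gamma}$ is increasing and satisfies $\mathcal{Z}_{\gamma}(t/2)\asymp\mathcal{Z}_{\gamma}(t)$ for $t$ small (which may be assumed, the estimate being needed only for small fundamental segments), the standard comparison of these geometric sums with integrals, together with $L2^{-K}\asymp d$, bounds them by $C\,d\int_{d}^{L}\mathcal{Z}_{\gamma}(t)t^{-1}\,dt$ and $C\int_{0}^{d}\mathcal{Z}_{\gamma}(t)\,dt$ respectively. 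This gives the required estimate for $|\psi(x)|$. I expect the main obstacle to be precisely this final stage --- justifying the structural claim about which side the bisection proceeds on and making the sum-to-integral comparisons uniform in $z$ and $|I|$; the rest is a routine application of the second symmetric difference hypothesis.
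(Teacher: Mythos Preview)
Your proposal is correct and follows essentially the same route as the paper's proof: the paper rescales to $[0,1]$ via $\kappa(z)=\mathcal{K}'(b+z(a-b))$, takes the nested dyadic intervals $J_\ell$ containing the point, writes the telescoping identity $\mathfrak{r}_{\ell+1}-\mathfrak{r}_\ell=-\frac{\mathrm{dist}(z,\partial J_\ell)}{|J_{\ell+1}|}\,\mathfrak{t}_\ell$ (which is your $\lambda_{k+1}-\lambda_k=-\tfrac{\theta_k}{2}\Delta^2\psi(m_k,|J_k|/2)$ after noting $\Delta^2\psi=2\mathfrak{t}$), and then splits the sum at the index $m$ with $2^{-(m+1)}<z\le 2^{-m}$, which is exactly your $K$. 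Your ``structural observation'' that bisection keeps the right half while $|J_k|>2d$ is precisely the statement that $J_\ell=[0,2^{-\ell}]$ for $\ell\le m$ in the paper's rescaled picture; the subsequent bounds $\mathrm{dist}(z,\partial J_\ell)\le z$ for $\ell\le m$ and $\le|J_\ell|$ for $\ell>m$, and the sum-to-integral comparison, match yours.
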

\begin{proof}
Let us consider  the function $\kappa(z):=\mathcal{K}'(b+z(a-b)),$ $z\in [0, 1].$
It is clear that, to prove the lemma we have to estimate
$|(1-z)\kappa(0)+z\kappa(1)-\kappa(z)|.$
Note that, since $\mathcal{K}'$ satisfies (\ref{ok1111}) we have
\begin{equation}\label{ok16}
  |\frac{1}{2}\kappa(\xi+\tau)+\frac{1}{2}\kappa(\xi-\tau)-\kappa(\xi)|
\leq C\cdot|\tau||I|\mathcal{Z}_{\gamma}(|\tau||I|),
\end{equation}
for all $\xi+\tau,$ $\xi-\tau \in [0, 1].$
Denote by $\mathcal{D}_{\ell},$ $\ell=0,1,2,...$ the dyadic partition of $[0,1].$
Take any $z\in (0, 1),$ fix it and denote by $J_{\ell}=[\mathfrak{a}_{\ell}, \mathfrak{b}_{\ell})$ the dyadic interval of
$\mathcal{D}_{\ell}$ which contains $z.$ Using this interval we define the following quantities
$$
\mathfrak{r}_{\ell}=\frac{\mathfrak{b}_{\ell}-z}{\mathfrak{b}_{\ell}-\mathfrak{a}_{\ell}}\kappa(\mathfrak{a}_{\ell})+\frac{z-\mathfrak{a}_{\ell}}
{\mathfrak{b}_{\ell}-\mathfrak{a}_{\ell}}\kappa(\mathfrak{b}_{\ell})-\kappa(z),\,\,\,\,\,
\mathfrak{t}_{\ell}=\frac{1}{2}\kappa(\mathfrak{a}_{\ell})+\frac{1}{2}\kappa(\mathfrak{b}_{\ell})-\kappa(\frac{\mathfrak{a}_{\ell}+\mathfrak{b}_{\ell}}{2}).
$$
After an easy computation we get
$$
\mathfrak{r}_{\ell+1}-\mathfrak{r}_{\ell}=-\frac{\mathrm{dist}(z, \partial(J_{\ell}))}{|J_{\ell+1}|}\mathfrak{t}_{\ell}.
$$
Using this equality we obtain
$$
|\mathfrak{r_{0}}|\leq\sum_{\ell=0}^{\infty}\frac{\mathrm{dist}(z, \partial(J_{\ell}))}{|J_{\ell+1}|}|\mathfrak{t}_{\ell}|.
$$
There are  two possibilities for $z:$
either  $z\in (0,\frac{1}{2}]$ or $z\in (\frac{1}{2}, 1).$ Consider the first case.
Let   $m$ be the biggest natural number  such that $z\leq 2^{-m}.$  It is easy to verify  that
$$
\mathrm{dist}(z, \partial(J_{\ell}))\leq
\left\{
  \begin{array}{ll}
    z, & \hbox{if} \,\,\,\,\, \ell\leq m;\\
    |J_{\ell}|, & \hbox{if} \,\,\,\,\, \ell>m.
  \end{array}
\right.
$$
Similarly, in the second  case we define the biggest natural number $p$
 such that $1-z \leq 2^{-p}.$ One can verify that
$$
\mathrm{dist}(z, \partial(J_{\ell}))\leq
\left\{
  \begin{array}{ll}
    1-z, & \hbox{if} \,\,\,\,\, \ell\leq p;\\
    |J_{\ell}|, & \hbox{if} \,\,\,\,\, \ell>p.
  \end{array}
\right.
$$
Consequently,
 $$
|\mathfrak{r_{0}}|\leq \left\{
                         \begin{array}{ll}
                           z\underset{\ell=0}{\overset{m}{\sum}}2^{\ell+1}|\mathfrak{t}_{\ell}|+2\underset{\ell=m+1}{\overset{\infty}{\sum}}|\mathfrak{t}_{\ell}|, & \hbox{if}\,\,\,\,\,\,z\in(0,\frac{1}{2}]; \\
\\
                           (1-z)\underset{\ell=0}{\overset{p}{\sum}}2^{\ell+1}|\mathfrak{t}_{\ell}|+2\underset{\ell=p+1}{\overset{\infty}{\sum}}|\mathfrak{t}_{\ell}|, & \hbox{if}\,\,\,\,\,\,z\in(\frac{1}{2}, 1).
                         \end{array}
                     \right.
$$
Using inequality (\ref{ok16}) we get
$$
|\mathfrak{r_{0}}|\leq C \left\{
                         \begin{array}{ll}
                           |I|\Big(z\underset{\ell=0}{\overset{m}{\sum}}\mathcal{Z}_{\gamma}(2^{-\ell-1}|I|)
+\underset{\ell=m+1}{\overset{\infty}{\sum}}2^{-\ell}\mathcal{Z}_{\gamma}(2^{-\ell-1}|I|)\Big), & \hbox{if}\,\,\,\,\,\,z\in(0,\frac{1}{2}]; \\
\\
               |I|\Big((1-z)\underset{\ell=0}{\overset{p}{\sum}}\mathcal{Z}_{\gamma}(2^{-\ell-1}|I|)
+\underset{\ell=p+1}{\overset{\infty}{\sum}}2^{-\ell}\mathcal{Z}_{\gamma}(2^{-\ell-1}|I|)\Big), & \hbox{if}\,\,\,\,\,\,z\in(\frac{1}{2},1).
                         \end{array}
                       \right.
$$
It is obvious that if $z\in(0,\frac{1}{2}]$ then
$$
z\underset{\ell=0}{\overset{m}{\sum}}\mathcal{Z}_{\gamma}(2^{-\ell-1}|I|)
+\underset{\ell=m+1}{\overset{\infty}{\sum}}2^{-\ell}\mathcal{Z}_{\gamma}(2^{-\ell-1}|I|\leq C \cdot T(z, |I|).
$$
 Similarly, if $z\in (\frac{1}{2}, 1)$ then
$$
(1-z)\underset{\ell=0}{\overset{p}{\sum}}\mathcal{Z}_{\gamma}(2^{-\ell-1}|I|)
+\underset{\ell=p+1}{\overset{\infty}{\sum}}2^{-\ell}\mathcal{Z}_{\gamma}(2^{-\ell-1}|I|\leq C\cdot T(1-z, |I|).
$$
 Thus
$$
|\mathfrak{r_{0}}|\leq C \left\{
                         \begin{array}{ll}
                           |I|\cdot T_{\gamma}(z, |I|), & \hbox{if}\,\,\,\,\,\,z\in(0,\frac{1}{2}]; \\
\\
               |I|\cdot T_{\gamma}(1-z, |I|), & \hbox{if}\,\,\,\,\,\,z\in(\frac{1}{2},1).
                         \end{array}
                       \right.
$$
On the other hand
$$
|(1-z)\kappa(0)+z\kappa(1)-\kappa(z)|=|\mathfrak{r_{0}}|.
$$
Hence we have proved Lemma \ref{qavariqlik1} for $z\in (0, 1).$ For  $z\in \{0,1\}$
 the claim of lemma is obvious.
\end{proof}
It is easy to see that the function $T_{\gamma}(z, |I|)$ is  increasing function of $z$ on
$[0,\frac{1}{2}].$ Hence the function $T_{\gamma}(1-z, |I|)$ is decreasing  function of $z$ on
$[\frac{1}{2}, 1].$ Therefore $T_{\gamma}(z, |I|)\leq T_{\gamma}(\frac{1}{2}, |I|)$ for all
$z\in[0,\frac{1}{2}]$ and $T_{\gamma}(1-z, |I|)\leq T(\frac{1}{2}, |I|)$ for all
$z\in[\frac{1}{2},1].$ Moreover, if the length of  interval $I$ is
sufficiently small then it can be easily shown that
$$
T_{\gamma}(\frac{1}{2}, |I|)=\mathcal{O}\Big(\mathcal{Z}_{\gamma}(|I|)\Big).
$$
Thus  Lemmas \ref{qavariqlik} and \ref{qavariqlik1} imply the following.
\begin{cor}\label{corqavariq}
Let $\mathcal{K}\in \mathrm{D}^{1+\mathcal{Z}_{\gamma}}(I)$ and $\gamma\in (0,+\infty).$ If the length of
 interval $I$ is  sufficiently small  then we have
$$
(x-a)(b-x)\Big|\frac{\mathcal{R}'_{b}(x)-\mathcal{R}'_{a}(x)}{b-a}\Big|=\mathcal{O}\Big(|I|\mathcal{Z}_{\gamma}(|I|)\Big).
$$
\end{cor}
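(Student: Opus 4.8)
The plan is to deduce the statement directly from Lemmas \ref{qavariqlik} and \ref{qavariqlik1}, together with the two elementary facts about $T_\gamma$ recorded just above the corollary. First I would apply Lemma \ref{qavariqlik}, which already rewrites
\[
(x-a)(b-x)\Big(\frac{\mathcal{R}'_{b}(x)-\mathcal{R}'_{a}(x)}{b-a}\Big)
=\frac{1}{2}\Big(z\mathcal{K}'(a)+(1-z)\mathcal{K}'(b)-\mathcal{K}'(x)\Big)+\mathcal{O}\Big(|I|\mathcal{Z}_{\gamma}(|I|)\Big).
\]
Thus everything reduces to showing that the \emph{convexity defect} $z\mathcal{K}'(a)+(1-z)\mathcal{K}'(b)-\mathcal{K}'(x)$ is itself $\mathcal{O}(|I|\mathcal{Z}_{\gamma}(|I|))$, and crucially that this holds uniformly in $x\in[a,b]$, i.e. uniformly in the relative coordinate $z\in[0,1]$.

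For this, Lemma \ref{qavariqlik1} bounds the defect by $C|I|\,T_{\gamma}(z,|I|)$ when $z\in[0,\tfrac12]$ and by $C|I|\,T_{\gamma}(1-z,|I|)$ when $z\in(\tfrac12,1]$. Since $s\mapsto T_{\gamma}(s,|I|)$ is increasing on $[0,\tfrac12]$, both of these are dominated by $C|I|\,T_{\gamma}(\tfrac12,|I|)$, so the bound becomes independent of $z$. It then remains to control $T_{\gamma}(\tfrac12,|I|)$; this is the only place where an actual computation appears. I would split the integral defining $T_{\gamma}(\tfrac12,t)$ into its two pieces, observe that $\mathcal{Z}_{\gamma}$ is nondecreasing on $(0,1)$ so that $\mathcal{Z}_{\gamma}(xt)\le\mathcal{Z}_{\gamma}(t)$ for $x\le1$, and use that $\int_{1/2}^{1}dx/x=\log 2$; this yields $T_{\gamma}(\tfrac12,t)\le C\,\mathcal{Z}_{\gamma}(t)$ once $t$ is below a suitable threshold (keeping $t$ away from $1$, where $\mathcal{Z}_\gamma$ blows up, is the only reason smallness of $|I|$ is imposed). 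Consequently the convexity defect is $\mathcal{O}(|I|\mathcal{Z}_{\gamma}(|I|))$ uniformly in $x$.

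Feeding this back into the identity from Lemma \ref{qavariqlik}, the right-hand side is a sum of two $\mathcal{O}(|I|\mathcal{Z}_{\gamma}(|I|))$ terms, hence $\mathcal{O}(|I|\mathcal{Z}_{\gamma}(|I|))$; taking absolute values gives the claim. I do not expect a genuine obstacle here: the corollary is essentially the superposition of the two preceding lemmas, and no splitting into the three ranges of $\gamma$ is needed, since Lemmas \ref{qavariqlik} and \ref{qavariqlik1} are already valid for every $\gamma>0$ and the final bound $\mathcal{O}(|I|\mathcal{Z}_{\gamma}(|I|))$ does not involve $\Omega(|I|,\gamma)$. The one point to handle with a little care is the uniformity in $x$ (equivalently $z$) of the estimate for the convexity defect, which is exactly what the passage through $T_{\gamma}(\tfrac12,|I|)$ secures.
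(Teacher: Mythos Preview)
Your proposal is correct and follows essentially the same route as the paper: apply Lemma~\ref{qavariqlik} to reduce to the convexity defect, bound that via Lemma~\ref{qavariqlik1} by $C|I|\,T_\gamma(\cdot,|I|)$, use the monotonicity of $s\mapsto T_\gamma(s,|I|)$ on $[0,\tfrac12]$ to pass to $T_\gamma(\tfrac12,|I|)$, and finally verify $T_\gamma(\tfrac12,|I|)=\mathcal{O}(\mathcal{Z}_\gamma(|I|))$. Your explicit computation of the last step (bounding each integrand by $\mathcal{Z}_\gamma(t)$ via monotonicity of $\mathcal{Z}_\gamma$) is a bit more detailed than what the paper records, but the argument is the same.
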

Next we consider the subcase $\gamma\in (1, +\infty).$ Due to  Theorem \ref{WZ},
 $\mathcal{K}'$ is differentiable.
Therefore $\mathcal{R}'_{a}$ and $\mathcal{R}'_{b}$ are differentiable and we have the following.
 \begin{lemm}\label{ratiohosilaayirmasi}
Let $\mathcal{K}\in \mathrm{D}^{1+\mathcal{Z}_{\gamma}}(I)$ and $\gamma\in (1, +\infty).$  Then there exists a
constant $C>0$ such that
$$
\Big|(x-a)(b-x)\Big(\mathcal{R}''_{a}(x)-\mathcal{R}''_{b}(x)\Big)\Big|\leq C\cdot|I|\mathcal{P}_{\gamma}(|I|),
$$
$$
\Big|\mathcal{R}'_{b}(x)-\mathcal{R}'_{a}(x)\Big|\leq C\cdot\mathcal{P}_{\gamma}(|I|)
$$
where  function $\mathcal{P}_{\gamma}$ is defined in (\ref{eq6303333avgust}).
\end{lemm}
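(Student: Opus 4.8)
The plan is to first reduce to the case $\mathcal{K}\in C^{2}(I)$. Since $\mathcal{K}\in \mathrm{D}^{1+\mathcal{Z}_{\gamma}}(I)$, the function $\mathcal{K}'$ itself satisfies (\ref{ok1111}), so applying Theorem \ref{WZ} with $\mathcal{K}'$ in place of $\mathcal{K}$ shows that $\mathcal{K}'\in C^{1}(I)$, i.e.\ $\mathcal{K}\in C^{2}(I)$, and that there is a constant $C>0$ with $|\mathcal{K}''(\xi)-\mathcal{K}''(\eta)|\leq C\cdot\mathcal{P}_{\gamma}(|\xi-\eta|)$ for all $\xi,\eta\in I$. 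I would also record that $\mathcal{P}_{\gamma}$ is nondecreasing on $(0,1)$ — being a sum of the nondecreasing functions $x\mapsto\mathcal{Z}_{\gamma}(x2^{-n})$ — so that $\mathcal{P}_{\gamma}$ evaluated at any length $\leq|I|$ is $\leq\mathcal{P}_{\gamma}(|I|)$.

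Next I would write everything through the elementary integral representations $\mathcal{R}_{a}(x)=\int_{0}^{1}\mathcal{K}'(a+s(x-a))\,ds$ and $\mathcal{R}_{b}(x)=\int_{0}^{1}\mathcal{K}'(x+s(b-x))\,ds$. Differentiating under the integral sign (legitimate since $\mathcal{K}''\in C^{0}$) and changing variables gives
\[
\mathcal{R}'_{a}(x)=\frac{1}{(x-a)^{2}}\int_{a}^{x}(u-a)\,\mathcal{K}''(u)\,du,\qquad \mathcal{R}'_{b}(x)=\frac{1}{(b-x)^{2}}\int_{x}^{b}(b-u)\,\mathcal{K}''(u)\,du,
\]
and differentiating once more, using $\frac{2}{(x-a)^{3}}\int_{a}^{x}(u-a)\,du=\frac{1}{x-a}$ and its mirror identity on $[x,b]$, produces the self-cancelling formulas
\[
\mathcal{R}''_{a}(x)=\frac{2}{(x-a)^{3}}\int_{a}^{x}(u-a)\bigl(\mathcal{K}''(x)-\mathcal{K}''(u)\bigr)\,du,\qquad \mathcal{R}''_{b}(x)=\frac{2}{(b-x)^{3}}\int_{x}^{b}(b-u)\bigl(\mathcal{K}''(u)-\mathcal{K}''(x)\bigr)\,du.
\]
Bounding the integrand of the first by $C\,\mathcal{P}_{\gamma}(|I|)(u-a)$ via the modulus estimate and monotonicity of $\mathcal{P}_{\gamma}$, and using $\int_{a}^{x}(u-a)\,du=\tfrac12(x-a)^{2}$, yields $|\mathcal{R}''_{a}(x)|\leq C\,\mathcal{P}_{\gamma}(|I|)/(x-a)$; symmetrically $|\mathcal{R}''_{b}(x)|\leq C\,\mathcal{P}_{\gamma}(|I|)/(b-x)$. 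Multiplying by $(x-a)(b-x)$, applying the triangle inequality, and using $(b-x)+(x-a)=|I|$ gives the first asserted bound $(x-a)(b-x)|\mathcal{R}''_{a}(x)-\mathcal{R}''_{b}(x)|\leq C\,|I|\,\mathcal{P}_{\gamma}(|I|)$.

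For the second bound I would compare $\mathcal{R}'_{a}$ and $\mathcal{R}'_{b}$ directly. From $\mathcal{R}'_{a}(x)=\int_{0}^{1}s\,\mathcal{K}''(a+s(x-a))\,ds$ and, after the substitution $s\mapsto 1-s$, $\mathcal{R}'_{b}(x)=\int_{0}^{1}s\,\mathcal{K}''(b-s(b-x))\,ds$, subtracting gives
\[
\mathcal{R}'_{b}(x)-\mathcal{R}'_{a}(x)=\int_{0}^{1}s\,\bigl(\mathcal{K}''(b-s(b-x))-\mathcal{K}''(a+s(x-a))\bigr)\,ds.
\]
For each $s\in[0,1]$ both arguments lie in $I$, hence differ by at most $|I|$, so the integrand is $\leq s\,C\,\mathcal{P}_{\gamma}(|I|)$; integrating and using $\int_{0}^{1}s\,ds=\tfrac12$ gives $|\mathcal{R}'_{b}(x)-\mathcal{R}'_{a}(x)|\leq C\,\mathcal{P}_{\gamma}(|I|)$.

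The only genuinely delicate point is the first step: one must know $\mathcal{K}'$ is differentiable and that $\mathcal{K}''$ inherits the modulus $\mathcal{P}_{\gamma}$, which is exactly what Theorem \ref{WZ} supplies once applied to $\mathcal{K}'$ rather than $\mathcal{K}$. After that the lemma is a bookkeeping exercise; the one thing to keep track of is that $\mathcal{R}''_{a}$ and $\mathcal{R}''_{b}$ separately blow up like $(x-a)^{-1}$ and $(b-x)^{-1}$ near the endpoints, so the statement must be phrased with the weights $(x-a)(b-x)$ — the self-cancelling integral formulas above are precisely what keeps these weighted quantities bounded.
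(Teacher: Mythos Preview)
Your proof is correct and is exactly the ``easy computations and Theorem~\ref{WZ}'' that the paper invokes without spelling out; indeed, your integral formula for $\mathcal{R}''_{a}$ is the same one the paper later writes down explicitly in the proof of Lemma~\ref{ikkinchihosila2}. The only cosmetic remark is that in your second-bound computation the two evaluation points actually differ by exactly $(1-s)|I|$, so you could even record the slightly sharper intermediate bound $C\,\mathcal{P}_{\gamma}\bigl((1-s)|I|\bigr)$ before invoking monotonicity of $\mathcal{P}_{\gamma}$.
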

\begin{proof}
The proof lemma follows from easy computations and Theorem \ref{WZ}.
\end{proof}
\section{Estimates for the ratio of  $f^{q_{n}}$-distortions}
In this section we first  define relative coordinates on the intervals of
dynamical partition $\mathbb{P}_{n}$ and the ratio of $f^{q_{n}}$-distortions i.e.,
distortions of intervals with respect to $f^{q_{n}}.$  Then we
describe the ratio of  $f^{q_{n}}$-distortions by initial relative coordinates and
 provide estimates for this description and its derivatives.
  Note that the relative coordinates on intervals of dynamical partition $\mathbb{P}_{n}$
 was introduced and very well   investigated by Sinai \&  Khanin in the work \cite{KS1989}.
 Here and in the next sections we always assume the rotation number is irrational and
 we use in the following formulations:  $b-a$ where $a, b\in \mathbb{S}^{1}$ and
often the map $f:\mathbb{S}^{1}\rightarrow \mathbb{S}^{1}$ with $f(b)-f(a),$ even if correctly these had to
be replaced by  the lifts $\tilde{b}-\tilde{a}$ where $\tilde{a}, \tilde{b}\in [0,1)$
and $F:\mathbb{R}\rightarrow \mathbb{R}$ with $F(\tilde{b})-F(\tilde{a})$ respectively. Having these in mind
we introduce the relative coordinates $z_{i}:I_{i}^{n-1}\rightarrow [0,1]$ for all $0\leq i \leq q_{n}$
and $\bar{z}_{j}:I_{j}^{n}\rightarrow [0,1]$ for all  $0\leq j \leq q_{n-1},$
 by the formulae respectively:
$$
z_{i}=\frac{\xi_{i}-x}{\xi_{i}-\xi_{i+q_{n-1}}}, \,\,\,\,\, x\in I_{i}^{n-1}
\,\,\,\,\, \text{and} \,\,\,\,\,\
\bar{z}_{j}=\frac{\xi_{j+q_{n}}-y}{\xi_{j+q_{n}}-\xi_{j}}, \,\,\,\,\, y\in I_{j}^{n}.
$$
Here and in the following, we discuss only the case where $n$ is even, the case where $n$ is odd
is obtained by reversing the orientation of $\mathbb{S}^{1}$.  Next we define
\begin{equation}\label{r01}
 \widetilde{\Upsilon}_{n}(x)=\log\frac{\mathcal{R}([\xi_{q_{n-1}}, x];\,\, f^{q_{n}})}{\mathcal{R}([x, \xi_{0}]; \,\, f^{q_{n}})}
 +\log \widetilde{m}_{n},\,\,\,\,\,\,\,\,\,\, x\in I_{0}^{n-1}.
\end{equation}
 \begin{equation}\label{r02}
\widehat{\Upsilon}_{n}(x)=\log\frac{\mathcal{R}([\xi_{0}, x];\,\, f^{q_{n-1}})}{\mathcal{R}([x, \xi_{q_{n}}]; \,\, f^{q_{n-1}})}
 +\log \widehat{m}_{n},\,\,\,\,\,\,\,\,\,\, x\in I_{0}^{n}.
\end{equation}
To be easy to write we use  the following notations
 $$
 \alpha_{i}:=\xi_{i+q_{n-1}},\,\,\,\,\beta_{i}:=\xi_{i}\,\,\,\,\,
 \text{and}\,\,\,\,\,\, x_{i}:=f^{i}(x)\in I^{n-1}_{i},
\,\,\,\,\,\, \,\,\,0\leq i \leq q_{n}.
 $$
 $$
 \bar{\alpha}_{j}:=\xi_{j},\,\,\,\,\bar{\beta}_{j}:=\xi_{j+q_{n}}\,\,\,\,\,
 \text{and}\,\,\,\,\,\, y_{j}:=f^{j}(y)\in I^{n}_{j},
\,\,\,\,\,\, \,\,\,0\leq j \leq q_{n-1}.
 $$
Hence
 \begin{equation}\label{r01aaaaaa}
 x=\beta_{0}+z_{0}(\alpha_{0}-\beta_{0})\,\,\,\,\,\,\,\,\,\,\ \text{and}
 \,\,\,\,\,\,\,\,\,\,\  y=\bar{\beta}_{0}+\bar{z}_{0}(\bar{\alpha}_{0}-\bar{\beta}_{0}).
 \end{equation}
 So, we set
\begin{equation}\label{r01a}
 \widetilde{\Upsilon}_{n}(z_{0}):=\widetilde{\Upsilon}_{n}(\beta_{0}+z_{0}(\alpha_{0}-\beta_{0})),\,\,\,\,\,\,
\widehat{\Upsilon}_{n}(\bar{z}_{0}):= \widehat{\Upsilon}_{n}(\bar{\beta}_{0}+\bar{z}_{0}(\bar{\alpha}_{0}-\bar{\beta}_{0})).
\end{equation}
Below we estimate $\widetilde{\Upsilon}_{n}$ and $\widehat{\Upsilon}_{n}.$
These estimates will be used in the next section to approximate
relative coordinates with M\"{o}bius transformations.
\begin{lemm}\label{nolinchihosila}
Let $f\in \mathrm{D}^{1+\mathcal{Z}_{\gamma}}(\mathbb{S}^{1}\setminus\{\xi_{0}\})$ and $\gamma\in(0, +\infty).$
Then there exists a constant $C>0$ such that
$$
\underset{z_{0}\in [0,1]}{\max}|\widetilde{\Upsilon}_{n}(z_{0})|\leq  \frac{C}{n^{\gamma}},\,\,\,\,\,\,
\underset{\bar{z}_{0}\in [0,1]}{\max}\,\,|\widehat{\Upsilon}_{n}(\bar{z}_{0})|\leq  \frac{C}{n^{\gamma}}
$$
for all $n\geq 1.$
\end{lemm}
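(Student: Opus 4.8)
The plan is to derive both estimates from Lemma~\ref{raiovahosila}, using multiplicativity of the distortion under composition together with the near-exact cancellation of the leading terms against $\log\widetilde m_n$ (resp. $\log\widehat m_n$). I would treat $\widetilde\Upsilon_n$ in detail; the argument for $\widehat\Upsilon_n$ is identical after replacing the disjoint system $\{I_i^{n-1}\}_{0\le i<q_n}$ by $\{I_j^{n}\}_{0\le j<q_{n-1}}$, the endpoints $\alpha_i=\xi_{i+q_{n-1}},\ \beta_i=\xi_i$ by $\bar\alpha_j=\xi_j,\ \bar\beta_j=\xi_{j+q_n}$, and $\widetilde m_n$ by $\widehat m_n$.

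Fix $x=\beta_0+z_0(\alpha_0-\beta_0)\in I_0^{n-1}$. Since $f^{q_n}|_{I_0^{n-1}}=f\circ\cdots\circ f$ and the consecutive images $f^i(I_0^{n-1})=I_i^{n-1}$, $0\le i<q_n$, are pairwise disjoint atoms of $\mathbb{P}_n$ with endpoints $\alpha_i,\beta_i$, multiplicativity of the distortion yields
\[
\log\frac{\mathcal R([\xi_{q_{n-1}},x];f^{q_n})}{\mathcal R([x,\xi_0];f^{q_n})}=\sum_{i=0}^{q_n-1}\log\frac{\mathcal R_{\alpha_i}(x_i)}{\mathcal R_{\beta_i}(x_i)},
\]
where on each atom $I_i^{n-1}=[\alpha_i,\beta_i]$ the quantities $\mathcal R_{\alpha_i},\mathcal R_{\beta_i}$ are the distortions (\ref{ok5}) for $\mathcal K=f$; for $i=0$ I would use the appropriate one-sided value of $f'$ at $\xi_0$ and restrict (\ref{ok1}) to the arc $I_0^{n-1}$, which lies on one side of $\xi_0$, so Lemma~\ref{raiovahosila} still applies. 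By definition $\log\widetilde m_n=-\sum_{i=0}^{q_n-1}\frac{f'(\alpha_i)-f'(\beta_i)}{2f'(\beta_i)}$. For $n$ large each atom is short, so each ratio $\mathcal R_{\alpha_i}(x_i)/\mathcal R_{\beta_i}(x_i)-1=:w_i$ is uniformly small and $\log(1+w_i)=w_i+\mathcal O(w_i^2)$ applies; combining this with Lemma~\ref{raiovahosila}, the leading terms cancel $\log\widetilde m_n$ and I obtain
\[
\widetilde\Upsilon_n(z_0)=\sum_{i=0}^{q_n-1}\Big(R_i+\mathcal O(w_i^2)\Big),\qquad R_i=\mathcal O\big(|I_i^{n-1}|\mathcal Z_\gamma(|I_i^{n-1}|)+|f'(\alpha_i)-f'(\beta_i)|\,\Omega(|I_i^{n-1}|,\gamma)\big),
\]
with $w_i=\mathcal O(|f'(\alpha_i)-f'(\beta_i)|)+\mathcal O(R_i)$, all estimates uniform in $z_0\in[0,1]$.

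It then remains to sum these errors, and here I would use three inputs. First, the atoms $I_i^{n-1}$, $0\le i<q_n$, are pairwise disjoint, so $\sum_i|I_i^{n-1}|\le1$ and, since $f'$ is of bounded variation, $\sum_i|f'(\alpha_i)-f'(\beta_i)|\le\mathrm{Var}(f')$. Second, there are $C>0$ and $\lambda=\lambda(f)\in(0,1)$ with $\max_{I\in\mathbb{P}_n}|I|\le C\lambda^n$ (the standard exponential a priori bound, available because $\log f'$ has bounded variation). Third, since $\mathcal Z_\gamma$ and $\Omega(\cdot,\gamma)$ are increasing near $0$, one has $\mathcal Z_\gamma(C\lambda^n)=\mathcal O(n^{-\gamma})$ and $\Omega(C\lambda^n,\gamma)=\mathcal O(\lambda^{n/2})$, while Theorems~\ref{modofcon} and \ref{WZ} give $\max_i|f'(\alpha_i)-f'(\beta_i)|\le\omega(C\lambda^n,f')=\mathcal O(\Omega(C\lambda^n,\gamma))=\mathcal O(\lambda^{n/2})$. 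Consequently $\sum_iR_i$ is bounded by $\mathcal Z_\gamma(C\lambda^n)\sum_i|I_i^{n-1}|+\Omega(C\lambda^n,\gamma)\sum_i|f'(\alpha_i)-f'(\beta_i)|=\mathcal O(n^{-\gamma})$, and $\sum_iw_i^2\le C\big(\max_i|f'(\alpha_i)-f'(\beta_i)|\big)\mathrm{Var}(f')+\big(\max_i|R_i|\big)\sum_i|R_i|=\mathcal O(\lambda^{n/2})+o(n^{-\gamma})$. Thus $\max_{z_0\in[0,1]}|\widetilde\Upsilon_n(z_0)|=\mathcal O(n^{-\gamma})$, and the same computation on the disjoint system $\{I_j^n\}_{0\le j<q_{n-1}}$ gives the bound for $\widehat\Upsilon_n$.

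The hard part is really this last input: the only summand that is not exponentially small is $\sum_i|I_i^{n-1}|\mathcal Z_\gamma(|I_i^{n-1}|)$, which cannot be improved beyond $\mathcal Z_\gamma$ of the shortest scale present -- this is exactly where the rate $n^{-\gamma}$ is produced, and it is why one genuinely needs the exponential decay $\max_{I\in\mathbb{P}_n}|I|\le C\lambda^n$ and not merely uniform shrinking of the atoms. A lesser nuisance is the single atom touching the break point $\xi_0$: one must be consistent about which one-sided derivative of $f'$ is used there and check that Lemma~\ref{raiovahosila} still applies, which it does since that atom lies entirely on one side of $\xi_0$.
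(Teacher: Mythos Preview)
Your argument is correct and follows the same route as the paper: decompose $\widetilde\Upsilon_n$ via multiplicativity of the distortion into $\sum_i\log(\mathcal R_{\alpha_i}/\mathcal R_{\beta_i})+\log\widetilde m_n$, apply Lemma~\ref{raiovahosila} on each atom $I_i^{n-1}$, cancel the leading contribution against $\log\widetilde m_n$, and sum the remainders using disjointness, bounded variation of $f'$, and the exponential bound $d_{n-1}=\mathcal O(\lambda^n)$. Your treatment of the Taylor remainder $\sum_i w_i^2$ and of the atom touching $\xi_0$ is more explicit than the paper's (which simply writes ``using Taylor's formula'' at that step), but the substance is identical.
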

\begin{proof}
We prove only first inequality, the second inequality
can be proved analogously.
Since, the ratio distortion is multiplicative with respect to
composition and
using notation (\ref{ok5}) we have
 \begin{equation}\label{ok8}
\widetilde{\Upsilon}_{n}(z_{0})=\sum_{i=0}^{q_{n}-1}\log\frac{\mathcal{R}_{\alpha_{i}}(x_{i})}
{\mathcal{R}_{\beta_{i}}(x_{i})} +\log \widetilde{m}_{n}.
\end{equation}
Because the sytem of intervals $\{I_{i}^{n-1}=[\alpha_{i}, \beta_{i}], \,\,0\leq i< q_{n}\}$ do not contain the break point,
utilizing   Lemma \ref{raiovahosila} we get
\begin{equation}\label{ok9}
\sum_{i=0}^{q_{n}-1}\log\frac{\mathcal{R}_{\alpha_{i}}(x_{i})}
{\mathcal{R}_{\beta_{i}}(x_{i})}=
\sum_{i=0}^{q_{n}-1}\log\Big(1+\frac{f'(\alpha_{i})-f'(\beta_{i})}{2f'(\beta_{i})}\Big)+
\end{equation}
$$
\mathcal{O}\Big(\sum_{i=0}^{q_{n}-1}|I_{i}^{n-1}|\cdot\mathcal{Z}_{\gamma}(|I_{i}^{n-1}|)+|f'(\alpha_{i})-f'(\beta_{i})|\cdot
\Omega(|I_{i}^{n-1}|, \gamma) \Big).
$$
It is clear
\begin{equation}\label{ok10}
\sum_{i=0}^{q_{n}-1}|I_{i}^{n-1}|\mathcal{Z}_{\gamma}(|I_{i}^{n-1}|)=
\mathcal{O}(\mathcal{Z}_{\gamma}(d_{n-1}))
\end{equation}
and
 \begin{equation}\label{ok11}
\sum_{i=0}^{q_{n}-1}|f'(\alpha_{i})-f'(\beta_{i})|\cdot
\Omega(|I_{i}^{n-1}|, \gamma)=\mathcal{O}\Big(\Omega(d_{n-1}, \gamma)\Big)
\end{equation}
where $d_{n}:=\|f^{q_{n}}-\mathrm{Id}\|_{C^0}.$
It is well known that  $d_{n}=\mathcal{O}(\lambda^{n})$ (see \cite{DSA2013})
and this implies
$$
\mathcal{Z}_{\gamma}(d_{n-1})+\Omega(d_{n-1}, \gamma)=\mathcal{O}\Big(\frac{1}{n^{\gamma}}\Big).
$$
Therefore, using  Taylor's formula and combining (\ref{ok9})-(\ref{ok11}) we get
\begin{equation*}\label{ok14}
\sum_{i=0}^{q_{n}-1}\log\frac{\mathcal{R}_{\alpha_{i}}(x_{i})}
{\mathcal{R}_{\beta_{i}}(x_{i})}=
-\log \widetilde{m}_{n}+\mathcal{O}\Big(\frac{1}{n^{\gamma}}\Big).
\end{equation*}
Thus and so
\begin{equation*}\label{ok15}
\underset{z_{0}\in [0,1]}{\max}|\widetilde{\Upsilon}_{n}(z_{0})|\leq \frac{C}{n^{\gamma}}.
 \end{equation*}
Lemma \ref{nolinchihosila} is proved.
\end{proof}
The following estimates will be used in the next section to approximate
relative coordinates with M\"{o}bius transformations in $C^{1}$-topology.
 \begin{lemm}\label{birinchihosila}
Let $f\in \mathrm{D}^{1+\mathcal{Z}_{\gamma}}(\mathbb{S}^{1}\setminus\{\xi_{0}\})$ and $\gamma\in(0, +\infty)$.
Then there exists a constant $C>0$ and a natural number $N_{0}=N_{0}(f)$ such that
$$
\underset{z_{0}\in [0,1]}{\max}\Big|z_{0}(1-z_{0})\frac{\mathrm{d}\widetilde{\Upsilon}_{n}(z_{0})}{\mathrm{d}z_{0}}\Big|
\leq  \frac{C}{n^{\gamma}},\,\,\,\,\,\,
\underset{\bar{z}_{0}\in [0,1]}{\max}\,\,\Big|\bar{z}_{0}(1-\bar{z}_{0})
\frac{\mathrm{d}\widehat{\Upsilon}_{n}(\bar{z}_{0})}{\mathrm{d}\bar{z}_{0}}\Big|\leq  \frac{C}{n^{\gamma}}
$$
for all $n\geq N_{0}.$
\end{lemm}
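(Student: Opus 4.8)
The plan is to differentiate the series (\ref{ok8}) for $\widetilde{\Upsilon}_n$, notice that the chain rule turns the resulting sum into a telescoping one, and identify the single surviving term as a quantity measuring how far $f^{q_n}$ restricted to $I_0^{n-1}$ deviates from a fractional-linear map; the Zygmund condition together with the exponential decay of the dynamical partition then forces this deviation to be $\mathcal{O}(n^{-\gamma})$. The estimate for $\widehat{\Upsilon}_n$ is obtained verbatim, with $f^{q_{n-1}}$ and the intervals $I_j^n$ ($0\le j<q_{n-1}$) in place of $f^{q_n}$ and $I_i^{n-1}$, so I describe only the first one.

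First I would fix $z_0\in(0,1)$, where $z_0\mapsto x_i=f^i(x)$ is $C^1$ with $\frac{dx_i}{dz_0}=(\alpha_0-\beta_0)(f^i)'(x)$ and each $\mathcal{R}_{\alpha_i},\mathcal{R}_{\beta_i}$ is $C^1$ in $x_i$; note that $\log\widetilde m_n$ in (\ref{ok8}) does not depend on $z_0$. Writing $L_i:=\beta_i-\alpha_i$, $P_i:=(x_i-\alpha_i)(\beta_i-x_i)$ and $D_i:=(f^i)'(x)$ (so $D_0=1$, $D_{i+1}=f'(x_i)D_i$), the identities $\mathcal{R}_{\alpha_i}(x_i)=\frac{f(x_i)-f(\alpha_i)}{x_i-\alpha_i}$ and $\mathcal{R}_{\beta_i}(x_i)=\frac{f(\beta_i)-f(x_i)}{\beta_i-x_i}$ give $\frac{d}{dx_i}\log\frac{\mathcal{R}_{\alpha_i}(x_i)}{\mathcal{R}_{\beta_i}(x_i)}=f'(x_i)\frac{L_{i+1}}{P_{i+1}}-\frac{L_i}{P_i}$, hence
$$
\frac{d}{dz_0}\log\frac{\mathcal{R}_{\alpha_i}(x_i)}{\mathcal{R}_{\beta_i}(x_i)}=-L_0\Big(D_{i+1}\frac{L_{i+1}}{P_{i+1}}-D_i\frac{L_i}{P_i}\Big).
$$
Summing over $0\le i\le q_n-1$ the right-hand side telescopes, and since $P_0=z_0(1-z_0)L_0^2$ one obtains the exact identity
$$
z_0(1-z_0)\frac{d\widetilde{\Upsilon}_n}{dz_0}=1-\mathcal{M}_n(z_0),\qquad \mathcal{M}_n(z_0):=\frac{P_0\,L_{q_n}\,D_{q_n}}{L_0\,P_{q_n}}=\prod_{i=0}^{q_n-1}\frac{f'(x_i)\,\mathcal{R}([\alpha_i,\beta_i];f)}{\mathcal{R}_{\alpha_i}(x_i)\,\mathcal{R}_{\beta_i}(x_i)},
$$
the last equality following from multiplicativity of the distortion. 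Every factor of $\mathcal{M}_n$ equals $1$ when $f$ (hence $f^{q_n}$) is fractional-linear, so $\mathcal{M}_n\equiv1$ in that case, and it remains to prove $|\mathcal{M}_n(z_0)-1|\le Cn^{-\gamma}$ uniformly in $z_0\in(0,1)$.

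To this end I would estimate $\log\mathcal{M}_n(z_0)=\sum_{i=0}^{q_n-1}\big(\log f'(x_i)+\log\mathcal{R}([\alpha_i,\beta_i];f)-\log\mathcal{R}_{\alpha_i}(x_i)-\log\mathcal{R}_{\beta_i}(x_i)\big)$ term by term. Each $I_i^{n-1}$, $0\le i<q_n$, avoids the break point, so $f\in\mathrm{D}^{1+\mathcal{Z}_\gamma}(I_i^{n-1})$ with the constant of (\ref{ok1}); the expansion used in the proof of Lemma \ref{raiovahosila} gives $\mathcal{R}_{\alpha_i}(x_i)=\frac12(f'(x_i)+f'(\alpha_i))+\mathcal{O}(|I_i^{n-1}|\mathcal{Z}_\gamma(|I_i^{n-1}|))$, and likewise for $\mathcal{R}_{\beta_i}(x_i)$ and for $\mathcal{R}([\alpha_i,\beta_i];f)=\mathcal{R}_{\alpha_i}(\beta_i)=\frac12(f'(\alpha_i)+f'(\beta_i))+\mathcal{O}(\cdot)$. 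Since $f'$ is bounded away from $0$ and $\infty$, the $i$-th summand equals
$$
\log\frac{2f'(x_i)\,(f'(\alpha_i)+f'(\beta_i))}{(f'(x_i)+f'(\alpha_i))(f'(x_i)+f'(\beta_i))}+\mathcal{O}(|I_i^{n-1}|\mathcal{Z}_\gamma(|I_i^{n-1}|)),
$$
and the argument of the logarithm is $1-\frac{(f'(x_i)-f'(\alpha_i))(f'(x_i)-f'(\beta_i))}{(f'(x_i)+f'(\alpha_i))(f'(x_i)+f'(\beta_i))}$, so this summand is $\mathcal{O}(\omega_i^2)+\mathcal{O}(|I_i^{n-1}|\mathcal{Z}_\gamma(|I_i^{n-1}|))$ with $\omega_i:=\mathrm{osc}_{I_i^{n-1}}f'$. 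Finally I would sum: $|I_i^{n-1}|=|\xi_i-f^{q_{n-1}}(\xi_i)|\le d_{n-1}$ and $\mathcal{Z}_\gamma$ is increasing, so $\sum_i|I_i^{n-1}|\mathcal{Z}_\gamma(|I_i^{n-1}|)\le\mathcal{Z}_\gamma(d_{n-1})$; the $I_i^{n-1}$ have pairwise disjoint interiors, so $\sum_i\omega_i\le\mathrm{Var}(f')$ and hence $\sum_i\omega_i^2\le\mathrm{Var}(f')\cdot\omega(d_{n-1},f')$. Using $d_{n-1}=\mathcal{O}(\lambda^n)$ (see \cite{DSA2013}) together with Theorems \ref{modofcon} and \ref{WZ}, which bound $\omega(d_{n-1},f')$ by $\Omega(d_{n-1},\gamma)$, both $\mathcal{Z}_\gamma(d_{n-1})$ and $\omega(d_{n-1},f')$ are $\mathcal{O}(n^{-\gamma})$; therefore $|\log\mathcal{M}_n(z_0)|\le Cn^{-\gamma}$, so $|\mathcal{M}_n(z_0)-1|\le C'n^{-\gamma}$. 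Combined with the identity above and the fact that $z_0(1-z_0)\frac{d\widetilde{\Upsilon}_n}{dz_0}=1-\mathcal{M}_n(z_0)$ stays bounded and extends continuously to $z_0\in\{0,1\}$, this gives the asserted bound on $[0,1]$.

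The main obstacle is the first step: recognizing that the chain-rule derivative telescopes and that the surviving term is precisely the M\"{o}bius-deviation factor $\mathcal{M}_n$; once this identity is in hand, the proof reduces entirely to the one-step distortion bounds of Section 2 summed along the orbit, in the spirit of Lemma \ref{nolinchihosila}, and the only other delicate point — uniformity up to the degenerate endpoints $z_0\in\{0,1\}$ — is settled by that same identity, since its right-hand side is continuous on all of $[0,1]$.
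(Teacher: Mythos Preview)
Your proof is correct, and the telescoping identity
\[
z_{0}(1-z_{0})\frac{\mathrm{d}\widetilde{\Upsilon}_{n}}{\mathrm{d}z_{0}}
=1-\mathcal{M}_{n}(z_{0}),
\qquad
\mathcal{M}_{n}(z_{0})=\prod_{i=0}^{q_{n}-1}\frac{f'(x_{i})\,\mathcal{R}([\alpha_{i},\beta_{i}];f)}{\mathcal{R}_{\alpha_{i}}(x_{i})\,\mathcal{R}_{\beta_{i}}(x_{i})}
\]
is a genuinely different and more transparent route than the one taken in the paper. The paper does \emph{not} telescope: it keeps the sum $\sum_{i}\frac{d\Psi(x_{i})}{dz_{0}}$, uses Finzi's inequality to transfer the weight $z_{0}(1-z_{0})|\alpha_{0}-\beta_{0}|$ from index $0$ to index $i$, and then splits $\frac{d\Psi(x_{i})}{dx_{i}}$ into two pieces, the second of which is controlled via Corollary~\ref{corqavariq} (hence via the $T_{\gamma}$-estimate of Lemma~\ref{qavariqlik1}). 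Your argument bypasses both Finzi's inequality and the $T_{\gamma}$-machinery entirely: once the telescoping is recognized, only the one-step expansion $\mathcal{R}_{\alpha_{i}}(x_{i})=\tfrac12(f'(x_{i})+f'(\alpha_{i}))+\mathcal{O}(|I_{i}^{n-1}|\mathcal{Z}_{\gamma}(|I_{i}^{n-1}|))$ from the proof of Lemma~\ref{raiovahosila} and the bounded variation of $f'$ are needed. The quantity $\mathcal{M}_{n}$ is, moreover, exactly a cross-ratio distortion of $f^{q_{n}}$ on $I_{0}^{n-1}$, which makes the M\"{o}bius-deviation interpretation explicit. The trade-off is that the paper's termwise decomposition (the $A_{n}$, $B_{n}$ split) is set up so that the same framework carries over verbatim to Lemmas~\ref{ikkinchihosila} and~\ref{ikkinchihosila2}, where one differentiates once more; your telescoped form would require a separate (though still elementary) computation there.
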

\begin{proof}
We prove the first inequality, the second one can be proved analogously.
Setting
\begin{equation}\label{ok17}
\Psi(x_{i}):=\log \frac{\mathcal{R}_{\alpha_{i}}(x_{i})}{\mathcal{R}_{\beta_{i}}(x_{i})}
\end{equation}
rewrite $\widetilde{\Upsilon}_{n}$ as  follows
\begin{equation}\label{ok19}
  \widetilde{\Upsilon}_{n}(z_{0})=\sum_{i=0}^{q_{n}-1}\Psi(x_{i})+\log \widetilde{m}_{n}.
\end{equation}
It is clear
\begin{equation}\label{ok20}
\frac{\mathrm{d}\widetilde{\Upsilon}_{n}(z_{0})}{\mathrm{d}z_{0}}
=\frac{\mathrm{d}\widetilde{\Upsilon}_{n}(x)}{\mathrm{d}x}\cdot\frac{\mathrm{d}x}{\mathrm{d}z_{0}}=
(\alpha_{0}-\beta_{0})\cdot\frac{\mathrm{d}\widetilde{\Upsilon}_{n}(x)}{\mathrm{d}x}
\end{equation}
and
\begin{equation}\label{ok21}
  \frac{\mathrm{d}\Psi(x_{i})}{\mathrm{d}x}=\frac{\mathrm{d}\Psi(x_{i})}{\mathrm{d}x_{i}}\cdot
  \frac{\mathrm{d}x_{i}}{\mathrm{d}x}=
\frac{\mathrm{d}\Psi(x_{i})}{\mathrm{d}x_{i}}\cdot(f^{i}(x))'.
\end{equation}
Using Finzi's inequality (see \cite{ADM2012} for finitely many breaks) we obtain
\begin{equation}\label{ok23}
  e^{-\nu}\leq \frac{(f^{i}(x))'(\alpha_{0}-\beta_{0})}{(\alpha_{i}-\beta_{i})}\leq e^{\nu}\,\,\,\,\,
\text{and}\,\,\,\,\,\, e^{-2\nu}\leq \frac{z_{0}(1-z_{0})}{z_{i}(1-z_{i})}\leq e^{2\nu}.
\end{equation}
From (\ref{ok19})-(\ref{ok23}) we get
\begin{equation}\label{ok24}
\Big|z_{0}(1-z_{0})\frac{\mathrm{d}\widetilde{\Upsilon}_{n}(z_{0})}{\mathrm{d}z_{0}}\Big|\leq
e^{3\nu}\Big|\sum_{i=0}^{q_{n}-1}z_{i}(1-z_{i})(\alpha_{i}-\beta_{i})\frac{\mathrm{d}\Psi(x_{i})}{\mathrm{d}x_{i}}\Big|.
\end{equation}
By differentiating (\ref{ok17}) we have
\begin{equation}\label{ok25}
  \frac{\mathrm{d}\Psi(x_{i})}{\mathrm{d}x_{i}}=
  \frac{1}{\mathcal{R}_{\alpha_{i}}(x_{i})}\cdot\frac{\mathrm{d}\mathcal{R}_{\alpha_{i}}(x_{i})}{\mathrm{d}x_{i}}-
   \frac{1}{\mathcal{R}_{\beta_{i}}(x_{i})}\cdot\frac{\mathrm{d}\mathcal{R}_{\beta_{i}}(x_{i})}{\mathrm{d}x_{i}}=
  \end{equation}
$$
\Big(\frac{1}{\mathcal{R}_{\alpha_{i}}(x_{i})}-\frac{1}{\mathcal{R}_{\beta_{i}}(x_{i})}\Big)
\cdot\frac{\mathrm{d}\mathcal{R}_{\alpha_{i}}(x_{i})}{\mathrm{d}x_{i}}+
\frac{1}{\mathcal{R}_{\beta_{i}}(x_{i})}\cdot
\Big(\frac{\mathrm{d}\mathcal{R}_{\alpha_{i}}(x_{i})}{\mathrm{d}x_{i}}-\frac{\mathrm{d}\mathcal{R}_{\beta_{i}}(x_{i})}{\mathrm{d}x_{i}}\Big).
$$
Utilizing mean value theorem and Theorems \ref{modofcon}, \ref{WZ} we get
\begin{equation}\label{ok26}
  \Big|\frac{1}{\mathcal{R}_{\alpha_{i}}(x_{i})}-\frac{1}{\mathcal{R}_{\beta_{i}}(x_{i})}\Big|=
  \Big|\frac{1}{f'(\check{\alpha}_{i})}-\frac{1}{f'(\check{\beta}_{i})}\Big|\leq C\cdot\Omega(d_{n-1}, \gamma)
\end{equation}
for any $\gamma\in(0, +\infty),$ where $\check{\alpha}_{i}\in [\alpha_{i}, x_{i}]$
and $\check{\beta}_{i}\in [x_{i}, \beta_{i}].$
Using this, we estimate $|\frac{\mathrm{d}\Psi(x_{i})}{\mathrm{d}x_{i}}|$ as
 \begin{equation}\label{ok27}
  \Big|\frac{\mathrm{d}\Psi(x_{i})}{\mathrm{d}x_{i}}\Big|\leq
C\cdot \Omega(d_{n-1}, \gamma)\Big|\frac{\mathrm{d}\mathcal{R}_{\alpha_{i}}(x_{i})}{\mathrm{d}x_{i}}\Big|+
\frac{1}{\underset{\mathbb{S}^1}{\inf} f'(\xi)}\cdot\Big|\frac{\mathrm{d}\mathcal{R}_{\alpha_{i}}(x_{i})}{\mathrm{d}x_{i}}-
\frac{\mathrm{d}\mathcal{R}_{\beta_{i}}(x_{i})}{\mathrm{d}x_{i}}\Big|.
\end{equation}
Applying this inequality to  the right hand side of (\ref{ok24}) we get
\begin{equation}\label{ok28}
\Big|z_{0}(1-z_{0})\frac{\mathrm{d}\widetilde{\Upsilon}_{n}(z_{0})}{\mathrm{d}z_{0}}\Big|\leq
Ce^{3\nu}\Omega(d_{n-1}, \gamma) \sum_{i=0}^{q_{n}-1}z_{i}(1-z_{i})|\alpha_{i}-\beta_{i}|\Big|\frac{\mathrm{d}\mathcal{R}_{\alpha_{i}}(x_{i})}{\mathrm{d}x_{i}}\Big|
\end{equation}
$$
+\frac{e^{3\nu}}{\underset{\mathbb{S}^1}{\inf} f'(\xi)}\sum_{i=0}^{q_{n}-1}z_{i}(1-z_{i})|\alpha_{i}-\beta_{i}|
\Big|\frac{\mathrm{d}\mathcal{R}_{\alpha_{i}}(x_{i})}{\mathrm{d}x_{i}}-
\frac{\mathrm{d}\mathcal{R}_{\beta_{i}}(x_{i})}{\mathrm{d}x_{i}}\Big|:=A_{n}+B_{n}.
$$
Next we estimate $A_{n}$ and $B_{n}.$ To estimate $A_{n}$ we use the following obvious equality
 $$
 z_{i}(1-z_{i})|\alpha_{i}-\beta_{i}|
\Big|\frac{\mathrm{d}\mathcal{R}_{\alpha_{i}}(x_{i})}{\mathrm{d}x_{i}}\Big|=
z_{i}|f'(x_{i})-f'(\check{\alpha}_{i})|.
 $$
Therefore
$$
A_{n}=Ce^{3\nu}\Omega(d_{n-1}, \gamma) \sum_{i=0}^{q_{n}-1}z_{i}|f'(x_{i})-f'(\check{\alpha}_{i})|.
$$
Since $f'$ has bounded  variation and the system of intervals $\{[x_{i}, \check{\alpha}_{i}], \,\,\,\,\,0\leq i <q_{n}\}$
do not intersect, we have
$$
A_{n}=\mathcal{O}\Big(\Omega(d_{n-1}, \gamma)\Big).
$$
Next we estimate $B_{n}.$ By definition of relative coordinate $z_{i}$ we have
$$
z_{i}(1-z_{i})|\alpha_{i}-\beta_{i}|
\Big|\frac{\mathrm{d}\mathcal{R}_{\alpha_{i}}(x_{i})}{\mathrm{d}x_{i}}-
\frac{\mathrm{d}\mathcal{R}_{\beta_{i}}(x_{i})}{\mathrm{d}x_{i}}\Big|=
\frac{(x_{i}-\alpha_{i})(\beta_{i}-x_{i})}{\beta_{i}-\alpha_{i}}
\Big|\frac{\mathrm{d}\mathcal{R}_{\alpha_{i}}(x_{i})}{\mathrm{d}x_{i}}-
\frac{\mathrm{d}\mathcal{R}_{\beta_{i}}(x_{i})}{\mathrm{d}x_{i}}\Big|.
$$
Due to Corollary \ref{corqavariq} we get
$$
\frac{(x_{i}-\alpha_{i})(\beta_{i}-x_{i})}{\beta_{i}-\alpha_{i}}
\Big|\frac{\mathrm{d}\mathcal{R}_{\alpha_{i}}(x_{i})}{\mathrm{d}x_{i}}-
\frac{\mathrm{d}\mathcal{R}_{\beta_{i}}(x_{i})}{\mathrm{d}x_{i}}\Big|=
\mathcal{O}\Big(|I_{i}^{n-1}|\mathcal{Z}_{\gamma}(|I_{i}^{n-1}|)\Big)
$$
for sufficiently lagre $n.$
Hence
$$
B_{n}=\mathcal{O}\Big(\sum_{i=0}^{q_{n}-1}|I_{i}^{n-1}|\mathcal{Z}_{\gamma}(|I_{i}^{n-1}|)\Big)
=\mathcal{O}\Big(\mathcal{Z}_{\gamma}(d_{n-1})\Big)
$$
for sufficiently large $n.$
Finally
$$
A_{n}+B_{n}=\mathcal{O}\Big(\frac{1}{n^{\gamma}}\Big).
$$
Lemma \ref{birinchihosila} is proved.
\end{proof}
In the next two lemmas we estimate the first and second derivatives of
$\widetilde{\Upsilon}_{n}$ and $\widehat{\Upsilon}_{n}$ in the case of $\gamma\in(1, +\infty).$
These estimates will be used in the next section to approximate relative cordinates with
M\"{o}bius transformations in $C^{2}$-topology for $\gamma>1.$
Note that in this case according to Theorem \ref{WZ},
$f'$ is differentiable on $[\xi_{0}, \xi_{0}+1]$ (here $f'(\xi_{0})=f'(\xi_{0}+0)$ and $f'(\xi_{0}+1)=f'(\xi_{0}-0)$)
and the modulus of
continuity of $f''$ is $\mathcal{P}_{\gamma}.$
\begin{lemm}\label{ikkinchihosila}
Let $f\in \mathrm{D}^{1+\mathcal{Z}_{\gamma}}(\mathbb{S}^{1}\setminus\{\xi_{0}\})$ and $\gamma\in(1, +\infty)$.
Then there exists a constant $C>0$ 
such that
$$
\underset{z_{0}\in [0,1]}{\max}\Big|\frac{\mathrm{d}\widetilde{\Upsilon}_{n}(z_{0})}{\mathrm{d}z_{0}}\Big|
\leq  \frac{C}{n^{\gamma-1}},\,\,\,\,\,
\underset{\bar{z}_{0}\in [0,1]}{\max}\,\,\Big|
\frac{\mathrm{d}\widehat{\Upsilon}_{n}(\bar{z}_{0})}{\mathrm{d}\bar{z}_{0}}\Big|\leq \frac{C}{n^{\gamma-1}}
$$
for all $n\geq 1.$
\end{lemm}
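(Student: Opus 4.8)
The plan is to follow the scheme of the proof of Lemma~\ref{birinchihosila}, but to replace the crude Zygmund-only estimates used there by the stronger regularity available when $\gamma>1$: by Theorem~\ref{WZ}, on any interval avoiding $\xi_{0}$ the map $f$ is $C^{1}$, $f''$ is bounded, and $f''$ has modulus of continuity $\mathcal{P}_{\gamma}$. I will prove the estimate for $\widetilde{\Upsilon}_{n}$; the one for $\widehat{\Upsilon}_{n}$ is obtained verbatim with the intervals $I_{j}^{n}$, $0\le j<q_{n-1}$, in place of $I_{i}^{n-1}$. Recall from (\ref{ok17})--(\ref{ok19}) that $\widetilde{\Upsilon}_{n}(z_{0})=\sum_{i=0}^{q_{n}-1}\Psi(x_{i})+\log\widetilde{m}_{n}$, with $\Psi(x_{i})=\log(\mathcal{R}_{\alpha_{i}}(x_{i})/\mathcal{R}_{\beta_{i}}(x_{i}))$. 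Differentiating in $z_{0}$ by the chain rule exactly as in (\ref{ok20})--(\ref{ok21}), and bounding $|(f^{i}(x))'(\alpha_{0}-\beta_{0})|\le e^{\nu}|\alpha_{i}-\beta_{i}|=e^{\nu}|I_{i}^{n-1}|$ via Finzi's inequality (\ref{ok23}), I reduce everything to $|\mathrm{d}\widetilde{\Upsilon}_{n}(z_{0})/\mathrm{d}z_{0}|\le e^{\nu}\sum_{i=0}^{q_{n}-1}|I_{i}^{n-1}|\,|\Psi'(x_{i})|$. The essential difference from Lemma~\ref{birinchihosila} is that here no factor $z_{0}(1-z_{0})$ is available to absorb error terms; hence the main step — and the place where I expect the real work and the greatest risk of error — is a pointwise bound for $|\Psi'(x_{i})|$ that avoids the singular factors $1/z_{i}$ and $1/(1-z_{i})$ produced by the naive estimates of $\mathcal{R}'_{\alpha_{i}}$ and $\mathcal{R}'_{\beta_{i}}$.

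The pointwise claim I would establish is $|\Psi'(x_{i})|\le C\,\mathcal{P}_{\gamma}(|I_{i}^{n-1}|)$, with $C$ independent of $n$ and of $x_{i}$. Its proof rests on three facts about the interval $I_{i}^{n-1}=[\alpha_{i},\beta_{i}]$, which contains no break point. First, by the mean value theorem $\mathcal{R}_{\alpha_{i}}(x_{i})$ and $\mathcal{R}_{\beta_{i}}(x_{i})$ are values of $f'$ at points of $I_{i}^{n-1}$, hence are bounded above and below by (i)--(iii), and, since $f'$ has modulus of continuity $\mathcal{P}_{\gamma}$, satisfy $|\mathcal{R}_{\alpha_{i}}(x_{i})-\mathcal{R}_{\beta_{i}}(x_{i})|\le C\,\mathcal{P}_{\gamma}(|I_{i}^{n-1}|)$. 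Second, from $\mathcal{R}'_{\alpha_{i}}(x_{i})=(f'(x_{i})-\mathcal{R}_{\alpha_{i}}(x_{i}))/(x_{i}-\alpha_{i})$ and its analogue for $\mathcal{R}'_{\beta_{i}}$, a one-line Taylor estimate with $\|f''\|_{C^{0}}$ gives $|\mathcal{R}'_{\alpha_{i}}(x_{i})|,|\mathcal{R}'_{\beta_{i}}(x_{i})|\le \frac{1}{2}\|f''\|_{C^{0}}$. Third, Lemma~\ref{ratiohosilaayirmasi} yields $|\mathcal{R}'_{\alpha_{i}}(x_{i})-\mathcal{R}'_{\beta_{i}}(x_{i})|\le C\,\mathcal{P}_{\gamma}(|I_{i}^{n-1}|)$. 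Writing $\Psi'(x_{i})=\mathcal{R}'_{\alpha_{i}}/\mathcal{R}_{\alpha_{i}}-\mathcal{R}'_{\beta_{i}}/\mathcal{R}_{\beta_{i}}$ and using the identity $\mathcal{R}'_{\alpha_{i}}\mathcal{R}_{\beta_{i}}-\mathcal{R}'_{\beta_{i}}\mathcal{R}_{\alpha_{i}}=(\mathcal{R}'_{\alpha_{i}}-\mathcal{R}'_{\beta_{i}})\mathcal{R}_{\beta_{i}}+\mathcal{R}'_{\beta_{i}}(\mathcal{R}_{\beta_{i}}-\mathcal{R}_{\alpha_{i}})$, these three facts combine at once to the claim. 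The temptation to be resisted here is to use the cheaper estimate $\mathcal{R}'_{\alpha_{i}}(x_{i})=\frac{1}{2}\frac{f'(x_{i})-f'(\alpha_{i})}{x_{i}-\alpha_{i}}+\frac{1}{1-z_{i}}\mathcal{O}(\mathcal{Z}_{\gamma}(|I_{i}^{n-1}|))$ from the proof of Lemma~\ref{qavariqlik}, whose error blows up as $z_{i}\to 1$ and would ruin the uniformity in $z_{0}$.

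Finally comes the summation and the translation into a rate in $n$. Exactly as for (\ref{ok10})--(\ref{ok11}) — the intervals $I_{i}^{n-1}$, $0\le i<q_{n}$, are disjoint since they belong to the dynamical partition $\mathbb{P}_{n}$, and $\mathcal{P}_{\gamma}$ is increasing — one gets $\sum_{i=0}^{q_{n}-1}|I_{i}^{n-1}|\,|\Psi'(x_{i})|\le C\sum_{i=0}^{q_{n}-1}|I_{i}^{n-1}|\,\mathcal{P}_{\gamma}(|I_{i}^{n-1}|)=\mathcal{O}(\mathcal{P}_{\gamma}(d_{n-1}))$, where $d_{n-1}=\|f^{q_{n-1}}-\mathrm{Id}\|_{C^{0}}$. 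Since $d_{n-1}=\mathcal{O}(\lambda^{n})$ for some $\lambda<1$ (see~\cite{DSA2013}), one has $\log(1/d_{n-1})\ge c\,n$ for $n$ large; and from (\ref{eq6303333avgust}), for $\gamma>1$, $\mathcal{P}_{\gamma}(x)=\sum_{m\ge 1}(m\log 2+\log(1/x))^{-\gamma}=\mathcal{O}((\log(1/x))^{1-\gamma})$. Hence $\mathcal{P}_{\gamma}(d_{n-1})=\mathcal{O}(n^{-(\gamma-1)})$, the finitely many small $n$ being absorbed into the constant, which proves the bound for $\widetilde{\Upsilon}_{n}$; the identical argument with the intervals $I_{j}^{n}$, $0\le j<q_{n-1}$, proves it for $\widehat{\Upsilon}_{n}$. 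This chain also makes the exponent transparent: $\mathcal{P}_{\gamma}(d_{n-1})\approx n\,\mathcal{Z}_{\gamma}(d_{n-1})$, so giving up the weight $z_{0}(1-z_{0})$ that drove Lemma~\ref{birinchihosila} costs exactly one power of $n$, turning the rate $\mathcal{O}(n^{-\gamma})$ into $\mathcal{O}(n^{-(\gamma-1)})$.
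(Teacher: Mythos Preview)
Your proof is correct and follows essentially the same approach as the paper: the same chain-rule reduction to $\sum_{i}|I_{i}^{n-1}|\,|\Psi'(x_{i})|$, the same algebraic splitting of $\Psi'$ into a cross term controlled by $|\mathcal{R}_{\alpha_{i}}-\mathcal{R}_{\beta_{i}}|$ (the paper uses (\ref{ok26}) here, you use the equivalent $\mathcal{P}_{\gamma}$-modulus of $f'$) and a difference term controlled by Lemma~\ref{ratiohosilaayirmasi}, the same Taylor bound $|\mathcal{R}'_{\alpha_{i}}|\le\tfrac12\|f''\|_{C^{0}}$ (compare (\ref{ok31aa})), and the same final estimate $\mathcal{P}_{\gamma}(d_{n-1})=\mathcal{O}(n^{-(\gamma-1)})$. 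The only cosmetic difference is that you package the two pieces into a single pointwise bound $|\Psi'(x_{i})|\le C\,\mathcal{P}_{\gamma}(|I_{i}^{n-1}|)$ before summing, whereas the paper sums the two pieces $C_{n}$ and $D_{n}$ separately.
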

\begin{proof}
We prove only the first inequality, the second one can be proved analogously.
The same manner as in proof of Lemma \ref{birinchihosila} we can show that
\begin{equation*}\label{ok31}
\Big|\frac{\mathrm{d}\widetilde{\Upsilon}_{n}(z_{0})}{\mathrm{d}z_{0}}\Big|\leq
e^{\nu}\Big|\sum_{i=0}^{q_{n}-1}(\alpha_{i}-\beta_{i})\frac{\mathrm{d}\Psi(x_{i})}{\mathrm{d}x_{i}}\Big|\leq
Ce^{\nu}\Omega(d_{n-1}, \gamma) \sum_{i=0}^{q_{n}-1}
|\alpha_{i}-\beta_{i}|\Big|\frac{\mathrm{d}\mathcal{R}_{\alpha_{i}}(x_{i})}{\mathrm{d}x_{i}}\Big|
\end{equation*}
$$
+\frac{e^{\nu}}{\underset{\mathbb{S}^{1}}{\inf} f'(\xi)}\sum_{i=0}^{q_{n}-1}|\alpha_{i}-\beta_{i}|
\Big|\frac{\mathrm{d}\mathcal{R}_{\alpha_{i}}(x_{i})}{\mathrm{d}x_{i}}-
\frac{\mathrm{d}\mathcal{R}_{\beta_{i}}(x_{i})}{\mathrm{d}x_{i}}\Big|:=C_{n}+D_{n}.
$$
Below we estimate $C_{n}$ and $D_{n}.$
Since  $f'$ is differentiable,   it can be shown easily
\begin{equation}\label{ok31aa}
|\alpha_{i}-\beta_{i}|\Big|\frac{\mathrm{d}\mathcal{R}_{\alpha_{i}}(x_{i})}{\mathrm{d}x_{i}}\Big|=
|\alpha_{i}-\beta_{i}|\Big|\frac{f'(x_{i})-\mathcal{R}_{\alpha_{i}}(x_{i})}{x_{i}-\alpha_{i}}\Big|=
\end{equation}
$$
|\alpha_{i}-\beta_{i}|\Big|\int_{\alpha_{i}}^{x_{i}}\frac{f''(y)(y-\alpha_{i})}{(x_{i}-\alpha_{i})^{2}}dy\Big|
=\mathcal{O}\Big(|\alpha_{i}-\beta_{i}|\Big).
$$
Thereby
$$
C_{n}=\mathcal{O}\Big(\Omega(d_{n-1}, \gamma)\sum_{i=0}^{q_{n}-1}|\alpha_{i}-\beta_{i}|\Big)
=\mathcal{O}\Big(\Omega(d_{n-1}, \gamma)\Big).
$$
 Now we estimate $D_{n}.$
According to Lemma \ref{ratiohosilaayirmasi} we have
\begin{equation*}\label{ok31aaa}
\Big|\frac{\mathrm{d}\mathcal{R}_{\alpha_{i}}(x_{i})}{\mathrm{d}x_{i}}-
\frac{\mathrm{d}\mathcal{R}_{\beta_{i}}(x_{i})}{\mathrm{d}x_{i}}\Big|\leq
C\mathcal{P}_{\gamma}(|I^{n-1}_{i}|).
\end{equation*}
Hence
$$
D_{n}=\mathcal{O}\Big(\sum_{i=0}^{q_{n}-1}|\alpha_{i}-\beta_{i}|\mathcal{P}_{\gamma}(|I^{n-1}_{i}|)\Big)=
\mathcal{O}\Big(\mathcal{P}_{\gamma}(d_{n-1})\Big).
$$
From the definiteness of $\mathcal{P}_{\gamma}$
$$
\mathcal{P}_{\gamma}(d_{n-1})=\sum_{k=1}^{\infty}\mathcal{Z}_{\gamma}(2^{-k}d_{n-1})\leq
\int_{0}^{1}\frac{\mathcal{Z}_{\gamma}(yd_{n-1})}{y}dy=\frac{1}{\gamma-1}\Big(\log \frac{1}{d_{n-1}}\Big)^{1-\gamma}.
$$
As before, using the relation $d_{n}=\mathcal{O}(\lambda^{n})$ we obtain
$$
\Big(\log \frac{1}{d_{n-1}}\Big)^{1-\gamma}+\Omega(d_{n-1}, \gamma)=\mathcal{O}\Big(\frac{1}{n^{\gamma-1}}\Big).
$$
Finally
$$
C_{n}+D_{n}=\mathcal{O}\Big(\frac{1}{n^{\gamma-1}}\Big).
$$
Lemma \ref{ikkinchihosila} is proved.
\end{proof}
\begin{lemm}\label{ikkinchihosila2}
Let $f\in \mathrm{D}^{1+\mathcal{Z}_{\gamma}}(\mathbb{S}^{1}\setminus\{\xi_{0}\})$ and $\gamma\in(1, +\infty)$.
Then there exists a constant $C>0$ and a natural number $N_{0}=N_{0}(f)$ such that
$$
\underset{z_{0}\in [0,1]}{\max}\Big|z_{0}(1-z_{0})\frac{\mathrm{d}^{2}\widetilde{\Upsilon}_{n}(z_{0})}{\mathrm{d}z^{2}_{0}}\Big|\leq  \frac{C}{n^{\gamma-1}},\,\,\,\,\,
\underset{\bar{z}_{0}\in [0,1]}{\max}\,\,\Big|
\bar{z}_{0}(1-\bar{z}_{0})\frac{\mathrm{d}^{2}\widehat{\Upsilon}_{n}(\bar{z}_{0})}{\mathrm{d}\bar{z}^{2}_{0}}\Big|\leq
 \frac{C}{n^{\gamma-1}}
$$
for all $n\geq N_{0}.$
\end{lemm}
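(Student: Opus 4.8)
The plan is to push the argument of Lemmas~\ref{birinchihosila} and~\ref{ikkinchihosila} to second order. It suffices to prove the first inequality; the second is obtained by the same computation after replacing $\alpha_{i},\beta_{i},x_{i},z_{i}$ by $\bar\alpha_{j},\bar\beta_{j},y_{j},\bar z_{j}$. Writing $\widetilde{\Upsilon}_{n}(z_{0})=\sum_{i=0}^{q_{n}-1}\Psi(x_{i})+\log\widetilde{m}_{n}$ with $\Psi(x_{i})=\log\mathcal{R}_{\alpha_{i}}(x_{i})-\log\mathcal{R}_{\beta_{i}}(x_{i})$, and differentiating twice through $\mathrm{d}x/\mathrm{d}z_{0}=\alpha_{0}-\beta_{0}$ and $\mathrm{d}x_{i}/\mathrm{d}x=(f^{i}(x))'$, one gets (with $\Psi',\Psi''$ the derivatives in $x_{i}$)
$$
z_{0}(1-z_{0})\frac{\mathrm{d}^{2}\widetilde{\Upsilon}_{n}(z_{0})}{\mathrm{d}z^{2}_{0}}
=z_{0}(1-z_{0})(\alpha_{0}-\beta_{0})^{2}\sum_{i=0}^{q_{n}-1}
\Big(((f^{i}(x))')^{2}\,\Psi''(x_{i})+(f^{i}(x))''\,\Psi'(x_{i})\Big).
$$
By Finzi's inequality~(\ref{ok23}), $z_{0}(1-z_{0})(\alpha_{0}-\beta_{0})^{2}((f^{i}(x))')^{2}\leq e^{4\nu}\,z_{i}(1-z_{i})(\alpha_{i}-\beta_{i})^{2}=e^{4\nu}(x_{i}-\alpha_{i})(\beta_{i}-x_{i})$; and from the standard expression $(f^{i}(x))''=(f^{i}(x))'\sum_{k=0}^{i-1}\frac{f''(f^{k}(x))}{f'(f^{k}(x))}(f^{k}(x))'$ together with~(\ref{ok23}) and $\sum_{k=0}^{q_{n}-1}|I^{n-1}_{k}|<1$, one gets $z_{0}(1-z_{0})(\alpha_{0}-\beta_{0})^{2}|(f^{i}(x))''|\leq C|I^{n-1}_{i}|$. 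Hence it remains to bound
$$
S_{1}:=\sum_{i=0}^{q_{n}-1}(x_{i}-\alpha_{i})(\beta_{i}-x_{i})\,|\Psi''(x_{i})|,\qquad
S_{2}:=\sum_{i=0}^{q_{n}-1}|I^{n-1}_{i}|\,|\Psi'(x_{i})|.
$$

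For $S_{1}$ I would expand $\Psi''=\big(\frac{\mathcal{R}''_{\alpha_{i}}}{\mathcal{R}_{\alpha_{i}}}-\frac{\mathcal{R}''_{\beta_{i}}}{\mathcal{R}_{\beta_{i}}}\big)-\big(\big(\frac{\mathcal{R}'_{\alpha_{i}}}{\mathcal{R}_{\alpha_{i}}}\big)^{2}-\big(\frac{\mathcal{R}'_{\beta_{i}}}{\mathcal{R}_{\beta_{i}}}\big)^{2}\big)$ and rewrite the first bracket as $\frac{1}{\mathcal{R}_{\alpha_{i}}}(\mathcal{R}''_{\alpha_{i}}-\mathcal{R}''_{\beta_{i}})+\big(\frac{1}{\mathcal{R}_{\alpha_{i}}}-\frac{1}{\mathcal{R}_{\beta_{i}}}\big)\mathcal{R}''_{\beta_{i}}$. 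Since $\mathcal{R}_{\alpha_{i}}\geq\inf_{\mathbb{S}^{1}}f'>0$, Lemma~\ref{ratiohosilaayirmasi} controls the leading piece:
$$
(x_{i}-\alpha_{i})(\beta_{i}-x_{i})\cdot\frac{|\mathcal{R}''_{\alpha_{i}}-\mathcal{R}''_{\beta_{i}}|}{\mathcal{R}_{\alpha_{i}}}\leq C\,|I^{n-1}_{i}|\,\mathcal{P}_{\gamma}(|I^{n-1}_{i}|).
$$
All the other pieces are of lower order: from the elementary identities $\mathcal{R}'_{\alpha_{i}}(x_{i})=\frac{f'(x_{i})-\mathcal{R}_{\alpha_{i}}(x_{i})}{x_{i}-\alpha_{i}}$, $\mathcal{R}''_{\alpha_{i}}(x_{i})=\frac{2(\mathcal{R}_{\alpha_{i}}(x_{i})-f'(x_{i}))+f''(x_{i})(x_{i}-\alpha_{i})}{(x_{i}-\alpha_{i})^{2}}$ (and their analogues at $\beta_{i}$), together with $|\mathcal{R}_{\alpha_{i}}(x_{i})-f'(x_{i})|\leq\|f''\|_{C^{0}}(x_{i}-\alpha_{i})$ — legitimate because for $\gamma>1$ Theorem~\ref{WZ} makes $f''$ exist and be bounded on $[\xi_{0},\xi_{0}+1]$ — one obtains $\mathcal{R}'_{\alpha_{i}}(x_{i})=\mathcal{O}(1)$, $(x_{i}-\alpha_{i})(\beta_{i}-x_{i})|\mathcal{R}''_{\beta_{i}}(x_{i})|=\mathcal{O}(|I^{n-1}_{i}|)$, and, by Lipschitzness of $f'$, $\big|\frac{1}{\mathcal{R}_{\alpha_{i}}}-\frac{1}{\mathcal{R}_{\beta_{i}}}\big|=\mathcal{O}(|I^{n-1}_{i}|)$; so these contribute $\mathcal{O}(|I^{n-1}_{i}|^{2})$. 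Thus $S_{1}=\mathcal{O}\big(\sum_{i}|I^{n-1}_{i}|\mathcal{P}_{\gamma}(|I^{n-1}_{i}|)\big)+\mathcal{O}\big(\sum_{i}|I^{n-1}_{i}|^{2}\big)$. The sum $S_{2}$ is exactly the quantity bounded for $\sum_{i}|\alpha_{i}-\beta_{i}|\,|\mathrm{d}\Psi(x_{i})/\mathrm{d}x_{i}|$ in the proof of Lemma~\ref{ikkinchihosila}, so $S_{2}=\mathcal{O}\big(d_{n-1}+\mathcal{P}_{\gamma}(d_{n-1})\big)$.

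To finish, one sums over $i$: disjointness of $\{I^{n-1}_{i}\}$ gives $\sum_{i}|I^{n-1}_{i}|^{2}\leq d_{n-1}\sum_{i}|I^{n-1}_{i}|\leq d_{n-1}$, while monotonicity of $\mathcal{P}_{\gamma}$ and $|I^{n-1}_{i}|\leq d_{n-1}$ give $\sum_{i}|I^{n-1}_{i}|\mathcal{P}_{\gamma}(|I^{n-1}_{i}|)\leq\mathcal{P}_{\gamma}(d_{n-1})$. As in Lemma~\ref{ikkinchihosila}, $\mathcal{P}_{\gamma}(d_{n-1})=\mathcal{O}\big((\log\frac{1}{d_{n-1}})^{1-\gamma}\big)$, and $d_{n}=\mathcal{O}(\lambda^{n})$ then turns everything into $\mathcal{O}(n^{-(\gamma-1)})$, which is the asserted bound. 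The main technical obstacle is precisely $S_{1}$: each of $\mathcal{R}''_{\alpha_{i}},\mathcal{R}''_{\beta_{i}},\mathcal{R}'_{\alpha_{i}},\mathcal{R}'_{\beta_{i}}$ is singular at an endpoint of $I^{n-1}_{i}$, so one must keep them paired with the weight $(x_{i}-\alpha_{i})(\beta_{i}-x_{i})$ throughout — in particular invoking the \emph{weighted} estimate of Lemma~\ref{ratiohosilaayirmasi} rather than any pointwise bound on $\mathcal{R}''$ — before estimating; once this bookkeeping is respected, the rest follows the pattern of Lemmas~\ref{birinchihosila}--\ref{ikkinchihosila}.
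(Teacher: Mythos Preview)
Your proof is correct and follows essentially the same approach as the paper: the same chain-rule split into a $\Psi''$-part ($E_{n}$ in the paper, your $S_{1}$) and a $\Psi'\cdot(f^{i})''$-part ($F_{n}$, your $S_{2}$), the same Finzi-based transfer of the weight $z_{0}(1-z_{0})(\alpha_{0}-\beta_{0})^{2}$ to $(x_{i}-\alpha_{i})(\beta_{i}-x_{i})$, and the same use of Lemma~\ref{ratiohosilaayirmasi} for the leading term $(x_{i}-\alpha_{i})(\beta_{i}-x_{i})|\mathcal{R}''_{\alpha_{i}}-\mathcal{R}''_{\beta_{i}}|$. The only noteworthy difference is in the handling of $S_{2}$: the paper retains the $z_{i}(1-z_{i})$ weight in $F_{n}$ and reduces it to the quantity $A_{n}+B_{n}$ of Lemma~\ref{birinchihosila}, obtaining the sharper $\mathcal{O}(n^{-\gamma})$ for that piece, whereas you discard the weight and quote the $C_{n}+D_{n}$ bound of Lemma~\ref{ikkinchihosila}, getting only $\mathcal{O}(n^{-(\gamma-1)})$; since the target is $\mathcal{O}(n^{-(\gamma-1)})$ anyway, this costs nothing. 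Your lower-order estimates in $S_{1}$ are likewise slightly coarser than the paper's $E_{n}^{(1)},E_{n}^{(2)},E_{n}^{(3)}$ bounds (you use $\mathcal{O}(|I_{i}^{n-1}|^{2})$ uniformly rather than separating out $d_{n-1}\Omega(d_{n-1},\gamma)$ and $d_{n-1}\mathcal{P}_{\gamma}(d_{n-1})$), but again this is harmless.
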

\begin{proof}
As above we prove only the first inequality, the second one can be proved analogously.
By differentiating (\ref{ok20}) and (\ref{ok21}) we obtain
\begin{equation*}\label{ok34}
\frac{\mathrm{d}^{2}\widetilde{\Upsilon}_{n}(z_{0})}{\mathrm{d}z^{2}_{0}}
=(\alpha_{0}-\beta_{0})^{2}\cdot\frac{\mathrm{d}^{2}\widetilde{\Upsilon}_{n}(x)}{\mathrm{d}x^{2}},
\end{equation*}
and
\begin{equation*}\label{ok35}
  \frac{\mathrm{d}^{2}\Psi(x_{i})}{\mathrm{d}x^{2}}=\frac{\mathrm{d}^{2}\Psi(x_{i})}{\mathrm{d}x^{2}_{i}}\cdot
  \Big(\frac{\mathrm{d}x_{i}}{\mathrm{d}x}\Big)^{2}+
  \frac{\mathrm{d}\Psi(x_{i})}{\mathrm{d}x_{i}}\cdot\frac{\mathrm{d}^{2}x_{i}}{\mathrm{d}x^{2}}.
\end{equation*}
Thus
$$
z_{0}(1-z_{0})\frac{\mathrm{d}^{2}\widetilde{\Upsilon}_{n}(z_{0})}{\mathrm{d}z^{2}_{0}}=
z_{0}(1-z_{0})(\alpha_{0}-\beta_{0})^{2}\sum_{i=0}^{q_{n}-1}\frac{\mathrm{d}^{2}\Psi(x_{i})}{\mathrm{d}x^{2}_{i}}\cdot
  \Big(\frac{\mathrm{d}x_{i}}{\mathrm{d}x}\Big)^{2}+
$$
$$
z_{0}(1-z_{0})(\alpha_{0}-\beta_{0})^{2}\sum_{i=0}^{q_{n}-1}
\frac{\mathrm{d}\Psi(x_{i})}{\mathrm{d}x_{i}}\cdot\frac{\mathrm{d}^{2}x_{i}}{\mathrm{d}x^{2}}:=E_{n}+F_{n}.
$$
Next we estimate $E_{n}$ and $F_{n}.$ Using  (\ref{ok23}) we get
\begin{equation}\label{ok36}
z_{0}(1-z_{0})(\alpha_{0}-\beta_{0})^{2}\Big(\frac{\mathrm{d}x_{i}}{\mathrm{d}x}\Big)^{2}
\leq e^{4\nu}z_{i}(1-z_{i})(\alpha_{i}-\beta_{i})^{2}
\end{equation}
for any $0\leq i<q_{n}.$ By differentiating (\ref{ok25}) we obtain
$$
\frac{\mathrm{d}^{2}\Psi(x_{i})}{\mathrm{d}x^{2}_{i}}=
-\frac{1}{\mathcal{R}^{2}_{\alpha_{i}}(x_{i})}\cdot \Big(\frac{\mathrm{d}\mathcal{R}_{\alpha_{i}}(x_{i})}{\mathrm{d}x_{i}}\Big)^{2}+
\frac{1}{\mathcal{R}_{\alpha_{i}}(x_{i})}\cdot \frac{\mathrm{d}^{2}\mathcal{R}_{\alpha_{i}}(x_{i})}{\mathrm{d}x^{2}_{i}}+
$$
\begin{equation}\label{ok37}
\frac{1}{\mathcal{R}^{2}_{\beta_{i}}(x_{i})}\cdot \Big(\frac{\mathrm{d}\mathcal{R}_{\beta_{i}}(x_{i})}{\mathrm{d}x_{i}}\Big)^{2}-
\frac{1}{\mathcal{R}_{\beta_{i}}(x_{i})}\cdot \frac{\mathrm{d}^{2}\mathcal{R}_{\beta_{i}}(x_{i})}{\mathrm{d}x^{2}_{i}}=
\end{equation}
$$
\Big(\frac{1}{\mathcal{R}^{2}_{\beta_{i}}(x_{i})}-
\frac{1}{\mathcal{R}^{2}_{\alpha_{i}}(x_{i})}\Big)
\cdot \Big(\frac{\mathrm{d}\mathcal{R}_{\alpha_{i}}(x_{i})}{\mathrm{d}x_{i}}\Big)^{2}+
\Big(\Big(\frac{\mathrm{d}\mathcal{R}_{\beta_{i}}(x_{i})}{\mathrm{d}x_{i}}\Big)^{2}-
 \Big(\frac{\mathrm{d}\mathcal{R}_{\alpha_{i}}(x_{i})}{\mathrm{d}x_{i}}\Big)^{2}\Big)
\cdot\frac{1}{\mathcal{R}^{2}_{\beta_{i}}(x_{i})}+
$$
$$
\Big(\frac{1}{\mathcal{R}_{\alpha_{i}}(x_{i})}-\frac{1}{\mathcal{R}_{\beta_{i}}(x_{i})}\Big)
\cdot \frac{\mathrm{d}^{2}\mathcal{R}_{\alpha_{i}}(x_{i})}{\mathrm{d}x^{2}_{i}}+
\Big(\frac{\mathrm{d}^{2}\mathcal{R}_{\alpha_{i}}(x_{i})}{\mathrm{d}x^{2}_{i}}- \frac{\mathrm{d}^{2}\mathcal{R}_{\beta_{i}}(x_{i})}{\mathrm{d}x^{2}_{i}}\Big)
\cdot\frac{1}{\mathcal{R}_{\beta_{i}}(x_{i})}:=E^{(1)}_{n}+E^{(2)}_{n}+E^{(3)}_{n}+E^{(4)}_{n}.
$$
We multiply each  $E^{(s)}_{n},$ $s=1,2,3,4$  to the right hand side of (\ref{ok36})
and estimate them separately.
Relations (\ref{ok26}) and (\ref{ok31aa}) imply
\begin{equation}\label{ok38}
e^{4v}\sum_{i=0}^{q_{n}-1}z_{i}(1-z_{i})(\alpha_{i}-\beta_{i})^{2}|E^{(1)}_{n}|\leq
C d_{n-1}\Omega(d_{n-1}, \gamma).
\end{equation}
From the second inequality of Lemma \ref{ratiohosilaayirmasi} it follows
\begin{equation}\label{ok39}
e^{4v}\sum_{i=0}^{q_{n}-1}z_{i}(1-z_{i})(\alpha_{i}-\beta_{i})^{2}|E^{(2)}_{n}|\leq C d_{n-1}\mathcal{P}_{\gamma}(d_{n-1}).
\end{equation}
As we have seen in the proof of Lemma \ref{ratiohosilaayirmasi}, we have
$$
\frac{\mathrm{d}^{2}\mathcal{R}_{\alpha_{i}}(x_{i})}{\mathrm{d}x^{2}_{i}}=
2\int_{\alpha_{i}}^{x_{i}}\frac{(f''(x_{i})-f''(y))(y-\alpha_{i})}{(x_{i}-\alpha_{i})^{3}}dy.
$$
By definition of $(1-z_{i})$ and the second inequality of Lemma \ref{ratiohosilaayirmasi},
we obtain
$$
\Big|(1-z_{i})(\alpha_{i}-\beta_{i})\frac{\mathrm{d}^{2}\mathcal{R}_{\alpha_{i}}(x_{i})}{\mathrm{d}x^{2}_{i}}\Big|
\leq 2 \int_{\alpha_{i}}^{x_{i}}\frac{|f''(x_{i})-f''(y)|(y-\alpha_{i})}{(x_{i}-\alpha_{i})^{2}}dy
\leq C \mathcal{P}_{\gamma}(|I^{n-1}_{i}|).
$$
Therefore, the last two relations and inequality (\ref{ok26}) imply
\begin{equation}\label{ok39i}
e^{4v}\sum_{i=0}^{q_{n}-1}z_{i}(1-z_{i})(\alpha_{i}-\beta_{i})^{2}
|E^{(3)}_{n}|\leq C \mathcal{P}_{\gamma}(d_{n-1})\Omega(d_{n-1}, \gamma).
\end{equation}
From the first  inequality of  Lemma \ref{ratiohosilaayirmasi} it follows
\begin{equation}\label{ok40}
e^{4v}\sum_{i=0}^{q_{n}-1}z_{i}(1-z_{i})(\alpha_{i}-\beta_{i})^{2}|E^{(4)}_{n}|
\leq
\end{equation}
$$
C\sum_{i=0}^{q_{n}-1}(x_{i}-\alpha_{i})(\beta_{i}-x_{i})\Big|\frac{\mathrm{d}^{2}\mathcal{R}_{\alpha_{i}}(x_{i})}{\mathrm{d}x^{2}_{i}}- \frac{\mathrm{d}^{2}\mathcal{R}_{\beta_{i}}(x_{i})}{\mathrm{d}x^{2}_{i}}\Big|
\leq C \mathcal{P}_{\gamma}(d_{n-1}).
$$
Relations (\ref{ok36})-(\ref{ok40}) then imply the following estimate
$$
|E_{n}|=\mathcal{O}\Big(\frac{1}{n^{\gamma-1}}\Big).
$$
We pass to estimate $F_{n}.$ First we show  the validity of the following inequalities
\begin{equation}\label{ok41}
e^{-2\nu}\inf_{x\in \mathbb{S}^{1}}\Big|\frac{f''(x)}{f'(x)}\Big|\cdot|\alpha_{i}-\beta_{i}|\leq
(\alpha_{0}-\beta_{0})^{2}\Big|\frac{\mathrm{d}^{2}x_{i}}{\mathrm{d}x^{2}}\Big|\leq e^{2\nu}\sup_{x\in \mathbb{S}^{1}}\Big|\frac{f''(x)}{f'(x)}\Big|\cdot|\alpha_{i}-\beta_{i}|
\end{equation}
for any $0\leq i< q_{n}.$
Consider the function
$$
\mathcal{H}_{i}(x)=\sum_{j=0}^{i-1}(f^{j}(x))'
$$
where $x\in I_{0}^{n-1}$ and $0\leq i< q_{n}.$
Using Finzi's inequality it can be easily  shown
\begin{equation}\label{ok42}
e^{-\nu}\leq\frac{\mathcal{H}_{i}(x)}{\mathcal{H}_{i}(y)}\leq e^{\nu}
\end{equation}
for any $x,y\in I_{0}^{n-1}$  and $0\leq i< q_{n}.$
On the other hand
$$
\int_{I_{0}^{n-1}}\mathcal{H}_{i}(x)dx=\sum_{j=0}^{i-1}|I_{j}^{n-1}|.
$$
This and  inequality  (\ref{ok42}) imply
\begin{equation}\label{ok43}
e^{-\nu}\frac{1}{|I_{0}^{n-1}|}\sum_{j=0}^{i-1}|I_{j}^{n-1}|\leq \mathcal{H}_{i}(x)\leq e^{\nu}\frac{1}{|I_{0}^{n-1}|}\sum_{j=0}^{i-1}|I_{j}^{n-1}|.
\end{equation}
We find $\frac{\mathrm{d}^{2}x_{i}}{\mathrm{d}x^{2}}:$
$$
\frac{\mathrm{d}^{2}x_{i}}{\mathrm{d}x^{2}}=(f^{i}(x))'\sum_{j=0}^{i-1}\frac{f''(x_{j})}{f'(x_{j})}(f^{j}(x))'.
$$
Inequalities (\ref{ok23}) and (\ref{ok43}) imply
$$
(\alpha_{0}-\beta_{0})^{2}\Big|\frac{\mathrm{d}^{2}x_{i}}{\mathrm{d}x^{2}}\Big|
\leq  \sup_{x\in \mathbb{S}^{1}}\Big|\frac{f''(x)}{f'(x)}\Big|(\alpha_{0}-\beta_{0})^{2} (f^{i}(x))'\,\mathcal{H}_{i}(x)
\leq e^{2\nu}\sup_{x\in \mathbb{S}^{1}} \Big|\frac{f''(x)}{f'(x)}\Big|\cdot|\alpha_{i}-\beta_{i}|.
$$
The left hand side of  (\ref{ok41}) is proved  similarly.
Now we  continue  estimating $F_{n}.$
After above  preparations we have
$$
|F_{n}|\leq C\sum_{i=0}^{q_{n}-1}
z_{i}(1-z_{i})|\alpha_{i}-\beta_{i}|\Big|\frac{\mathrm{d}\Psi(x_{i})}{\mathrm{d}x_{i}}\Big|.
$$
This and  inequalities (\ref{ok25})-(\ref{ok28}) imply
$$
|F_{n}|\leq C(A_{n}+B_{n}).
$$
As we have shown in the proof of Lemma \ref{birinchihosila}
$$
A_{n}+B_{n}=\mathcal{O}\Big(\frac{1}{n^{\gamma}}\Big)
$$
for sufficiently large $n.$ Therefore
$$
|F_{n}|\leq \frac{C}{n^{\gamma}}
$$
for sufficiently large $n.$
Finally we have
$$
|E_{n}|+|F_{n}|=\mathcal{O}(\frac{1}{n^{\gamma-1}})
$$
for sufficiently large $n.$
 Lemma \ref{ikkinchihosila2} is therefore completely proved.
\end{proof}

\section{Comparing relative coordinates with M\"{o}bius transformations}
In this section we show that the relative coordinates $z_{q_{n}}(z_{0})$ and
$\hat{z}_{q_{n-1}}(\bar{z}_{0})$ are approximated by M\"{o}bius transformations. To characterize these
approximations  we define a M\"{o}bius map
$\mathcal{M}_{\mathcal{T}}$ as follows
\begin{equation}\label{ok44}
\mathcal{M}_{\mathcal{T}}(z)=\frac{z \mathcal{T}}{1+z(\mathcal{T}-1)}.
\end{equation}
\begin{lemm}\label{relationmobius}
Let $f\in \mathrm{D}^{1+\mathcal{Z}_{\gamma}}(\mathbb{S}^{1}\setminus\{\xi_{0}\})$ and $\gamma\in(0, 1].$
Then there exists a constant $C>0$ and a natural number $N_{0}=N_{0}(f)$
such that
$$
\|z_{q_{n}}-\mathcal{M}_{\widetilde{m}_n}\|_{C^{1}([0,1])}\leq \frac{C}{n^{\gamma}},\,\,\,\,\,\,\,\,\,\,
\|\hat{z}_{q_{n-1}}- \mathcal{M}_{\widehat{m}_n}\|_{C^{1}([0,1])}\leq \frac{C}{n^{\gamma}}
$$
for all $n\geq N_{0}.$
\end{lemm}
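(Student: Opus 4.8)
The plan is to express the relative coordinate $z_{q_n}$ explicitly as a function of $z_0$ and of $\widetilde{\Upsilon}_n$, and then to read off the $C^1$-estimate directly from Lemmas \ref{nolinchihosila} and \ref{birinchihosila}. First I would unwind the definitions. The interval $I_0^{n-1}$ has endpoints $\beta_0=\xi_0$ and $\alpha_0=\xi_{q_{n-1}}$, and the point $x\in I_0^{n-1}$ with relative coordinate $z_0$ splits it so that $|[x,\xi_0]|=z_0|I_0^{n-1}|$ and $|[\xi_{q_{n-1}},x]|=(1-z_0)|I_0^{n-1}|$; applying $f^{q_n}$ carries $I_0^{n-1}$ onto $I_{q_n}^{n-1}$ and, writing the lengths of the images through the relative coordinate $z_{q_n}$ of $f^{q_n}(x)$, one gets $|f^{q_n}([x,\xi_0])|=z_{q_n}|I_{q_n}^{n-1}|$ and $|f^{q_n}([\xi_{q_{n-1}},x])|=(1-z_{q_n})|I_{q_n}^{n-1}|$. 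Since the distortion $\mathcal{R}$ is a ratio of lengths, the common factors $|I_0^{n-1}|$, $|I_{q_n}^{n-1}|$ cancel and
\[
\frac{\mathcal{R}([\xi_{q_{n-1}},x];\,f^{q_n})}{\mathcal{R}([x,\xi_0];\,f^{q_n})}=\frac{z_0\,(1-z_{q_n})}{z_{q_n}\,(1-z_0)} .
\]
Hence, by the definition (\ref{r01}) of $\widetilde{\Upsilon}_n$, solving for $z_{q_n}$ yields the projective formula
\[
z_{q_n}(z_0)=\frac{\widetilde{m}_n\,z_0}{\widetilde{m}_n\,z_0+e^{\widetilde{\Upsilon}_n(z_0)}(1-z_0)} ,
\]
which collapses exactly to $\mathcal{M}_{\widetilde{m}_n}(z_0)$ from (\ref{ok44}) when $\widetilde{\Upsilon}_n\equiv 0$. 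Thus the whole discrepancy is encoded in $\widetilde{\Upsilon}_n$, and the analogous computation gives $\hat{z}_{q_{n-1}}(\bar{z}_0)=\widehat{m}_n\bar{z}_0\bigl(\widehat{m}_n\bar{z}_0+e^{\widehat{\Upsilon}_n(\bar{z}_0)}(1-\bar{z}_0)\bigr)^{-1}$.

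For the $C^0$-estimate I would subtract $\mathcal{M}_{\widetilde{m}_n}(z_0)$ from the formula for $z_{q_n}(z_0)$ over the common denominator; the numerator then carries a factor $z_0(1-z_0)\bigl(1-e^{\widetilde{\Upsilon}_n(z_0)}\bigr)$. Because $\widetilde{m}_n$ is bounded away from $0$ and $\infty$ (as noted right after its definition, using that $f'$ has bounded variation and the intervals $I_i^{n-1}$ are disjoint) and $\|\widetilde{\Upsilon}_n\|_{C^0}\to 0$ by Lemma \ref{nolinchihosila}, both denominators $\widetilde{m}_n z_0+(1-z_0)$ and $\widetilde{m}_n z_0+e^{\widetilde{\Upsilon}_n(z_0)}(1-z_0)$ are bounded below by a positive constant uniformly in $z_0\in[0,1]$ and $n\ge N_0$; hence $\|z_{q_n}-\mathcal{M}_{\widetilde{m}_n}\|_{C^0}=\mathcal{O}(\|\widetilde{\Upsilon}_n\|_{C^0})=\mathcal{O}(n^{-\gamma})$.

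For the $C^1$-estimate, differentiating the projective formula gives
\[
\frac{\mathrm{d}z_{q_n}}{\mathrm{d}z_0}=\frac{\widetilde{m}_n\,e^{\widetilde{\Upsilon}_n(z_0)}\bigl(1-z_0(1-z_0)\,\widetilde{\Upsilon}_n'(z_0)\bigr)}{\bigl(\widetilde{m}_n z_0+e^{\widetilde{\Upsilon}_n(z_0)}(1-z_0)\bigr)^{2}} ,
\]
which reduces to $\mathcal{M}_{\widetilde{m}_n}'(z_0)=\widetilde{m}_n\bigl(\widetilde{m}_n z_0+(1-z_0)\bigr)^{-2}$ when $\widetilde{\Upsilon}_n\equiv 0$. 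Subtracting, I would split the difference into three pieces: one proportional to $e^{\widetilde{\Upsilon}_n}-1$, one coming from the difference of the two squared denominators (again controlled by $e^{\widetilde{\Upsilon}_n}-1$, since the denominators differ by $(e^{\widetilde{\Upsilon}_n}-1)(1-z_0)$), and one equal to $-\widetilde{m}_n e^{\widetilde{\Upsilon}_n}\,z_0(1-z_0)\,\widetilde{\Upsilon}_n'$ over the squared denominator. The first two are $\mathcal{O}(\|\widetilde{\Upsilon}_n\|_{C^0})=\mathcal{O}(n^{-\gamma})$ by Lemma \ref{nolinchihosila}. The third term is the only delicate point: $\widetilde{\Upsilon}_n'$ itself need not stay bounded near $z_0=0,1$, but the weight $z_0(1-z_0)$ that tames it appears automatically from the projective form of $z_{q_n}$, so this term is $\mathcal{O}\bigl(\|z_0(1-z_0)\widetilde{\Upsilon}_n'\|_{C^0}\bigr)=\mathcal{O}(n^{-\gamma})$ by Lemma \ref{birinchihosila}. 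Combining the two displays with the triangle inequality gives $\|z_{q_n}-\mathcal{M}_{\widetilde{m}_n}\|_{C^1([0,1])}=\mathcal{O}(n^{-\gamma})$. The bound for $\hat{z}_{q_{n-1}}$ is obtained verbatim, with $\widetilde{\Upsilon}_n,\widetilde{m}_n,q_n$ replaced by $\widehat{\Upsilon}_n,\widehat{m}_n,q_{n-1}$ and the second inequalities of Lemmas \ref{nolinchihosila} and \ref{birinchihosila} invoked.
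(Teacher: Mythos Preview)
Your proposal is correct and follows essentially the same route as the paper: derive the projective formula $z_{q_n}(z_0)=\widetilde{m}_n z_0\bigl(\widetilde{m}_n z_0+e^{\widetilde{\Upsilon}_n}(1-z_0)\bigr)^{-1}$ from the distortion-ratio identity, differentiate it to obtain the expression carrying the weight $z_0(1-z_0)\widetilde{\Upsilon}_n'$, and then read off the $C^0$- and $C^1$-bounds from Lemmas~\ref{nolinchihosila} and~\ref{birinchihosila}. Your write-up is in fact a bit more explicit than the paper's about why the denominators are uniformly bounded below and how the subtraction splits, but the argument is the same.
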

\begin{proof}
We prove the first inequality. For this  we  find the explicit form of $z_{q_{n}}(z_{0}).$
After simple  computations   we get
$$
\frac{1-z_{q_{n}}}{z_{q_{n}}}\cdot\frac{z_{0}}{1-z_{0}}=\frac{\mathcal{R}([\xi_{q_{n-1}}, x];f^{q_{n}})}{\mathcal{R}([x, \xi_{0}];f^{q_{n}})}.
$$
On the other hand, the relation (\ref{r01})  implies
$$
\frac{\mathcal{R}([\xi_{q_{n-1}}, x];f^{q_{n}})}{\mathcal{R}([x, \xi_{0}];f^{q_{n}})}=
\frac{1}{\widetilde{m}_{n}}\exp(\widetilde{\Upsilon}_{n}(z_{0})).
$$
Therefore, the last two relations imply
$$
\frac{1-z_{q_{n}}}{z_{q_{n}}}\cdot\frac{z_{0}}{1-z_{0}}=
\frac{1}{\widetilde{m}_{n}}\exp(\widetilde{\Upsilon}_{n}(z_{0})).
$$
Solving for $z_{q_{n}},$ we get
\begin{equation}\label{ok47}
z_{q_{n}}(z_{0})=\frac{z_{0}\widetilde{m}_{n}}{(1-z_{0})\exp(\widetilde{\Upsilon}_{n}(z_{0}))+z_{0}\widetilde{m}_{n}}.
\end{equation}
Using Lemma \ref{nolinchihosila} we get
\begin{equation}\label{ok48}
\max_{z_{0}\in [0,1]}\Big|z_{q_{n}}(z_{0})-\mathcal{M}_{\widetilde{m}_n}(z_{0})\Big|\leq \frac{C}{n^{\gamma}}.
\end{equation}
for all $n\geq 1.$
By differentiating (\ref{ok47}) we obtain
$$
z'_{q_{n}}(z_{0})=\frac{\Big(1-z_{0}(1-z_{0})\widetilde{\Upsilon}'_{n}(z_{0})\Big)\widetilde{m}_{n}
\exp(\widetilde{\Upsilon}_{n}(z_{0}))}{\Big((1-z_{0})\exp(\widetilde{\Upsilon}_{n}(z_{0}))+z_{0}\widetilde{m}_{n}\Big)^{2}}.
$$
Utilizing  Lemma \ref{birinchihosila} 
\begin{equation}\label{ok50}
\max_{z_{0}\in [0,1]}\Big|z'_{q_{n}}(z_{0})-\mathcal{M}'_{\widetilde{m}_n}(z_{0})\Big|\leq \frac{C}{n^{\gamma}}
\end{equation}
for all $n\geq N_{0}.$
Inequalities (\ref{ok48}) and (\ref{ok50}) imply the proof of first inequality of Lemma \ref{relationmobius}.
The proof of the second inequality is similar.
\end{proof}
Now we consider the case $\gamma>1.$
\begin{lemm}\label{relationmobius2}
Let $f\in \mathrm{D}^{1+\mathcal{Z}_{\gamma}}(\mathbb{S}^{1}\setminus\{\xi_{0}\})$ and  $\gamma>1.$
Then there exists a constant $C>0$ and natural number $N_{0}=N_{0}(f)$ such that for all $n\geq N_{0}$
the following inequalities hold
\begin{equation}\label{ok51}
\|z_{q_{n}}-\mathcal{M}_{m_n}\|_{C^{1}([0,1])}\leq  \frac{C}{n^{\gamma}},\,\,\,\,\,\,\,\,\,\,
\|\hat{z}_{q_{n-1}}- \mathcal{M}_{\frac{c_{n}}{m_n}}\|_{C^{1}([0,1])}\leq \frac{C}{n^{\gamma}}.
\end{equation}
Moreover,
 \begin{equation}\label{ok56}
\|z''_{q_{n}}-\mathcal{M}''_{m_n}\|_{C^{0}([0,1])}\leq  \frac{C}{n^{\gamma-1}},\,\,\,\,\,\,\,\,\,\,
\|\hat{z}''_{q_{n-1}}- \mathcal{M}''_{\frac{c_{n}}{m_n}}\|_{C^{0}([0,1])}\leq  \frac{C}{n^{\gamma-1}}.
\end{equation}
\end{lemm}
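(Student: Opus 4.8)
The plan is to build on the proof of Lemma~\ref{relationmobius}. Note first that Lemmas~\ref{nolinchihosila} and~\ref{birinchihosila} hold for every $\gamma\in(0,+\infty)$, so the one-variable argument used there — differentiate the explicit formula~(\ref{ok47}) once and feed in $\|\widetilde\Upsilon_n\|_{C^0}+\|z_0(1-z_0)\widetilde\Upsilon'_n\|_{C^0}=O(n^{-\gamma})$ — already yields $\|z_{q_n}-\mathcal M_{\widetilde m_n}\|_{C^1([0,1])}\le C/n^\gamma$ and $\|\hat z_{q_{n-1}}-\mathcal M_{\widehat m_n}\|_{C^1([0,1])}\le C/n^\gamma$ in the regime $\gamma>1$ too. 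Hence~(\ref{ok51}) and~(\ref{ok56}) will follow once I (a) show that $\widetilde m_n$ and $\widehat m_n$ are exponentially close to $m_n$ and $c_n/m_n$ respectively, and (b) control the second derivative of the right side of~(\ref{ok47}).

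For (a): since $\gamma>1$, Theorem~\ref{WZ} gives $f'\in C^1(\mathbb{S}^1\setminus\{\xi_0\})$ with bounded $f''$. I would use $f'(\xi_i)-f'(\xi_{i+q_{n-1}})=\int_{I_i^{n-1}}f''$ and compare $\frac{1}{2f'(\xi_i)}\int_{I_i^{n-1}}f''$ with $\int_{I_i^{n-1}}\frac{f''}{2f'}$; the integrands differ by $O(|I_i^{n-1}|)$ since $f'$ is Lipschitz, so $\bigl|\log\widetilde m_n-\sum_i\int_{I_i^{n-1}}\frac{f''}{2f'}\bigr|=O\bigl(\sum_i|I_i^{n-1}|^2\bigr)=O(\lambda^n)$, the mesh of $\mathbb{P}_n$ decaying exponentially; that is $|\widetilde m_n-m_n|\le C\lambda^n$. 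An analogous computation for $\widehat m_n$, invoking over the whole dynamical partition $\mathbb{P}_n$ the cocycle identity for $\log f'$ with the break at $\xi_0$ supplying the factor $c_n=c^{(-1)^n}$, gives $|\widehat m_n-c_n m_n^{-1}|\le C\lambda^n$. Since $\widetilde m_n,m_n$ (resp. $\widehat m_n,c_n/m_n$) stay in a fixed compact subset of $(0,+\infty)$ and $\mathcal T\mapsto\mathcal M_{\mathcal T}$ is smooth there, $\|\mathcal M_{\widetilde m_n}-\mathcal M_{m_n}\|_{C^2([0,1])}\le C\lambda^n$ and similarly for $\mathcal M_{\widehat m_n},\mathcal M_{c_n/m_n}$; with the $C^1$ bounds above this proves~(\ref{ok51}).

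For (b): write the denominator of~(\ref{ok47}) as $D_n(z_0)=(1-z_0)\exp(\widetilde\Upsilon_n(z_0))+z_0\widetilde m_n$. Since $\widetilde\Upsilon_n\to 0$ uniformly (Lemma~\ref{nolinchihosila}), $D_n$ is bounded away from $0$ on $[0,1]$ uniformly in $n$, so $z''_{q_n}$ is a genuinely smooth expression. Substituting $\widetilde\Upsilon_n\equiv 0$ into the twice-differentiated~(\ref{ok47}) recovers exactly $\mathcal M''_{\widetilde m_n}$, and comparing term by term, every discrepancy between $z''_{q_n}$ and $\mathcal M''_{\widetilde m_n}$ carries a factor of one of $\widetilde\Upsilon_n$, $z_0(1-z_0)\widetilde\Upsilon'_n$, $\widetilde\Upsilon'_n$, or $z_0(1-z_0)\widetilde\Upsilon''_n$ — the crucial point being that $\widetilde\Upsilon''_n$ never appears by itself, only through $\frac{d}{dz_0}\bigl(z_0(1-z_0)\widetilde\Upsilon'_n\bigr)=(1-2z_0)\widetilde\Upsilon'_n+z_0(1-z_0)\widetilde\Upsilon''_n$. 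By Lemmas~\ref{nolinchihosila},~\ref{birinchihosila},~\ref{ikkinchihosila},~\ref{ikkinchihosila2} these four quantities are $O(n^{-\gamma})$, $O(n^{-\gamma})$, $O(n^{-(\gamma-1)})$, $O(n^{-(\gamma-1)})$, so $\|z''_{q_n}-\mathcal M''_{\widetilde m_n}\|_{C^0([0,1])}=O(n^{-(\gamma-1)})$; adding $\|\mathcal M''_{\widetilde m_n}-\mathcal M''_{m_n}\|_{C^0}=O(\lambda^n)$ from (a) gives the first half of~(\ref{ok56}). The statements for $\hat z_{q_{n-1}}$ come out verbatim after replacing $\widetilde\Upsilon_n,\widetilde m_n$ by $\widehat\Upsilon_n,\widehat m_n$ and using the second halves of Lemmas~\ref{nolinchihosila}--\ref{ikkinchihosila2}.

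The hard part will be the bookkeeping in (b): after two differentiations of the rational map~(\ref{ok47}) one must verify that the only occurrence of $\widetilde\Upsilon''_n$ is inside the combination $z_0(1-z_0)\widetilde\Upsilon''_n$ controlled by Lemma~\ref{ikkinchihosila2}, and that no leftover power of $1/D_n$ blows up at the endpoints $z_0\in\{0,1\}$ where $z_0(1-z_0)$ degenerates; the uniform lower bound on $D_n$ from Lemma~\ref{nolinchihosila} is precisely what saves the argument. A secondary technical point is fixing, in (a), the exact cocycle identity matching the index range $0\le j<q_{n-1}$ that defines $\widehat m_n$ to the range $0\le i<q_n$ appearing in $m_n$.
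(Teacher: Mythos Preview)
Your proposal is correct and follows essentially the same route as the paper's own proof: recycle the explicit formula~(\ref{ok47}) and its derivative from Lemma~\ref{relationmobius}, establish $\widetilde m_n=m_n+O(\lambda^n)$ and $\widehat m_n=c_n/m_n+O(\lambda^n)$ via $f'\in C^1$ and the partition identity $\int_{\mathbb S^1}\frac{f''}{2f'}=\log c$, then differentiate once more and feed in Lemmas~\ref{nolinchihosila}--\ref{ikkinchihosila2}. Your structural remark that $\widetilde\Upsilon''_n$ enters only through $z_0(1-z_0)\widetilde\Upsilon''_n$ (so Lemma~\ref{ikkinchihosila2} suffices) and that the bare $\widetilde\Upsilon'_n$ terms are handled by Lemma~\ref{ikkinchihosila} is exactly the content of the paper's displayed formula for $z''_{q_n}$, just stated more conceptually; the paper simply writes out that second derivative in full and reads off the same conclusion.
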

\begin{proof}
To prove this lemma we use the explicit forms of $z_{q_{n}}(z_{0}),$ $z'_{q_{n}}(z_{0})$
and $\hat{z}_{q_{n-1}}(\bar{z}_{0}),$ $\hat{z}'_{q_{n-1}}(\bar{z}_{0}).$
As we have shown above
\begin{equation}\label{ok52}
z_{q_{n}}(z_{0})=\frac{z_{0}\widetilde{m}_{n}}{(1-z_{0})\exp(\widetilde{\Upsilon}_{n}(z_{0}))+z_{0}\widetilde{m}_{n}},
\end{equation}
\begin{equation}\label{ok49}
z'_{q_{n}}(z_{0})=\frac{\Big(1-z_{0}(1-z_{0})\widetilde{\Upsilon}'_{n}(z_{0})\Big)\widetilde{m}_{n}
\exp(\widetilde{\Upsilon}_{n}(z_{0}))}{\Big((1-z_{0})\exp(\widetilde{\Upsilon}_{n}(z_{0}))+z_{0}\widetilde{m}_{n}\Big)^{2}}.
\end{equation}
The same manner as in the proof of above lemma, it can be found
\begin{equation}\label{ok53}
\hat{z}_{q_{n-1}}(\bar{z}_{0})=\frac{\bar{z}_{0}\widehat{m}_{n}}{(1-\bar{z}_{0})\exp(\widehat{\Upsilon}_{n}(\bar{z}_{0}))+\bar{z}_{0}\widehat{m}_{n}}.
\end{equation}
By differentiating this we obtain
\begin{equation}\label{ok49a}
\hat{z}'_{q_{n-1}}(\bar{z}_{0})=\frac{\Big(1-\bar{z}_{0}(1-\bar{z}_{0})\widehat{\Upsilon}'_{n}(\bar{z}_{0})\Big)\widehat{m}_{n}
\exp(\widehat{\Upsilon}_{n}(\bar{z}_{0}))}{\Big((1-\bar{z}_{0})\exp(\widehat{\Upsilon}_{n}(\bar{z}_{0}))+\bar{z}_{0}\widehat{m}_{n}\Big)^{2}}.
\end{equation}
Now we compare $\widetilde{m}_{n},$ $\widehat{m}_{n}$ with $m_{n},$  $c_{n}m^{-1}_{n}$ respectively.
By assumption of lemma $\gamma>1,$ therefore according to Theorem \ref{WZ},
$f'$ is differentiable. Thus
$$
\Big|\log \widetilde{m}_{n}- \log m_{n}\Big|\leq
\frac{1}{2}\sum_{i=0}^{q_{n}-1}\int_{I^{n-1}_{i}}\Big|\frac{f''(x)}{f'(\xi_{i})} -\frac{f''(x)}{f'(x)}\Big|dx.
$$
Theorem \ref{modofcon} implies
 $$
 \int_{I^{n-1}_{i}}\Big|\frac{f''(x)}{f'(\xi_{i})} -\frac{f''(x)}{f'(x)}\Big|dx\leq
 \frac{\Omega(d_{n-1}, \gamma)}{(\underset{\mathbb{S}^{1}}{\inf} f'(x))^{2}}
 \int_{I^{n-1}_{i}}|f''(x)|dx.
 $$
 It is clear that $\Omega(d_{n-1}, \gamma)=\mathcal{O}(\lambda^{n})$ for $\gamma>1.$
 Hence
 \begin{equation}\label{ok54}
 \widetilde{m}_{n}=m_{n}+\mathcal{O}(\lambda^{n}).
 \end{equation}
 Similarly  it can be shown
 $$
 \widehat{m}_{n}=\exp\Big((-1)^{n}\sum_{j=0}^{q_{n-1}-1}\int_{I^{n}_{j}}\frac{f''(x)}{2f'(x)}dx\Big)+\mathcal{O}(\lambda^{n}).
 $$
   On the other hand
 $$
 m_{n}\cdot\exp\Big((-1)^{n}\sum_{j=0}^{q_{n-1}-1}\int_{I^{n}_{j}}\frac{f''(x)}{2f'(x)}dx\Big)=
 \exp\Big((-1)^{n}\int_{\mathbb{S}^{1}}\frac{f''(x)}{2f'(x)}dx\Big)=c_{n}.
 $$
Therefore
\begin{equation}\label{ok55}
 \widetilde{m}_{n}=\frac{c_{n}}{m_{n}}+\mathcal{O}(\lambda^{n}).
 \end{equation}
Finally, relations (\ref{ok52})-(\ref{ok55}) together with Lemmas \ref{nolinchihosila} and  \ref{birinchihosila}
imply the first assertion of  Lemma \ref{relationmobius2}.
To prove the second assertion,  we find the explicit forms of $z''_{q_{n}}$ and $\hat{z}''_{q_{n-1}}$
as follows
$$
z''_{q_{n}}(z_{0})=\frac{\widetilde{m}_{n}\exp(\widetilde{\Upsilon}_{n}(z_{0}))\Big(\widetilde{\Upsilon}'_{n}(z_{0})
\Big(2z_{0}-z_{0}(1-z_{0})\widetilde{\Upsilon}'_{n}(z_{0})\Big)
-z_{0}(1-z_{0})\widetilde{\Upsilon}''_{n}(z_{0})\Big)}
{\Big((1-z_{0})\exp(\widetilde{\Upsilon}_{n}(z_{0}))+z_{0}\widetilde{m}_{n}\Big)^{2}}-
$$
$$
\frac{2\widetilde{m}_{n}\exp(\widetilde{\Upsilon}_{n}(z_{0}))\Big(1-z_{0}(1-z_{0})
\widetilde{\Upsilon}'_{n}(z_{0})\Big)\Big(\widetilde{m}_{n}-\exp(\widetilde{\Upsilon}_{n}(z_{0}))+(1-z_{0})\widetilde{\Upsilon}'_{n}(z_{0}) \Big)}{\Big((1-z_{0})\exp(\widetilde{\Upsilon}_{n}(z_{0}))+z_{0}\widetilde{m}_{n}\Big)^{3}}.
$$
Similarly
$$
\hat{z}''_{q_{n}}(\bar{z}_{0})=\frac{\widehat{m}_{n}\exp(\widehat{\Upsilon}_{n}(\bar{z}_{0}))\Big(\widehat{\Upsilon}'_{n}(\bar{z}_{0})
\Big(2\bar{z}_{0}-\bar{z}_{0}(1-\bar{z}_{0})\widehat{\Upsilon}'_{n}(\bar{z}_{0})\Big)
-\bar{z}_{0}(1-\bar{z}_{0})\widehat{\Upsilon}''_{n}(\bar{z}_{0})\Big)}
{\Big((1-\bar{z}_{0})\exp(\widehat{\Upsilon}_{n}(\bar{z}_{0}))+\bar{z}_{0}\widehat{m}_{n}\Big)^{2}}-
$$
$$
\frac{2\widehat{m}_{n}\exp(\widehat{\Upsilon}_{n}(\bar{z}_{0}))\Big(1-\bar{z}_{0}(1-\bar{z}_{0})
\widehat{\Upsilon}'_{n}(\bar{z}_{0})\Big)\Big(\widehat{m}_{n}-\exp(\widehat{\Upsilon}_{n}(\bar{z}_{0}))+(1-\bar{z}_{0})
\widehat{\Upsilon}'_{n}(\bar{z}_{0}) \Big)}{\Big((1-\bar{z}_{0})\exp(\widehat{\Upsilon}_{n}(\bar{z}_{0}))+\bar{z}_{0}
\widehat{m}_{n}\Big)^{3}}.
$$
Using relations (\ref{ok54}), (\ref{ok55})
together with Lemmas \ref{nolinchihosila}, \ref{birinchihosila}, \ref{ikkinchihosila} and
\ref{ikkinchihosila2} we obtain inequalities (\ref{ok56}).
Lemma \ref{relationmobius2} is therefore completely proved.
 \end{proof}
\section{Proofs of main theorems}
In this section we provide the proofs of our main theorems.
Note that the proofs follow closely  that of \cite{KV1991}.
The flows of proofs are as follow: first we introduce a new renormalized coordinate $z$ on
the  $\mathcal{V}_{n}=I_{0}^{n}\cup I_{0}^{n-1}$
and then express the functions $f_{n}$ and $g_{n}$
in terms of renormalized coordinate.
Finally, using relations between new renormalized coordinate $z$ and relative
coordinates $z_{0},$ $\bar{z}_{0}$ and applying  Lemmas \ref{relationmobius},
\ref{relationmobius2}
we obtain the proof of  Theorem \ref{main1}.
To prove Theorem \ref{coefbog} we utilize  Theorems \ref{modofcon} and \ref{WZ}.

\textbf{Proof of  Theorem \ref{main1}.}
Renormalized coordinate $z$ on $\mathcal{V}_{n}$
is defined by
$$
x=\xi_{0}+z(\xi_{0}-\xi_{q_{n-1}}).
$$
In this new coordinate the points $\xi_{q_{n-1}}$ and $\xi_{0}$ go to
$(-1)$ and $0$ respectively.  Denote by $a_{n}$ and $(-b_{n})$ the new
coordinates of  points $\xi_{q_{n}}$ and $\xi_{q_{n}+q_{n-1}}$ i.e.,
$$
a_{n}=\frac{\xi_{q_{n}}-\xi_{0}}{\xi_{0}-\xi_{q_{n-1}}},\,\,\,\,\,\,\,\,\,\,\,
-b_{n}=\frac{\xi_{q_{n}+q_{n-1}}-\xi_{0}}{\xi_{0}-\xi_{q_{n-1}}}.
$$
It is clear that for any $x\in [\xi_{q_{n-1}},\xi_{0}]$ there exists a unique  relative  coordinate $z_{0}\in [0, 1]$
and  renormalized coordinate $z\in [-1, 0]$ which  correspond to $x.$
Similarly, for any $y\in [\xi_{0}, \xi_{q_{n}}]$ there exists  $\bar{z}_{0}\in [0, 1]$
and  $z\in [0, a_{n}].$
 Using definitions of relative  coordinate and renormalized coordinate
 one can show that
$$
 z_{0}=-z, \,\,\,\,\,\,\,\bar{z}_{0}=1-\frac{z}{a_{n}}.
$$
 By definiteness of  $f_{n}$ and the explicit form of $z_{q_{n}},$ we have
 \begin{equation}\label{7.1}
 f_{n}(z)=\frac{f^{q_{n}}(x)-\xi_{0}}{\xi_{0}-\xi_{q_{n-1}}}=
 \frac{\xi_{q_{n}}-\xi_{0}}{\xi_{0}-\xi_{q_{n-1}}}-\frac{\xi_{q_{n}}-f^{q_{n}}(x)}{\xi_{q_{n}}-\xi_{q_{n}+q_{n-1}}}
 \cdot\frac{\xi_{q_{n}}-\xi_{q_{n}+q_{n-1}}}{\xi_{0}-\xi_{q_{n-1}}}=
 \end{equation}
 $$
 a_{n}-(a_{n}+b_{n})z_{q_{n}}(z_{0})=a_{n}-(a_{n}+b_{n})z_{q_{n}}(-z).
 $$
 Analogously it can be shown, that
 \begin{equation}\label{7.2}
 g_{n}(z)=-b_{n}-(1-b_{n})\hat{z}_{q_{n-1}}(1-\frac{z}{a_{n}}).
\end{equation}
 On the other hand an easy computation shows that the
 functions $\widetilde{F}_{n},$ $\widetilde{C}_{n}$ can be represented
 by M\"{o}bius transformations $\mathcal{M}_{\widetilde{m}_{n}},$ $\mathcal{M}_{\widehat{m}_{n}}$
 respectively as follows
\begin{equation}\label{7.3}
\widetilde{F}_{n}(z)=a_{n}-(a_{n}+b_{n})\mathcal{M}_{\widetilde{m}_n}(-z),\,\,\,\,\,\,\,\,
\widetilde{C}_{n}(z)=-b_{n}-(1-b_{n})\mathcal{M}_{\widehat{m}_n}(1-\frac{z}{a_{n}}).
\end{equation}
Relations (\ref{7.1})-(\ref{7.3}) imply
\begin{equation}\label{7.4}
f_{n}(z)-\widetilde{F}_{n}(z)=-(a_{n}+b_{n})\Big(z_{q_{n}}(-z)-\mathcal{M}_{\widetilde{m}_n}(-z)\Big),
\,\,\,\,\,\,\,\,\,\,z\in[-1,0],
\end{equation}
\begin{equation}\label{7.5}
g_{n}(z)-\widetilde{G}_{n}(z)=-(1-b_{n})\Big(\hat{z}_{q_{n-1}}(1-\frac{z}{a_{n}})-\mathcal{M}_{\widehat{m}_n}(1-\frac{z}{a_{n}})\Big),
\,\,\,\,\,\,\,\,\,\,z\in[0, a_{n}].
\end{equation}
By differentiating these  we obtain
\begin{equation}\label{7.6}
f'_{n}(z)-\widetilde{F}'_{n}(z)=(a_{n}+b_{n})\Big(z'_{q_{n}}(-z)-\mathcal{M}'_{\widetilde{m}_n}(-z)\Big),
\,\,\,\,\,\,\,\,\,\,z\in[-1,0],
\end{equation}
\begin{equation}\label{7.7}
g'_{n}(z)-\widetilde{G}'_{n}(z)=\frac{1-b_{n}}{a_{n}}\Big(\hat{z}'_{q_{n-1}}(1-\frac{z}{a_{n}})-\mathcal{M}'_{\widehat{m}_n}(1-\frac{z}{a_{n}})\Big),
\,\,\,\,\,\,\,\,\,\,z\in[0, a_{n}].
\end{equation}
Using Denjoy's inequality (see \cite{ADM2012}) and properties of dynamical partition one can obtain
\begin{equation}\label{7.8}
a_{n}+b_{n}\leq e^{\nu},\,\,\,\,\,\frac{1-b_{n}}{a_{n}}\leq e^{\nu}\,\,\,\,\, \mathrm{and}\,\,\,\,\, 0<1-b_{n}<1.
\end{equation}
The last relations together with (\ref{7.4})-(\ref{7.7}) and  Lemma \ref{relationmobius} imply
$$
\|f_{n}-\widetilde{F}_{n}\|_{C^{1}([-1,0])}\leq \frac{C}{n^{\gamma}},\,\,\,\,\,\,
\|g_{n}-\widehat{G}_{n}\|_{C^{1}([0, a_{n}])}\leq \frac{C}{n^{\gamma}}
$$
for all $n\geq N_{0}.$ Theorem \ref{main1} is  proved.

\textbf{Proof of  Theorem \ref{coefbog}.}
From Lemma \ref{nolinchihosila}
and the relations (\ref{ok54}), (\ref{ok55}), (\ref{7.4})-(\ref{7.8})
directly follow the inequalities (\ref{Ren100}) of Theorem \ref{coefbog}.
To prove the inequalities  (\ref{Ren2})
 we use Theorem \ref{WZ}. According to that theorem, $f'$ is differentiable, hence
 $z'_{q_{n}}$ and $\hat{z}'_{q_{n-1}}$  are differentiable.
By differentiating (\ref{7.6}), (\ref{7.7})   we obtain
\begin{equation*}\label{8.5}
f''_{n}(z)-\widetilde{F}''_{n}(z)=-(a_{n}+b_{n})\Big(z''_{q_{n}}(-z)-\mathcal{M}''_{\widetilde{m}_n}(-z)\Big),
\,\,\,\,\,\,\,\,\,\,z\in[-1,0]
\end{equation*}
\begin{equation*}\label{8.6}
g''_{n}(z)-\widetilde{G}''_{n}(z)=-\frac{1-b_{n}}{a^{2}_{n}}\Big(\hat{z}''_{q_{n-1}}(1-\frac{z}{a_{n}})-\mathcal{M}''_{\widehat{m}_n}(1-\frac{z}{a_{n}})\Big),
\,\,\,\,\,\,\,\,\,\,z\in[0, a_{n}].
\end{equation*}
These together with relations (\ref{ok54}), (\ref{ok55}), (\ref{7.8}) and
the second assertion of Lemma \ref{relationmobius2}
imply
$$
\|f''_{n}-F''_{n}\|_{C^{0}([-1,0])}\leq \frac{C}{n^{\gamma-1}},\,\,\,\,\,\,\,
\|g''_{n}-G''_{n}\|_{C^{0}([0, a_{n}])}\leq \frac{C}{a_{n}n^{\gamma-1}}
$$
for all $n\geq N_{0}.$ The inequalities (\ref{Ren2}) of Theorem \ref{coefbog} are proved.

Now we prove the inequality (\ref{ren3}).
 To prove this we use from the relation between  $f_{n}$ and $g_{n+1}.$
Since these functions correspond to the mapping $f^{q_{n}}$  in  different
coordinate systems, we have
  \begin{equation*}\label{7.9}
  g_{n+1}(z)=-\frac{1}{a_{n}}f_{n}(-a_{n}z).
  \end{equation*}
  This relation implies the following two equalities
  \begin{equation}\label{7.9}
g'_{n+1}(0)=f'_{n}(0),
\end{equation}
\begin{equation}\label{8.0}
g_{n+1}(a_{n+1})=-\frac{1}{a_{n}}f_{n}(-a_{n}a_{n+1}).
\end{equation}
Note that $f'_{n}(0)$ exists and it is equal to $(f^{q_{n}}(\xi_{0}-0))'$ if  $n$ is even and
$(f^{q_{n}}(\xi_{0}+0))'$ if  $n$ is odd.
Using (\ref{ok52})-(\ref{ok49a}) and (\ref{7.1}), (\ref{7.2}) we rewrite (\ref{7.9}), (\ref{8.0}) as
  \begin{equation}\label{8.1}
\frac{(1-b_{n+1})}{a_{n+1}}\cdot\frac{\exp(\widehat{\Upsilon}_{n+1}(1))}{\widehat{m}_{n+1}}=
\frac{(a_{n}+b_{n})\widetilde{m}_{n}}{\exp(\widetilde{\Upsilon}_{n}(0))},
\end{equation}
\begin{equation}\label{8.2}
-b_{n+1}=-1 +\frac{(a_{n}+b_{n})a_{n+1}\widetilde{m}_{n}}
{(1-a_{n}a_{n+1})\exp(\widetilde{\Upsilon}_{n}(a_{n}a_{n+1}))+a_{n}a_{n+1}\widetilde{m}_{n}}.
\end{equation}
From the last equation  we find $(1-b_{n+1})/a_{n+1}$ and then substituting this expression in
(\ref{8.1}) we obtain
$$
\frac{\exp(\widehat{\Upsilon}_{n+1}(1)+\widetilde{\Upsilon}_{n}(0))}{\widehat{m}_{n+1}}
=(1-a_{n}a_{n+1})\exp(\widetilde{\Upsilon}_{n}(a_{n}a_{n+1}))+a_{n}a_{n+1}\widetilde{m}_{n}.
$$
Using this and (\ref{8.2}) we get
\begin{equation*}\label{8.3}
\frac{a_{n+1}}{c_{n+1}}+\frac{b_{n+1}\exp(\widehat{\Upsilon}_{n+1}(1)+\widetilde{\Upsilon}_{n}(0))}{\widehat{m}_{n+1}}-1=
\end{equation*}
$$
a_{n+1}\Big(\frac{1}{c_{n+1}}-a_{n}\exp(\widetilde{\Upsilon}_{n}(a_{n}a_{n+1}))-b_{n}\widetilde{m}_{n}\Big)+
\exp(\widetilde{\Upsilon}_{n}(a_{n}a_{n+1}))-1.
$$
Utilizing Lemma \ref{nolinchihosila} and relations (\ref{ok54}) and (\ref{ok55})
we can show that
\begin{equation}\label{8.4}
a_{n+1}+b_{n+1}m_{n+1}-c_{n+1}=c_{n+1}a_{n+1}(c_{n}-a_{n}-b_{n}m_{n})+\mathcal{O}\Big(\frac{1}{n^{\gamma}}\Big)
\end{equation}
for all $n\geq N_{0}.$
 Set $r_{n+1}:=a_{n+1}+b_{n+1}m_{n+1}-c_{n+1}$ and $a_{n+2}c_{n+2}:=1.$ Iterating relation (\ref{8.4})
we get
$$
r_{n+1}=r_{1}\prod_{i=2}^{n+1}(-a_{i}c_{i})+\mathcal{O}\Big(\sum_{j=3}^{n+2}\prod_{i=j}^{n+2}\frac{(-a_{i}c_{i})}{(i-2)^{\gamma}}\Big).
$$
  It is clear
$$
\Big|\prod_{i=j}^{n+1}(-a_{i}c_{i})\Big|\leq \max\{c, \frac{1}{c}\}a_{n+1}\cdot\frac{|I^{n}_{0}|}{|I^{j-1}_{0}|}.
$$
From well known  fact $\frac{|I^{n}_{0}|}{|I^{j-1}_{0}|}=\mathcal{O}(\lambda ^{n-j+1})$ (see, for instance \cite{ADM2012}) we obtain
$$
|r_{n+1}|=\mathcal{O}\Big(a_{n+1}\sum_{i=1}^{n}\frac{\lambda^{n-i}}{i^{\gamma}}\Big).
$$
On the other hand
$$
\sum_{i=1}^{n}\frac{\lambda^{n-i}}{i^{\gamma}}=\sum_{i=1}^{[\frac{n}{2}]-1}\frac{\lambda^{n-i}}{i^{\gamma}}+
\sum_{i=[\frac{n}{2}]}^{n}\frac{\lambda^{n-i}}{i^{\gamma}}=\mathcal{O}\Big([\frac{n}{2}]\lambda^{[\frac{n}{2}]}
+\frac{1}{[\frac{n}{2}]^{\gamma}} \Big)=\mathcal{O}\Big(\frac{1}{n^{\gamma}}\Big)
$$
where $[\cdot]$ is an integer part of a given number. Hence
$$
|r_{n+1}|=\mathcal{O}\Big(\frac{a_{n+1}}{n^{\gamma}}\Big).
$$
Theorem \ref{coefbog} is completely proved.\\
\\
\textbf{Afterthought.}
At the end of this work, we would like to give our opinion on the further development of our result.
Since  the rate of  convergence is given  in explicit form, we  believe  that this result
 will have  applications in regularity problem of conjugacy.
 Of course, for $\gamma\in (0, 1/2]$ it is  difficult
 to expect the regularity of  conjugacy.
 Because, in this case the second derivative of circle diffeomorphisms can be
 very "bad" (see \cite{Zyg}). However, in the case  $\gamma>1/2$ the situation gets better  that is, in this case
 due to Theorem \ref{WZorginal} (stated in Section 2),  $f'$
 is absolute continuous on $\mathbb{S}^{1}\setminus\{\xi_{0}\}$ and $f''\in L_{p}(\mathbb{S}^{1})$  for every $p>1.$
  Such diffeomorphisms are known as a class of  Katznelson \& Ornstein (KO class)
 in the theory of circle maps. Katznelson \& Ornstein proved that
 diffeomorphisms from KO class are absolute continuously  conjugated with rigid rotation
 for bounded type of irrational rotation numbers \cite{KO1989.2}.
 It is natural  to expect analogues result
 to Katznelson \& Ornstein for conjugacy between two circle diffeomorphisms with breaks.
 In the case $\gamma>1$, in spite of $f$  belongs to $C^{2}(\mathbb{S}^{1}\setminus\{\xi_{0}\})$,
  the convergence rate is sub-exponential. Therefore it is difficult to expect $C^{1}$-rigidity.
 Because, for $C^{1}$-rigidity, it is very important the exponentiality of convergence rate.
 This point  is known from previous  works mentioned in Section 1 and  explained \emph{e.g.} in \cite{MeloFarie I}.
However, it can be expected the  convergence renormalizations with sub-exponential rate.\\
\\
\textbf{Acknowledgements.} The authors would like to
thank professors A. A. Dzhalilov and K. M. Khanin  for making
several useful suggestions. We also wish to express our thanks to the referee
for providing us with helpful comments.
The paper is supported via the grants
 DIP-2014-034 and FRGS/1/2014/ST06/UKM/01/1.

\end{document}